\documentclass[12pt]{article}
\usepackage{amsthm}
\usepackage{amsmath,amsfonts}
\usepackage{geometry}
\usepackage{amssymb}
\usepackage{kotex}
\usepackage{theoremref}
\usepackage{mathrsfs}
\usepackage{enumitem}

\newtheorem{thm}{Theorem}[section]
\newtheorem{lem}[thm]{Lemma}

\theoremstyle{definition}

\newtheorem{rmk}[thm]{Remark}
\newtheorem{eg}[thm]{Example}

\newcommand{\rn}{\mathbb{R}^n}

\newcommand{\supp}{\mathrm{supp\,}}

\newcommand{\hs}[3]{\dot{H}^{#1}_{#2, #3}}
\newcommand{\ihs}[3]{H^{#1}_{#2, #3}}
\newcommand{\scc}{\mathscr{S}}
\newcommand{\tpd}{\mathscr{S}'}
\newcommand{\ift}{^\vee}
\newcommand{\vp}{\varphi}

\newcommand{\wdh}{\,\widehat{\ }\,}
\newcommand{\rg}{\rangle}
\renewcommand{\lg}{\langle}
\renewcommand{\hat}{\widehat}

\newcommand{\hsf}[4]{\dot{F}^{#1,#4}_{#2,#3}}
\newcommand{\ihsf}[4]{F^{#1,#4}_{#2,#3}}
\newcommand{\hsb}[4]{\dot{B}^{#1,#4}_{#2,#3}}
\newcommand{\ihsb}[4]{B^{#1,#4}_{#2,#3}}
\newcommand{\lpo}[2]{\Delta_j^{#1} #2}
\newcommand{\inhomnorm}[6]{\left\| \left\{2^{j #1}\lpo{#5}{#6} \right\}_{j\in\mathbb{N}_0} \right\|_{L^{#2,#3}(l^{#4})}}
\newcommand{\homnorm}[6]{\left\|  \left\{ 2^{j #1}\lpo{#5}{#6} \right\}_{j\in\mathbb{Z}} \right\|_{L^{#2,#3}(l^{#4})}}
\newcommand{\inhomnormb}[6]{\left\| \left\{2^{j #1}\lpo{#5}{#6} \right\}_{j\in\mathbb{N}_0} \right\|_{l^{#4}(L^{#2,#3})}}
\newcommand{\homnormb}[6]{\left\|  \left \{ 2^{j #1}\lpo{#5}{#6} \right \}_{j\in\mathbb{Z}} \right\|_{l^{#4}(L^{#2,#3})}}
\newcommand{\les}{\lesssim}
\newcommand{\nhra}{\not\hookrightarrow}

\title{Interpolation inequalities in function spaces of Sobolev-Lorentz type}


\author{Jaeseong Byeon \thanks{%
Department of Mathematics, Sogang University, Seoul, 121-742, Korea (jsbyeon@sogang.ac.kr). }
\and Hyunseok Kim \thanks{%
Department of Mathematics, Sogang University, Seoul, 121-742, Korea (kimh@sogang.ac.kr).}
\and Jisu Oh \thanks{%
Department of Mathematics, Sogang University, Seoul, 121-742, Korea (jisuoh@sogang.ac.kr).
The research   was supported by Basic Science Research Program through the National Research Foundation of Korea (NRF) funded by the Ministry of Education (No. NRF-2016R1D1A1B02015245).}
}
\date{}

\begin{document}

\maketitle

\begin{abstract} Interpolation inequalities in    Triebel-Lizorkin-Lorentz spaces and Besov-Lorentz spaces are studied for both inhomogeneous and homogeneous cases. First we establish  interpolation inequalities under quite general assumptions on the parameters of the function spaces. Several results on necessary conditions are also provided. Next, utilizing  the interpolation inequalities together with some embedding results, we    prove   Gagliardo-Nirenberg inequalities for fractional derivatives in  Lorentz spaces, which do hold even  for the limiting case when one  of the parameters is equal to $1$ or $\infty$.

{\it Keywords}: Interpolation inequalities, Lorentz spaces, Triebel-Lizorkin-Lorentz spaces, Besov-Lorentz spaces, Sobolev-Lorentz spaces, Gagliardo-Nirenberg inequalities.

{\it Mathematics Subject Classification (2020)}: 46E35(42B35, 35A23)
\end{abstract}

\section{Introduction}

By an interpolation inequality in quasi-Banach spaces $X_1 , X_2$, and $X$ with $X_1 \cap X_2 \subset X$, we mean   any inequality of the form
$$
\|f\|_X \les \|f\|_{X_1}^{1-\theta} \|f\|_{X_2}^\theta \quad  (f \in X_1 \cap X_2  ),
$$
where   $0< \theta < 1$ is a constant. Here  for two nonnegative quantities $a$ and $b$, we write $a\lesssim b$  if $a \le C b$ for some positive constant $C$.  If $a \les b $ and $b \les a $, we write $a \sim b$.

The celebrated Gagliardo-Nirenberg inequality    \cite{Ga,Niren} is an interpolation inequality in      Sobolev spaces of integral order   on the $n$ dimensional Euclidean space  ${\mathbb R}^n$.  Interpolation inequalities in more general function spaces have been obtained   for fractional Sobolev spaces \cite{BreMir}, for   Triebel-Lizorkin and Besov  spaces \cite{BreMir,Ozawa,Wa1},  for  Fourier-Herz spaces \cite{Chi}, for Soblev-Lorentz spaces \cite{dao,HaYuZh,McC1,Wa2}, and most recently   for Triebel-Lizorkin-Lorentz and Besov-Lorentz  spaces  \cite{WWY}.

For $s \in \mathbb R$, $1 \le p< \infty$, and $1 \le q, r \le \infty$, the Triebel-Lizorkin-Lorentz space $F_{p,q}^{s,r}$ on $\mathbb R^n$ is the generalization of the Triebel-Lizorkin space $F_p^{s,r}$ obtained by replacing the underlining Lebesgue space $L^p$ in the definition of $F_p^{s,r}$ by the Lorentz space $L^{p,q}$.
Sobolev-Lorentz spaces $H_{p,q}^s$ and Besov-Lorentz  spaces $B_{p,q}^{s,r}$ are defined in the same way. The corresponding homogeneous spaces can be also considered  and denoted by $\dot{F}_{p,q}^{s,r}$, $\dot{H}_{p,q}^s$, and $\dot{B}_{p,q}^{s,r}$, respectively.
All these function spaces of Sobolev-Lorentz type will be studied in   more details in the preliminary Section 2.

The main purpose of this paper is to establish general    interpolation inequalities in Triebel-Lizorkin-Lorentz spaces and Besov-Lorentz spaces for both inhomogeneous and homogeneous cases.

First of all, for  Triebel-Lizorkin-Lorentz spaces, we  consider the following interpolation inequalities of the  most general form:
\begin{equation}\label{itp-eq-inhomf}
\|f\|_{\ihsf{s}{p}{q}{r}}\lesssim \|f\|_{\ihsf{s_1}{p_1}{q_1}{r_1}}^{1-\theta}\|f\|_{\ihsf{s_2}{p_2}{q_2}{r_2}}^\theta ,
\end{equation}
\begin{equation}\label{itp-eq-homf}
\|f\|_{\hsf{s}{p}{q}{r}}\lesssim \|f\|_{\hsf{s_1}{p_1}{q_1}{r_1}}^{1-\theta}\|f\|_{\hsf{s_2}{p_2}{q_2}{r_2}}^\theta ,
\end{equation}
where    $s , s_1 , s_2 \in \mathbb{R}$, $1 \le p,p_1,p_2 < \infty$, $ 1 \le q,q_1,q_2,  r,r_1,r_2 \le \infty$, and $0<\theta<1$ are    numbers satisfying some conditions.

To state our results in a concise way, let $s_* , p_* , q_*$, and $r_*$ be   defined by
$$
s_* = (1-\theta )s_1 + \theta s_2 ,\quad \frac{1}{p_*}= \frac{1-\theta}{p_1}+\frac{\theta}{p_2} ,
$$
$$
\frac{1}{q_*}= \frac{1-\theta}{q_1}+\frac{\theta}{q_2} , \quad\mbox{and}\quad \frac{1}{r_*}= \frac{1-\theta}{r_1}+\frac{\theta}{r_2}.
$$
We first show  that    the   condition
\begin{equation}\label{nec-condition-common}
s_* -s \ge \frac{n}{p_*}-\frac{n}{p} \ge 0
\end{equation}
is  necessary and sufficient for the interpolation inequality  (\ref{itp-eq-inhomf}) to hold for $q  \ge  q_* $ and $r  \ge   r_*$.  See Theorems \ref{thm-itp-inhomf} and \ref {thm-necessity-00}  in Section 3 for precise statements.

We are more interested in finding sufficient and necessary conditions for  (\ref{itp-eq-inhomf}) to hold for $q   <   q_*  $ or  $r   < r_*  $. For the case when $r =1 < r_* =\infty$, we  prove  a rather complete result in  Theorems \ref{thm-itp-inhomf} and \ref{thm-necessity-0}:  if either one of the two conditions
\begin{enumerate}
\item[(a)] $s_* =s$, $p_*=p$, $s_1\neq s_2$
\item[(b)] $s_* >  s$, $s_*-s \ge n/p_*-n/p \ge 0$
\end{enumerate}
is satisfied, then (\ref{itp-eq-inhomf})   holds when  $q \ge q_*$ and   $r =1 < r_* =\infty$; conversely, if (\ref{itp-eq-inhomf})   holds when  $q = q_*$,   $r =1 < r_* =\infty$, and $1<p, p_1 , p_2 < \infty$, then  either    (a) or (b) must be  satisfied.
However, for the   case when $q=1< q_* = \infty$, we have only a partial result. Particularly, in Theorem \ref{thm-necessity}, we show that if (\ref{itp-eq-inhomf})   holds when  $q = 1< q_* =\infty$,   $r =1 < r_* =\infty$, and $1<p, p_1 , p_2 < \infty$, then  one of the following conditions is satisfied:
\begin{enumerate}[label = \textup{(\roman*)}]
\item $s_*  =s$, $p_*=p$, $s_1\neq s_2$, $p_1\neq p_2$, $s_2-s_1\neq n/p_2 - n/p_1$.
\item $s_* >  s$, $p_*=p$, $p_1 \neq p_2$.
\item $s_*-s=n/p_* - n/p>0$, $s_2-s_1\neq n/p_2 - n/p_1$.
\item $s_*-s>n/p_*-n/p>0$.
\end{enumerate}
Furthermore, it is shown in Theorem \ref{thm-itp-inhomf} that the conditions (ii), (iii), and (iv) are sufficient for the inequality (\ref{itp-eq-inhomf}) to  hold for   $q = 1< q_* =\infty$ and  $r =1 < r_* =\infty$. But we have been unable to prove sufficiency of (i) yet, which seems to be still open.

We next establish several results on sufficient and necessary conditions for the interpolation inequality (\ref{itp-eq-homf}) in homogeneous Triebel-Lizorkin-Lorentz spaces.  Among other things, we show in Theorem  \ref{thm-itp-homf}   that if the condition (iii) is satisfied and $1< p, p_1 , p_2 < \infty$, then (\ref{itp-eq-homf})  holds for   $q = 1< q_* =\infty$ and  $r =1 < r_* =\infty$, that is, there holds the following interpolation inequality:
\begin{equation}\label{inter-ineq-homo-TLL}
\|f\|_{\dot F^{s,1}_{p,1}}\les \|f\|_{\dot F^{s_1,\infty}_{p_1,\infty}}^{1-\theta}\|f\|_{\dot F^{s_2,\infty}_{p_2,\infty}}^\theta .
\end{equation}

For  inhomogeneous and homogeneous Besov-Lorentz spaces, we   consider the following  interpolation inequalities:
\begin{equation}\label{itp-eq-inhomb}
\|f\|_{\ihsb{s}{p}{q}{r}}\lesssim \|f\|_{\ihsb{s_1}{p_1}{q_1}{r_1}}^{1-\theta}\|f\|_{\ihsb{s_2}{p_2}{q_2}{r_2}}^\theta ,
\end{equation}
\begin{equation}\label{itp-eq-homb}
\|f\|_{\hsb{s}{p}{q}{r}}\lesssim \|f\|_{\hsb{s_1}{p_1}{q_1}{r_1}}^{1-\theta}\|f\|_{\hsb{s_2}{p_2}{q_2}{r_2}}^\theta ,
\end{equation}
where     $s , s_1 , s_2 \in \mathbb{R}$, $1 \le p,p_1,p_2  \le  \infty$, $ 1 \le q,q_1,q_2,  r,r_1,r_2 \le \infty$, and $0<\theta<1$. In Theorems \ref{thm-itp-inhomb} and \ref{thm-necessity-00-b}, we    show that the   condition (\ref{nec-condition-common})
is   necessary and sufficient for the interpolation inequality (\ref{itp-eq-inhomb}) to hold for $q  \ge  q_* $ and $r  \ge   r_*$. It will be also shown in Theorem \ref{thm-itp-inhomb}  that if one of the conditions (ii), (iii), and (iv) is satisfied, then the   interpolation inequality
\[
\|f\|_{\ihsb{s}{p}{1}{1}}\lesssim \|f\|_{\ihsb{s_1}{p_1}{\infty}{\infty}}^{1-\theta}\|f\|_{\ihsb{s_2}{p_2}{\infty}{\infty}}^\theta \]
holds even for the limiting case when one of $ p , p_1 $, and $p_2$ equals to $1$ or $ \infty$. The corresponding result for homogeneous spaces is guaranteed by Theorem \ref{thm-itp-homb}, from which we conclude that if   the condition (iii) is satisfied, then
\begin{equation}\label{inter-ineq-homo-BL}
\|f\|_{\dot B^{s,1}_{p,1}}\les \|f\|_{\dot B^{s_1,\infty}_{p_1,\infty}}^{1-\theta}\|f\|_{\dot B^{s_2,\infty}_{p_2,\infty}}^\theta .
\end{equation}
This result was already proved  by Wang et al. \cite{WWY} for the special    case when $1< p_1 , p_2 \le \infty$.

The second purpose of this paper is to extend         Gagliardo-Nirenberg inequalities to the setting  of Lorentz spaces by making   use of    the interpolation inequalities (\ref{itp-eq-homf}), (\ref{inter-ineq-homo-TLL}),  and (\ref{inter-ineq-homo-BL}) together with some embedding results.

By a standard argument, it can be shown (see Theorem \ref{SFE2-inhom}) that
$$
\|f\|_{\hsf{s}{p}{q}{2}} \sim  \|f\|_{\hs{s}{p}{q}} = \|\Lambda^s f\|_{L^{p,q}}
$$
for $s\in\mathbb{R}$, $1<p<\infty$, and $1\le  q \le \infty$, where $\Lambda^s = (-\Delta)^{s/2}$ is the fractional Laplacian of order $s$. Therefore, from  (\ref{inter-ineq-homo-TLL}), we can deduce that the   generalized       Gagliardo-Nirenberg inequality
\begin{equation}\label{inter-ineq-homo-SL}
\|\Lambda^s f\|_{L^{p,q}}\les \|\Lambda^{s_1} f\|_{L^{p_1,q_1}}^{1-\theta}\|\Lambda^{s_2} f\|_{L^{p_2,\infty}}^\theta
\end{equation}
holds for $1 \le q , q_1\le \infty$, if $s, s_1 , s_2 \in \mathbb R$, $1< p, p_1 , p_2 < \infty$, and $0<\theta < 1$ satisfy  the condition (iii).
Particularly, taking $s=s_1 =0$ in (\ref{inter-ineq-homo-SL}), we have
\begin{equation}\label{inter-ineq-homo-SL-00}
\|f\|_{L^{p,q}}\les \|f\|_{L^{p_1,q_1}}^{1-\theta}\|\Lambda^{s_2} f\|_{L^{p_2,\infty}}^\theta ,
\end{equation}
provided that  $s_2 > 0$, $1 <  p  , p_1 , p_2 <  \infty$,  and $0<\theta<1$ satisfy
\begin{equation}\label{cond-GGNI}
\frac{1}{p}=\frac{1-\theta}{p_1}+\theta\left(\frac{1}{p_2}-\frac{s_2}{n}\right)
\quad\text{and}\quad\frac{1}{p_1} \neq  \frac{1}{p_2}-\frac{s_2}{n}.
\end{equation}
Furthermore, it will be shown in Theorem  \ref{cor-BL1}  that (\ref{inter-ineq-homo-SL-00}) holds even for the limiting case when $p=q=\infty$ or   $1 \le p_1 =q_1 \le \infty$,  if $s_2 > 0$, $1  <  p  \le \infty$, $1 \le p_1 \le \infty$, $1< p_2 <  \infty$,  and $0<\theta<1$ satisfy (\ref{cond-GGNI}). Several useful inequalities can be  derived from (\ref{inter-ineq-homo-SL-00}); for instance, if $1 \le p_1 <p < \infty$, $1<p_2 < \infty$, and $s_2 =n/p_2 $, then
\[
\|f\|_{L^{p,1}} \lesssim      \|f\|_{L^{p_1, q_1}}^{p_1 /p}\|\Lambda^{n/p_2} f\|_{L^{p_2 ,\infty}}^{1-p_1/p}  ,
\]
where $q_1 =\infty$ if $p_1 >1$ and $q_1 =1$ if $p_1 =1$. This inequality holds even when $p_1 =1$ and $q_1=\infty$, by   Theorem  \ref{cor2}. More inequalities are provided in the examples following Theorems  \ref{cor-BL1} and  \ref{cor2}, which   generalize  recent results in \cite{dao,McC1,WWY} to some limiting cases.

The rest of the paper is organized as follows. In Section 2, we provide some preliminary results for  Triebel-Lizorkin-Lorentz spaces and Besov-Lorentz spaces. In Section 3, we state and prove our main results on sufficiency and necessity for interpolation inequalities in  Triebel-Lizorkin-Lorentz spaces.
Analogous results for Besov-Lorentz spaces are established in Section 4. Section 5 is devoted to deriving generalized Gagliardo-Nirenberg inequalities in Lorentz spaces from the interpolation inequalities  in Sections 3 and 4. Finally, proofs of some results in the previous sections are provided in Appendix.

\section{Preliminaries}
For two quasi-normed spaces $X$ and $Y$,  we say that $X$ is continuously embedded into $Y$ and write $X\hookrightarrow Y$ if   $X\subset Y$ and
$\|f\|_{Y} \les   \|f\|_{X}$ for all $f \in X$.
If $X$ is not continuously embedded into $Y$, we write $X\nhra Y$.
Furthermore, $X=Y$ always means that $X\hookrightarrow Y$ and $Y\hookrightarrow X$.

In this preliminary section, we define Lorentz spaces, Triebel-Lizorkin-Lorentz spaces and Besov-Lorentz spaces. Basic interpolation  and embedding results are then  introduced, with proofs of some results being postponed later in Appendix.

\subsection{Lorentz spaces}

For $1\le p < \infty$, $1\le q \le \infty$ or $p=q=\infty$, let   $L^{p,q}$  denote  the Lorentz space of  measurable functions $f$ on ${\mathbb R}^n$, which is a quasi-Banach space equipped with the quasi-norm
\begin{equation*}\label{lorentz-quasi-norm}
\|f \|_{L^{p,q}}=
\begin{cases}
 \displaystyle \left(p \int _0 ^\infty [\alpha \mu_f(\alpha)^{1/p}]^q\frac{d\alpha}{\alpha} \right )^{1/q}&\text{if}\,\, p,q<\infty\\
 	 \quad \displaystyle \sup_{\alpha>0} \alpha \mu_f(\alpha)^{1/p} &\text{if}\,\, p<\infty,\,q=\infty \\
	\inf \{\alpha>0: \mu_f(\alpha) =0\} &\text{if}\,\, p=q=\infty ,
 \end{cases}
\end{equation*}
where  $\mu_f$ denotes   the distribution function of $f$: $\mu_f(\alpha)= |\{x\in\mathbb{R}^n:|f(x)|>\alpha\}|$.

For notational convenience, we  define $L^{\infty,q}=\{0\}$ for $1 \le q<\infty$.
Note then that  $L^{p,q_1}\hookrightarrow L^{p,q_2}$ if and only if $1\le q_1\le q_2\le\infty$. Moreover, if $1 \le p=q \le \infty$, then $L^{p,q}=L^{p}$.
It is also well-known that if $1<p<\infty$, then the quasi-norm $\|\cdot\|_{L^{p,q}}$  is equivalent to a  norm  (see, e.g., \cite[Lemma 4.4.5]{Bennett}).
The following H\"older inequality in Lorentz spaces is due to O'Neil \cite[Theorem 3.4]{oneil}.

\begin{lem}\label{Holder-Lorentz}
Let $1\le p,p_1,p_2,q,q_1,q_2 \le \infty$ satisfy
$$
\frac{1}{p}=\frac{1}{p_1}+\frac{1}{p_2}\quad\text{and}\quad \frac{1}{q}\le \frac{1}{q_1}+\frac{1}{q_2}.
$$
Then for all $f\in L^{p_1,q_1}$ and $g\in L^{p_2,q_2}$,
$$
\|fg\|_{L^{p,q}} \les \|f\|_{L^{p_1,q_1}} \|g\|_{L^{p_2,q_2}}.
$$
\end{lem}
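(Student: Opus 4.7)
The plan is to work throughout with the equivalent rearrangement representation
$$\|f\|_{L^{p,q}} \sim \bigl\|t^{1/p} f^*(t)\bigr\|_{L^q((0,\infty);\, dt/t)},$$
where $f^*(t) = \inf\{\alpha>0 : \mu_f(\alpha) \le t\}$ is the decreasing rearrangement of $f$. For $p,q<\infty$ this follows by the change of variable $\alpha \leftrightarrow f^*(t)$ in the defining integral, and for $q=\infty$ it is immediate from the definition of $f^*$.

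First I would dispose of the degenerate cases. If some $p_i=\infty$ with $q_i<\infty$, then $L^{p_i,q_i}=\{0\}$ by the paper's convention and the claim is vacuous; if $p_1=q_1=\infty$ (so necessarily $p=p_2$), the distributional estimate $\mu_{fg}(\alpha)\le \mu_g(\alpha/\|f\|_{L^\infty})$ yields $\|fg\|_{L^{p_2,q_2}}\le \|f\|_{L^\infty}\|g\|_{L^{p_2,q_2}}$, and the embedding $L^{p,q_2}\hookrightarrow L^{p,q}$ (valid precisely because $1/q_1=0$ forces $q_2\le q$) closes this case. After these reductions I may assume $p,p_1,p_2<\infty$.

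Next I would reduce to the equality case in the secondary index. Defining $q_0\in[1,\infty]$ by $1/q_0=1/q_1+1/q_2$, the hypothesis gives $q_0\le q$, so by the standard Lorentz inclusion $L^{p,q_0}\hookrightarrow L^{p,q}$ it suffices to prove the inequality with $q$ replaced by $q_0$. For the core estimate I would invoke the pointwise rearrangement inequality $(fg)^*(t) \le f^*(t/2)\, g^*(t/2)$, which is the key classical fact and follows from $\mu_{fg}(\alpha_1\alpha_2)\le \mu_f(\alpha_1)+\mu_g(\alpha_2)$. Substituting into the rearrangement representation, splitting $t^{1/p}=t^{1/p_1}\cdot t^{1/p_2}$, and applying the classical H\"older inequality on $((0,\infty),\, dt/t)$ with exponents $q_1,q_2,q_0$ gives
$$\|fg\|_{L^{p,q_0}} \les \bigl\|t^{1/p_1} f^*(t/2)\bigr\|_{L^{q_1}(dt/t)} \bigl\|t^{1/p_2} g^*(t/2)\bigr\|_{L^{q_2}(dt/t)}.$$
A change of variable $s=t/2$ (which leaves $dt/t$ invariant up to constants) identifies each factor with $\|f\|_{L^{p_1,q_1}}$ and $\|g\|_{L^{p_2,q_2}}$ respectively, finishing the argument.

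The main obstacle will be essentially bookkeeping: the only genuinely nontrivial input is the rearrangement inequality $(fg)^*(t)\le f^*(t/2)g^*(t/2)$, while the H\"older step in $L^q((0,\infty);\,dt/t)$ is routine provided one carefully interprets the factors when some $q_i=\infty$ (in which case $L^{q_i}(dt/t)$ is read as the essential supremum, so nothing breaks). The various embedding reductions above ensure that all the awkward endpoint configurations are handled before the main rearrangement computation is carried out.
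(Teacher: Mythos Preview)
Your argument is correct and is essentially the standard proof (indeed, O'Neil's original argument): the key input is the subadditivity estimate $\mu_{fg}(\alpha_1\alpha_2)\le\mu_f(\alpha_1)+\mu_g(\alpha_2)$, from which $(fg)^*(t)\le f^*(t/2)\,g^*(t/2)$ follows, and the rest is H\"older on $((0,\infty),dt/t)$ together with the inclusion $L^{p,q_0}\hookrightarrow L^{p,q}$. The paper itself does not prove this lemma at all; it simply attributes it to O'Neil \cite[Theorem~3.4]{oneil} and uses it as a black box, so there is no alternative approach to compare against.
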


Let $\Gamma$ be an index set (typically, $\Gamma$ is the set $\mathbb{Z}$ of all integers or the set $\mathbb{N}_0$ of all nonnegative integers). For $1 \le r\le \infty$, we denote by $l^r=l^r(\Gamma)$ the Banach space of all complex-valued sequences $ \{a_j\}  =\{a_j\}_{j\in\Gamma}$ such that
\begin{equation*}
\|\{a_j\}\|_{l^r}=
\begin{cases}
\displaystyle \left(\sum_{j\in\Gamma} |a_j|^r\right)^{1/r} &\quad\text{if}\,\,r<\infty \\
\,\, \displaystyle \sup_{j\in\Gamma} |a_j|  &\quad\text{if}\,\,r=\infty
\end{cases}
\end{equation*}
is finite.
Note that $\| \{a_j\} \|_{l^{r_2}} \le \| \{a_j\} \|_{l^{r_1}}$ for $1\le r_1 \le r_2 \le \infty$.
Furthermore, if $ 1\le r,r_1,r_2 \le \infty$  satisfy $1/r \le 1/r_1 + 1/r_2$,  then
$\| \{ a_j  b_j  \} \| _{l^r} \le \| \{a_j  \}\| _{l^{r_1}}\| \{ b_j \} \| _{l^{r_2}}.$

For $1 \le p,q,r\le \infty$, let $L^{p,q}(l^r)=L^{p,q}(l^r(\Gamma))$ be the set of all sequences $\{f_j\}_{j\in\Gamma}$ of measurable functions on $\rn$ such that
\begin{equation*}
\|\{f_j\}_{j\in\Gamma}\|_{L^{p,q}( l^r)} =
  \left\| \| \{ f_j \}_{j\in\Gamma} \|_{l^r}    \right \|_{L^{p,q}} < \infty .
\end{equation*}
Similarly,
$l^r(L^{p,q})=l^r(\Gamma;L^{p,q})$ is the set of all sequences $\{f_j\}_{j\in\Gamma}$ of measurable functions on $\rn$ such that
\begin{equation*}
\|\{f_j\}_{j\in\Gamma}\|_{l^r(L^{p,q})} =
  \left\| \{\| f_j \|_{ L^{p,q}}\}_{j\in\Gamma}    \right \|_{l^r} < \infty .
\end{equation*}
Then $L^{p,q}(l^r)$ and $l^r(L^{p,q})$ are quasi-Banach spaces equipped with the quasi-norms above. Moreover,   each of the embeddings
\begin{equation}\label{eq-ebd}
L^{p,q_1}(l^{r_1})\hookrightarrow L^{p,q_2}(l^{r_2}) \quad\text{and}\quad l^{r_1}(L^{p,q_1}) \hookrightarrow l^{r_2}(L^{p,q_2})
\end{equation}
holds if and only if $1\le q_1\le q_2\le \infty$ and $1\le r_1\le r_2\le \infty$.
Note also that
$$
\| \{|f_j| ^\alpha \} \|_{L^{p/\alpha,q/\alpha}(l^{r/\alpha})} =  \| \{ f_j \}\|^\alpha _{L^{p ,q }(l^{r })}
$$
whenever $0 < \alpha \le \min(p,q,r)$.
We write $L^p(l^r)=L^{p,p}(l^r)$ and $l^r(L^{p,p})=l^r(L^p)$.
The following interpolation result is well-known (see, for instance, \cite[Theorem 5.3.1]{Bergh}).

\begin{lem}\label{interpolation-Lp}
Let $1 \le p, p_1 ,  p_2 \le \infty$ and $0<\theta<1$ satisfy
$$
p_1 \neq p_2 \quad \text{and}\quad \frac{1}{p}=\frac{1-\theta}{p_1}+\frac{\theta}{p_2}.
$$
Then for $1\le q,q_1,q_2,r\le\infty$ with $q_i=\infty$ when $p_i=\infty$ for $i=1,2$,
$$
(L^{p_1,q_1}(l^r),L^{p_2,q_2}(l^r))_{\theta,q}=L^{p,q}(l^r).
$$
\end{lem}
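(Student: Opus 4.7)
The plan is to reduce the vector-valued identity to its scalar counterpart, namely Theorem 5.3.1 of Bergh and L\"ofstr\"om, by establishing the pointwise $K$-functional identity
$$K\bigl(t, f; L^{p_1,q_1}(l^r), L^{p_2,q_2}(l^r)\bigr) = K\bigl(t, g; L^{p_1,q_1}, L^{p_2,q_2}\bigr) \quad \text{for every } t > 0,$$
where $g(x) := \|\{f_j(x)\}_{j \in \Gamma}\|_{l^r}$ is the scalar envelope of $f = \{f_j\}_{j \in \Gamma}$. Once this identity is in hand, the $K$-method definition of real interpolation combined with the scalar theorem yields
$$\|f\|_{(L^{p_1,q_1}(l^r), L^{p_2,q_2}(l^r))_{\theta,q}} = \|g\|_{(L^{p_1,q_1}, L^{p_2,q_2})_{\theta,q}} \sim \|g\|_{L^{p,q}} = \|f\|_{L^{p,q}(l^r)},$$
which is precisely the claim. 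No separate edge-case analysis for $p_i \in \{1, \infty\}$ is required, because the scalar theorem already covers the full range $1 \le p_i \le \infty$ under the hypothesis $q_i = \infty$ when $p_i = \infty$.

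For the inequality $K_{\textup{vec}} \le K_{\textup{sc}}$, I would take any nonnegative scalar decomposition $g = \phi + \psi$ and define the coherent vector splitting $h_j(x) := f_j(x)\,\phi(x)/g(x)$ and $k_j(x) := f_j(x)\,\psi(x)/g(x)$, interpreting the quotients as zero on $\{g = 0\}$ (where all $f_j$ also vanish). Then $f_j = h_j + k_j$ pointwise, $\|\{h_j(x)\}\|_{l^r} = \phi(x)$, and $\|\{k_j(x)\}\|_{l^r} = \psi(x)$, so $\|h\|_{L^{p_1,q_1}(l^r)} = \|\phi\|_{L^{p_1,q_1}}$ and $\|k\|_{L^{p_2,q_2}(l^r)} = \|\psi\|_{L^{p_2,q_2}}$; taking the infimum over scalar splittings gives the bound. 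For the reverse $K_{\textup{sc}} \le K_{\textup{vec}}$, starting from any vector splitting $f = h + k$, I would set $\phi_0 := \|\{h_j(\cdot)\}\|_{l^r}$, $\psi_0 := \|\{k_j(\cdot)\}\|_{l^r}$, so that the pointwise $l^r$ triangle inequality gives $g \le \phi_0 + \psi_0$. Defining $\phi := g\,\phi_0/(\phi_0 + \psi_0)$ and $\psi := g\,\psi_0/(\phi_0 + \psi_0)$ (zero where the denominator vanishes) produces a nonnegative scalar splitting $g = \phi + \psi$ with $\phi \le \phi_0$ and $\psi \le \psi_0$ pointwise. Monotonicity of the Lorentz quasi-norm under pointwise absolute value, which is immediate from the monotonicity of decreasing rearrangements, then yields $\|\phi\|_{L^{p_1,q_1}} \le \|\phi_0\|_{L^{p_1,q_1}} = \|h\|_{L^{p_1,q_1}(l^r)}$ and analogously for $\psi$.

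The main obstacle is the $K$-functional identity itself, more specifically the coherent coupling between the scalar envelope $g$ and its vector constituents via the ratios $\phi/g$ and $\phi_0/(\phi_0 + \psi_0)$. Although both directions are conceptually straightforward, some care is required to handle the exceptional null sets $\{g = 0\}$ and $\{\phi_0 + \psi_0 = 0\}$ and to verify measurability of the constructed decompositions. The two structural facts that drive the argument, namely the pointwise $l^r$-triangle inequality and the pointwise monotonicity of Lorentz quasi-norms, are standard, so the heart of the proof is really the construction of the envelope-based decomposition; once this is carried out, everything else reduces to invoking the cited scalar result.
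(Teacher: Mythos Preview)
The paper does not give its own proof of this lemma; it simply cites it as a well-known result, referring to \cite[Theorem 5.3.1]{Bergh}. Your argument is correct and is in fact the standard way one establishes such vector-valued interpolation identities: because $\|f\|_{L^{p,q}(l^r)}$ depends on $f=\{f_j\}$ only through the scalar envelope $g(x)=\|\{f_j(x)\}\|_{l^r}$, one has the exact $K$-functional identity you describe, and the result reduces to the scalar Lorentz interpolation theorem. Your two constructions (the proportional splitting $h_j=f_j\phi/g$ for one inequality, and the renormalised pair $\phi=g\phi_0/(\phi_0+\psi_0)$, $\psi=g\psi_0/(\phi_0+\psi_0)$ for the other) are exactly right; the restriction to nonnegative scalar decompositions of $g$ is harmless since for any decomposition $g=\phi+\psi$ one can pass to $\tilde\phi=\min(g,|\phi|)$, $\tilde\psi=g-\tilde\phi$ with $0\le\tilde\phi\le|\phi|$ and $0\le\tilde\psi\le|\psi|$. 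The null-set and measurability issues you flag are routine, and your observation that the endpoint cases $p_i\in\{1,\infty\}$ require no separate treatment is correct given the hypothesis $q_i=\infty$ when $p_i=\infty$.
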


\subsection{Triebel-Lizorkin-Lorentz spaces}

We denote the Schwartz class on ${\mathbb R}^n$ by $\scc$ and its topological dual space by $\scc'$.

Let $\psi=\{\psi_j\}_{j\in\mathbb{N}_0}$ be a  sequence  in $C_c^{\infty}(\mathbb{R}^n)$ such that
\begin{equation}\label{cond1-inhom}
\supp \psi_0 \subset \{|\xi|\le2\},\quad \supp \psi_j\subset \{2^{j-1}\le |\xi|\le 2^{j+1}\}\;\;\text{for} \;j\in\mathbb{N},
\end{equation}
\begin{equation}\label{cond2-inhom}
\sup_{j\in\mathbb{N}_0} \sup_{\xi\in\mathbb{R}^n} 2^{j|\alpha|}|D^\alpha \psi_j(\xi)|<\infty\quad\text{for every multi-index} \,\, \alpha ,
\end{equation}
and
\begin{equation}\label{cond3-inhom}
\sum_{j=0}^\infty \psi_j(\xi)=1 \quad\mbox{for every}\; \xi\in\mathbb{R}^n.
\end{equation}
Associated with $\psi=\{\psi_j\}_{j\in\mathbb{N}_0}$, let $\{\Delta_j^\psi\}_{j\in\mathbb{N}_0}$ be  the sequence of Littlewood-Paley operators defined via the Fourier transform by
$$\Delta^\psi_jf=\left(\psi_j \hat f\,\right)\ift =\psi_j\ift \ast f\quad \text{for $f\in\scc'$.}$$
For $s\in\mathbb{R}$, $1\le p <\infty$, and $1\le  q,r \le \infty$,
we denote by $\ihsf{s}{p}{q}{r}$ the space of all $f \in \scc'$ such that
\begin{equation*}
\|f \|_{\ihsf{s}{p}{q}{r}}  = \inhomnorm{s}{p}{q}{r}{\psi}{f} < \infty .
\end{equation*}
Then  $\ihsf{s}{p}{q}{r}$  is a quasi-Banach space and will be called a Triebel-Lizorkin-Lorentz space.
If $1 \le p=q< \infty$, then $\ihsf{s}{p}{q}{r}$ coincides with the more familiar  Triebel-Lizorkin space $F_{p}^{s,r}$, which has been extensively studied by Triebel in \cite{Tri1,Tri4,Tri2,Tri3}.

To define homogeneous spaces,   let $\scc_0$ be a closed subspace of $\scc$ defined by
$$
\scc_0 = \{ \psi \in \scc \, :\, D^\alpha \hat \psi(0) =0\,\,\,\text{for all multi-indices}\,\, \alpha \}.
$$
The dual of $\scc_0$ is denoted by $\scc_0'$.
The following is taken from \cite[Proposition 2.4]{Tri3}.

\begin{lem}\label{uniqueness}
For any $f\in \scc_0'$, there exists $F\in\scc'$ such that
$\lg F,\psi \rg = \lg f , \psi \rg$  for all $\psi\in\scc_0$.
Moreover, if there is another $G\in\scc'$ such that $\lg G, \psi \rg =\lg f, \psi \rg$ for all $\psi\in\scc_0$, then $F-G$ is a polynomial.
\end{lem}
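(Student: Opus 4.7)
The plan is to establish the two assertions separately: the existence of $F$ follows from a Hahn--Banach extension argument, while the uniqueness modulo polynomials is a Fourier-analytic consequence of the structure theorem for distributions supported at the origin.

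For existence, I would first observe that $\scc_0$ is a closed linear subspace of the locally convex space $\scc$, being the intersection of the kernels of the continuous linear functionals $\psi \mapsto D^\alpha \hat\psi(0)$ over all multi-indices $\alpha$. The given element $f \in \scc_0'$ is, by definition, a continuous linear functional on this subspace, so the Hahn--Banach theorem for locally convex topological vector spaces produces a continuous extension $F \in \scc'$ with $\lg F,\psi\rg = \lg f,\psi\rg$ for every $\psi \in \scc_0$. This is the desired tempered distribution, and no further structural input is required at this stage.

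For the uniqueness clause, set $H := F - G \in \scc'$, so that $\lg H,\psi\rg = 0$ for every $\psi \in \scc_0$. Passing to the Fourier transform via the duality identity $\lg \hat H, \hat\psi\rg = \lg H, \psi\rg$ (up to the usual normalization), I would use the fact that the Fourier transform is a topological automorphism of $\scc$ together with the defining condition for $\scc_0$ to conclude that the image $\{\hat\psi : \psi \in \scc_0\}$ is exactly the set of Schwartz functions vanishing to infinite order at the origin. In particular, every test function $\phi \in C_c^\infty(\rn \setminus \{0\})$ lies in this image, which forces $\supp \hat H \subset \{0\}$.

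With that support statement in hand, I would invoke the classical structure theorem for tempered distributions supported at a single point to write $\hat H = \sum_{|\alpha| \le N} c_\alpha D^\alpha \delta_0$ for some nonnegative integer $N$ and some constants $c_\alpha$. Inverting the Fourier transform then yields $H = F - G = \sum_{|\alpha| \le N} c_\alpha'\, x^\alpha$, a polynomial, as required. The main technical point is the identification of the Fourier image of $\scc_0$ with the Schwartz functions that vanish to infinite order at the origin; once that is secured, the support-at-the-origin reduction together with the standard structure theorem delivers the conclusion essentially for free.
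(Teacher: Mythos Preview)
Your argument is correct and is the standard route: Hahn--Banach on the closed subspace $\scc_0\subset\scc$ for existence, then for uniqueness the observation that $\widehat{F-G}$ annihilates every $\phi\in C_c^\infty(\rn\setminus\{0\})$, hence is supported at the origin, hence is a finite combination of derivatives of $\delta_0$, so $F-G$ is a polynomial. The paper does not supply its own proof of this lemma; it simply cites \cite[Proposition 2.4]{Tri3}, whose proof is essentially the one you have written.
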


Let $\vp=\{\vp_j\}_{j\in\mathbb{Z}}$ be a sequence of functions in $C_c^\infty(\mathbb{R}^n)$ such that
\begin{equation}\label{cond1-hom}
\supp \vp_j\subset \{2^{j-1}\le |\xi|\le 2^{j+1}\}\quad\text{for $j\in\mathbb{Z}$},
\end{equation}
\begin{equation}\label{cond2-hom}
\sup_{j\in\mathbb{Z}} \sup_{\xi\in\mathbb{R}^n} 2^{j|\alpha|}|D^\alpha \vp_j(\xi)|<\infty\quad\text{for every multi-index $\alpha$},
\end{equation}
and
\begin{equation}\label{cond3-hom}
\sum_{j\in\mathbb{Z}} \vp_j(\xi)=1 \quad \mbox{for every}\,\, \xi\in\mathbb{R}^n\backslash\{0\}.
\end{equation}
As in the inhomogeneous case, let $\{\Delta_j^\vp\}_{j\in\mathbb{Z}}$ be the sequence of Littlewood-Paley operators associated with $\vp =\{\vp_j\}_{j\in\mathbb{Z}}$:
$$
\Delta_j^\vp f = \left(\vp_j \hat f \right)\ift =\vp_j\ift \ast f \quad \text{for $f\in\scc'$.}
$$
Note that $\lpo{\vp}{P}=0$ for every polynomial $P$.
Therefore, in view of  Lemma \ref{uniqueness}, for each $f\in\scc_0'$, we can define $\Delta_j^\vp f $ by
$$\Delta_j^\vp f=\Delta_j^\vp F,$$
where $F\in\scc'$ is any extension of $f$ to $\scc$. For $s\in \mathbb{R}$, $1 \le p<\infty$, and $1\le q,r \le \infty$, the homogeneous Triebel-Lizorkin-Lorentz space  $\hsf{s}{p}{q}{r}$ is defined as the space of all  $f \in \scc'_0$ such that
\begin{equation*}
\|f \|_{\hsf{s}{p}{q}{r}}  = \homnorm{s}{p}{q}{r}{\vp}{f} < \infty .
\end{equation*}
Then $\hsf{s}{p}{q}{r}$ is a quasi-Banach space. 
If $1 \le p=q<\infty$, $\hsf{s}{p}{q}{r}$ coincides with the homogeneous Triebel-Lizorkin space $\dot F^{s,r}_{p}$.

The following interpolation result was proved in   \cite[Theorem 2.4.2/1]{Tri2} for inhomogeneous Triebel-Lizorkin-Lorentz spaces $\ihsf{s}{p}{q}{r}$ with  $1<r < \infty$.

\begin{thm}\label{lem-itp-inhomf}
Let $1<p,p_1,p_2<\infty$ and $0<\theta<1$ satisfy
$$
p_1 \neq p_2 \quad \text{and}\quad \frac{1}{p}=\frac{1-\theta}{p_1}+\frac{\theta}{p_2}.
$$
Then for $s\in\mathbb{R}$ and  $1 \le q, r \le \infty$,
$$
( F ^{s,r}_{p_1},  F^{s,r}_{p_2})_{\theta,q} \hookrightarrow \ihsf{s}{p}{q}{r} \quad\mbox{and}\quad (\dot F ^{s,r}_{p_1}, \dot F^{s,r}_{p_2})_{\theta,q} \hookrightarrow \hsf{s}{p}{q}{r}.
$$
In addition, if $1<r \le \infty$,  then
$$
( F ^{s,r}_{p_1},  F^{s,r}_{p_2})_{\theta,q} = \ihsf{s}{p}{q}{r} \quad\mbox{and}\quad (\dot F ^{s,r}_{p_1}, \dot F^{s,r}_{p_2})_{\theta,q} = \hsf{s}{p}{q}{r}.
$$
\end{thm}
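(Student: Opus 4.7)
The plan is to realize the Triebel-Lizorkin-Lorentz spaces as retracts of the vector-valued Lorentz spaces $L^{p,q}(l^r)$, so that Lemma \ref{interpolation-Lp} together with the abstract fact that real interpolation commutes with retractions will deliver both assertions at once.

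First I would introduce the coretraction $Jf = \{2^{js}\lpo{\psi}{f}\}_{j\in\mathbb{N}_0}$ in the inhomogeneous case (and its $j\in\mathbb{Z}$ analogue in the homogeneous case). By the very definition of the spaces in Section 2.2, $J$ is a bounded linear map $F^{s,r}_{p_i}\to L^{p_i}(l^r)$ for $i=1,2$ and an isomorphism onto its image, and similarly an isomorphism $\ihsf{s}{p}{q}{r}\to L^{p,q}(l^r)$ onto its image. Applying Lemma \ref{interpolation-Lp} with the hypothesis $p_1\ne p_2$ gives $(L^{p_1}(l^r), L^{p_2}(l^r))_{\theta,q}=L^{p,q}(l^r)$. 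Since $J$ is bounded between the endpoint pairs, real interpolation extends it to a bounded map $(F^{s,r}_{p_1}, F^{s,r}_{p_2})_{\theta,q}\to L^{p,q}(l^r)$, and $\|Jf\|_{L^{p,q}(l^r)}$ is by definition an equivalent quasi-norm on $\ihsf{s}{p}{q}{r}$. This yields the first embedding. The homogeneous case is handled identically on $\scc_0'$ once the Littlewood-Paley pieces are interpreted modulo polynomials via Lemma \ref{uniqueness}.

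For the reverse direction when $1<r\le\infty$, I would construct a retraction $P\{g_j\}=\sum_j 2^{-js}\tilde\Delta_j g_j$, where $\tilde\Delta_j$ is a Littlewood-Paley-type operator whose Fourier symbol is supported in a slightly enlarged dyadic annulus and equals $1$ on $\supp\psi_j$. Pointwise $|\tilde\Delta_j g_j|$ is dominated by $Mg_j$, the Hardy-Littlewood maximal function of $g_j$, so boundedness of $P: L^{p_i}(l^r)\to F^{s,r}_{p_i}$ reduces to the Fefferman-Stein vector-valued maximal inequality, which is valid precisely for $1<p<\infty$ and $1<r\le\infty$. The identities $\tilde\psi_j\psi_j=\psi_j$ and $\sum_j \psi_j=1$ give $P\circ J=\mathrm{id}$, and the abstract retraction theorem for real interpolation then furnishes the reverse embedding $\ihsf{s}{p}{q}{r}\hookrightarrow (F^{s,r}_{p_1}, F^{s,r}_{p_2})_{\theta,q}$, and similarly for the homogeneous spaces.

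The principal obstacle, and the reason equality is not claimed when $r=1$, is the Fefferman-Stein inequality, which breaks down at that endpoint; without a bounded retraction there is no obvious route to invert the embedding. The embedding direction, by contrast, requires only the interpolation of the pair $(L^{p_1}(l^r), L^{p_2}(l^r))$, which continues to hold for $1\le r\le\infty$ by Lemma \ref{interpolation-Lp} and so survives the limit $r=1$.
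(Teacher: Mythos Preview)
Your proposal is correct and follows essentially the same approach as the paper: both reduce the question to Lemma~\ref{interpolation-Lp} for $L^{p,q}(l^r)$ via the Littlewood--Paley coretraction $f\mapsto\{2^{js}\Delta_j f\}$, and for the reverse embedding when $1<r\le\infty$ both rely on the Fefferman--Stein vector-valued maximal inequality to control a reconstruction operator of the form $\{g_j\}\mapsto\sum_j 2^{-js}\tilde\Delta_j g_j$. The only cosmetic difference is that you invoke the abstract retract theorem while the paper works directly with $K$-functionals, but the content is identical.
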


A proof of Theorem \ref{lem-itp-inhomf} will be provided later   in Appendix.

\subsection{Besov-Lorentz spaces}
Let $s\in\mathbb{R}$ and $1\le p,q,r\le \infty$. If $\psi=\{\psi_j\}_{j\in\mathbb{N}_0}$ is a  sequence  in $C_c^{\infty}(\mathbb{R}^n)$ that satisfies \eqref{cond1-inhom},  \eqref{cond2-inhom}, and  \eqref{cond3-inhom}, we define the Besov-Lorentz space $\ihsb{s}{p}{q}{r}$ by  the space of all $f \in \scc'$ such that
\begin{equation*}
\|f \|_{\ihsb{s}{p}{q}{r}}  = \inhomnormb{s}{p}{q}{r}{\psi}{f} < \infty.
\end{equation*}
The homogeneous Besov-Lorentz space $\hsb{s}{p}{q}{r}$ is defined by the space of all $f\in \scc_0'$ such that
\begin{equation*}
\|f \|_{\hsb{s}{p}{q}{r}}  = \homnormb{s}{p}{q}{r}{\vp}{f} < \infty ,
\end{equation*}
where $\vp=\{\vp_j\}_{j\in\mathbb{Z}}$ is a sequence of functions in $C_c^\infty(\mathbb{R}^n)$ satisfying \eqref{cond1-hom},  \eqref{cond2-hom}, and  \eqref{cond3-hom}. For $1 \le p=q \le \infty$, we write $B^{s,r}_p = \ihsb{s}{p}{p}{r}$ and $\dot B^{s,r}_p = \hsb{s}{p}{p}{r}$.

The following  embedding result is easily deduced  from the  definitions.

\begin{lem}\label{thm-ebd-FB and BF}
Let $s \in \mathbb{R}$ and $1 \le q \le \infty$.
\begin{enumerate}[label = \textup{(\roman*)}]
\item
If $1\le p <   \infty$, then $F^{s,\infty}_{p ,q} \hookrightarrow  B^{s, \infty}_{p,q}$ and
$\dot{F}^{s,\infty}_{p ,q} \hookrightarrow\dot B^{s, \infty}_{p,q}$.
\item If $1 < p < \infty$  or if $p=q=1$, then $B^{s,1}_{p ,q} \hookrightarrow  F^{s, 1}_{p,q}$
 and $\dot{B}^{s,1}_{p ,q} \hookrightarrow\dot F^{s, 1}_{p,q}$.
\end{enumerate}
\end{lem}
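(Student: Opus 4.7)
The plan is to reduce both embeddings to the sequence-space inequalities
\[
L^{p,q}(l^\infty) \hookrightarrow l^\infty(L^{p,q}) \quad\text{and}\quad l^1(L^{p,q}) \hookrightarrow L^{p,q}(l^1),
\]
since, by definition, $\|f\|_{F^{s,r}_{p,q}}$ and $\|f\|_{B^{s,r}_{p,q}}$ are precisely the $L^{p,q}(l^r)$ and $l^r(L^{p,q})$ norms of the common sequence $\{2^{js}\Delta_j^\psi f\}$ (or $\{2^{js}\Delta_j^\vp f\}$ in the homogeneous case). Thus once the two sequence-space embeddings are established, the $F \hookrightarrow B$ and $B \hookrightarrow F$ inclusions follow immediately, and the homogeneous statements are proved by the identical argument after replacing $\mathbb{N}_0$ by $\mathbb{Z}$.

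For part (i), I would argue as follows. For any sequence $\{g_j\}$ of measurable functions and any fixed $j$, the pointwise bound $|g_j(x)| \le \sup_k |g_k(x)|$ combined with the monotonicity of the Lorentz quasi-norm (which is immediate from monotonicity of the distribution function) gives
\[
\|g_j\|_{L^{p,q}} \le \left\| \sup_k |g_k| \right\|_{L^{p,q}} = \|\{g_k\}\|_{L^{p,q}(l^\infty)}.
\]
Taking the supremum over $j$ on the left yields $\|\{g_j\}\|_{l^\infty(L^{p,q})} \le \|\{g_j\}\|_{L^{p,q}(l^\infty)}$. Applying this with $g_j = 2^{js}\Delta_j^\psi f$ (or $g_j = 2^{js}\Delta_j^\vp f$) gives (i).

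For part (ii), the goal is a Minkowski-type inequality
\[
\left\| \sum_j |g_j| \right\|_{L^{p,q}} \lesssim \sum_j \|g_j\|_{L^{p,q}}.
\]
This cannot be taken for granted because $L^{p,q}$ is only a quasi-Banach space in general, but under the hypotheses $1<p<\infty$ or $p=q=1$ the quasi-norm $\|\cdot\|_{L^{p,q}}$ is equivalent to a genuine norm — the $1<p<\infty$ case is the Bennett result quoted in the preliminaries, while $L^{1,1}=L^1$ is already a Banach space. Hence the usual triangle inequality holds up to a constant, which is all that is needed for the embedding $l^1(L^{p,q}) \hookrightarrow L^{p,q}(l^1)$. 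Specializing to $g_j = 2^{js}\Delta_j^\psi f$ (or $g_j = 2^{js}\Delta_j^\vp f$) gives (ii).

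There is no serious obstacle in this argument; the only subtlety is the normability issue in (ii), which is precisely why the hypothesis excludes the cases $p=1$ with $q>1$ and $p=\infty$ (where $L^{p,q}$ fails to be normable and Minkowski's inequality can genuinely fail for $l^1$ sums). The argument is entirely formal and applies unchanged to both the inhomogeneous and homogeneous scales, so the two pairs of embeddings claimed in the lemma can be treated in parallel.
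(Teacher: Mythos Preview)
Your proof is correct and is exactly the argument the paper has in mind: the paper gives no proof beyond the remark that the lemma ``is easily deduced from the definitions,'' and your reduction to the sequence-space embeddings $L^{p,q}(l^\infty)\hookrightarrow l^\infty(L^{p,q})$ and $l^1(L^{p,q})\hookrightarrow L^{p,q}(l^1)$ (the latter via normability of $L^{p,q}$ when $1<p<\infty$ or $p=q=1$) is precisely that deduction.
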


The interpolation space between two Besov spaces with the same $p$ is again a Besov space, as shown by the following result (see \cite[Section 6.4]{Bergh}, for example).

\begin{thm}\label{lem-itp-homb}
Let  $s, s_1,s_2\in\mathbb{R}$ and $0<\theta<1$ satisfy
$$s_1 \neq s_2 \quad\text{and}\quad
s=(1-\theta)s_1+\theta s_2.
$$
Then for $1\le p,r,r_1,r_2\le\infty$,
$$
( B ^{s_1,r_1}_{p},  B^{s_2,r_2}_{p})_{\theta,r} = B^{s,r}_p \quad \text{and}\quad ( \dot B ^{s_1,r_1}_{p}, \dot B^{s_2,r_2}_{p})_{\theta,r} =\dot B^{s,r}_p.
$$
\end{thm}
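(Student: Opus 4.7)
The plan is to apply the retraction-coretraction method to reduce the interpolation of Besov spaces to the known interpolation identity for weighted vector-valued sequence spaces. This is the standard route (\cite[Section 6.4]{Bergh}), and the key feature distinguishing it from Lemma \ref{interpolation-Lp} is that here the weights, rather than the exponents of the underlying function space, vary across the interpolation couple.

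First I would choose auxiliary bump functions $\{\tilde\psi_j\}_{j\in\mathbb{N}_0}$ satisfying analogues of \eqref{cond1-inhom} and \eqref{cond2-inhom}, with $\tilde\psi_j\equiv 1$ on $\supp \psi_j$, and define the coretraction and retraction
$$
S f = \{\Delta^\psi_j f\}_{j\in\mathbb{N}_0}, \qquad R\{f_j\} = \sum_{j\in\mathbb{N}_0} \Delta^{\tilde\psi}_j f_j,
$$
so that $RS = \mathrm{Id}$ on $\mathcal{S}'$ thanks to the partition of unity \eqref{cond3-inhom}. Introducing the weighted sequence space $l^r_s(L^p)$ with norm $\|\{f_j\}\|_{l^r_s(L^p)} := \|\{2^{js}\|f_j\|_{L^p}\}\|_{l^r}$, one checks directly from the definition that $S$ maps $B^{s,r}_p$ boundedly into $l^r_s(L^p)$, and that $R$ is bounded in the reverse direction using the support conditions and the uniform $L^p$-boundedness of the convolution operators $\Delta^{\tilde\psi}_j$ (via Young's inequality applied to the $L^1$-normalized kernels $\tilde\psi_j^{\vee}$).

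The key input is then the classical interpolation identity for weighted $l^r$ spaces (\cite[Theorem 5.6.1]{Bergh}),
$$
\bigl( l^{r_1}_{s_1}(L^p),\, l^{r_2}_{s_2}(L^p) \bigr)_{\theta,r} = l^r_s(L^p),
$$
which holds precisely because the assumption $s_1 \neq s_2$ makes the ratio of weights $2^{j(s_2-s_1)}$ range over $(0,\infty)$ as $j$ varies. Applying the retraction-coretraction theorem \cite[Theorem 6.4.2]{Bergh} to $R$ and $S$ transfers this identity to the Besov scale and yields the first equality $(B^{s_1,r_1}_p, B^{s_2,r_2}_p)_{\theta,r} = B^{s,r}_p$. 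The homogeneous case is handled analogously, replacing $\mathbb{N}_0$ by $\mathbb{Z}$ and working in $\mathcal{S}_0'$ via Lemma \ref{uniqueness}; the principal obstacle here is ensuring convergence of $R\{f_j\} = \sum_{j\in\mathbb{Z}} \Delta_j^{\tilde\vp} f_j$ in $\mathcal{S}_0'$ and the boundedness $R: l^r_s(L^p) \to \dot B^{s,r}_p$, a subtlety of the homogeneous Littlewood–Paley setting that becomes routine once polynomials are quotiented out.
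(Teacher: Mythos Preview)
Your proposal is correct and follows exactly the standard retraction--coretraction argument from \cite[Section 6.4]{Bergh}, which is precisely the reference the paper cites in lieu of its own proof. There is nothing to add.
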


By Lemma \ref{thm-ebd-FB and BF} and Theorem \ref{lem-itp-homb}, we immediately obtain

\begin{thm}\label{lem-itp-homf-ihomf}
Let  $s, s_1,s_2\in\mathbb{R}$ and $0<\theta<1$ satisfy
$$s_1 \neq s_2 \quad\text{and}\quad
s=(1-\theta)s_1+\theta s_2.
$$
Then for $1 \le p< \infty$ and $1\le r,r_1,r_2\le\infty$,
$$
( F ^{s_1,r_1}_{p},  F^{s_2,r_2}_{p})_{\theta,r} = B^{s,r}_p \quad \text{and}\quad ( \dot F ^{s_1,r_1}_{p}, \dot F^{s_2,r_2}_{p})_{\theta,r} =\dot B^{s,r}_p.
$$
\end{thm}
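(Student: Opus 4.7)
The strategy is a sandwich argument using the monotonicity of the real interpolation functor together with the two ingredients already established: the embedding result Lemma \ref{thm-ebd-FB and BF} and the Besov interpolation result Theorem \ref{lem-itp-homb}.

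First, I would record the two-sided embedding between Triebel--Lizorkin and Besov spaces in the diagonal case $p=q$. By Lemma \ref{thm-ebd-FB and BF}(ii), for $1 \le p < \infty$ (covering both $1 < p < \infty$ and the admissible case $p=q=1$) we have $B^{s,1}_p \hookrightarrow F^{s,1}_p$, and by Lemma \ref{thm-ebd-FB and BF}(i) we have $F^{s,\infty}_p \hookrightarrow B^{s,\infty}_p$. Combining these with the elementary monotonicity $F^{s,r'}_p \hookrightarrow F^{s,r''}_p$ whenever $r' \le r''$ (which follows from $l^{r'} \hookrightarrow l^{r''}$), I obtain the chain
\[
B^{s,1}_p \;\hookrightarrow\; F^{s,r}_p \;\hookrightarrow\; B^{s,\infty}_p \qquad (1 \le p < \infty,\; 1 \le r \le \infty),
\]
and the analogous chain for the homogeneous spaces.

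Next I would apply the real interpolation functor $(\cdot,\cdot)_{\theta,r}$ to the compatible couples $(B^{s_i,1}_p, B^{s_i,\infty}_p)$ and $(F^{s_i,r_i}_p)$ (all viewed inside the common ambient space $\mathscr{S}'$, or $\mathscr{S}_0'$ in the homogeneous case). By the monotonicity of the real method, the above embeddings for $i = 1, 2$ yield
\[
\bigl(B^{s_1,1}_p,\, B^{s_2,1}_p\bigr)_{\theta,r} \;\hookrightarrow\; \bigl(F^{s_1,r_1}_p,\, F^{s_2,r_2}_p\bigr)_{\theta,r} \;\hookrightarrow\; \bigl(B^{s_1,\infty}_p,\, B^{s_2,\infty}_p\bigr)_{\theta,r}.
\]
Now I invoke Theorem \ref{lem-itp-homb}: since $s_1 \ne s_2$ and $s = (1-\theta)s_1 + \theta s_2$, both the first and last terms equal $B^{s,r}_p$. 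Hence the interpolation space in the middle is squeezed to $B^{s,r}_p$ as well, proving the inhomogeneous identity. Repeating the same argument with the homogeneous embeddings and the homogeneous half of Theorem \ref{lem-itp-homb} yields the identity for $\dot F$ and $\dot B$.

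I do not expect any genuine obstacle: the entire argument is a two-line consequence of the preceding lemma and theorem, with the only subtle point being the verification that the couples live inside a common ambient quasi-Banach space so that the real interpolation method is actually applicable. That verification is standard (all the spaces in question are continuously embedded in $\mathscr{S}'$, respectively in $\mathscr{S}_0'/\mathcal{P}$), and requires no new work.
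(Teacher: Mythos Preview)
Your proposal is correct and matches the paper's approach exactly: the paper states that the theorem follows immediately from Lemma \ref{thm-ebd-FB and BF} and Theorem \ref{lem-itp-homb}, which is precisely your sandwich argument.
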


\subsection{Some embedding results}

The following  is a list of  more or less standard embedding  results  for  the function spaces $\ihsf{s}{p}{q}{r}$, $\hsf{s}{p}{q}{r}$, $\ihsb{s}{p}{q}{r}$, and $\hsb{s}{p}{q}{r}$ for $s\ge0$, proofs of which will be  given   in Appendix.

\begin{thm}\label{prop-relation} Let $s>0$  and $1\le  q, r \le \infty$.
\begin{enumerate}[label = \textup{(\roman*)}]
\item
If  $1<p<\infty$, then $\ihsf{0}{p}{q}{2} = \hsf{0}{p}{q}{2}=L^{p,q} $.
\item  If $1<p<\infty$   or if $  p=q =1$, then
$$
F^{0,1}_{p,q}  \hookrightarrow L^{p,q}  ,   \quad  \dot{F}^{0,1}_{p,q} \hookrightarrow L^{p,q} , \quad\mbox{and}\quad \ihsf{s}{p}{q}{r}=L^{p,q}\cap \hsf{s}{p}{q}{r}.
$$
\item  If $1<p<\infty$   or if $1\le p=q \le \infty$, then
$$
 B^{0,1}_{p,q}  \hookrightarrow L^{p,q} \hookrightarrow B^{0,\infty}_{p,q} , \quad  \dot{B}^{0,1}_{p,q} \hookrightarrow L^{p,q} \hookrightarrow \dot B^{0,\infty}_{p,q} , \quad \mbox{and}\quad\ihsb{s}{p}{q}{r}=L^{p,q}\cap \hsb{s}{p}{q}{r}.
$$
\end{enumerate}
\end{thm}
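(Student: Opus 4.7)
The plan is to treat the three parts in order, reducing everything to the classical Littlewood--Paley characterization $F^{0,2}_p = L^p$ for $1 < p < \infty$ combined with the interpolation and embedding tools assembled earlier in the paper. For (i), I would fix $1 < p_1 \neq p_2 < \infty$ and $\theta \in (0,1)$ with $1/p = (1-\theta)/p_1 + \theta/p_2$; the classical identification $F^{0,2}_{p_i} = L^{p_i}$ (and its homogeneous counterpart) with equivalent norms, Theorem \ref{lem-itp-inhomf} with $s = 0$ and $r = 2$, and the standard real interpolation $(L^{p_1}, L^{p_2})_{\theta,q} = L^{p,q}$ then combine at once to give $\ihsf{0}{p}{q}{2} = \hsf{0}{p}{q}{2} = L^{p,q}$.

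The embedding assertions in (ii) and (iii) I would handle by different routes. In (ii), for $1 < p < \infty$ the embedding $F^{0,1}_{p,q} \hookrightarrow L^{p,q}$ is immediate from $L^{p,q}(l^1) \hookrightarrow L^{p,q}(l^2)$ in \eqref{eq-ebd} together with (i), and the remaining case $p = q = 1$ reduces via $F^{0,1}_{1,1} = F^{0,1}_1$ to the pointwise bound $|f(x)| \le \sum_{j \ge 0} |\Delta_j^\psi f(x)|$; the homogeneous statement is parallel, with some care needed for the convergence of $\sum_{j \in \mathbb{Z}} \Delta_j^\varphi f$. For (iii), Young's inequality $\|\phi \ast f\|_{L^{p,q}} \les \|\phi\|_{L^1} \|f\|_{L^{p,q}}$ (valid under the stated conditions on $p, q$) together with the uniform bound $\sup_j \|\psi_j^\vee\|_{L^1} + \sup_j \|\varphi_j^\vee\|_{L^1} < \infty$ yields $L^{p,q} \hookrightarrow B^{0,\infty}_{p,q}$ and $L^{p,q} \hookrightarrow \dot B^{0,\infty}_{p,q}$, while the reverse $B^{0,1}_{p,q} \hookrightarrow L^{p,q}$ comes from the (quasi-)triangle inequality applied term by term to the Littlewood--Paley series.

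Both identifications $\ihsf{s}{p}{q}{r} = L^{p,q} \cap \hsf{s}{p}{q}{r}$ and $\ihsb{s}{p}{q}{r} = L^{p,q} \cap \hsb{s}{p}{q}{r}$ rest on the hypothesis $s > 0$, used twice. First, the elementary H\"older estimate
$$
\sum_{j \ge 0} |\Delta_j^\psi f(x)| \le C_{s,r} \bigl\| \{ 2^{js} \Delta_j^\psi f(x) \}_{j \ge 0} \bigr\|_{l^r}
$$
yields $\ihsf{s}{p}{q}{r} \hookrightarrow F^{0,1}_{p,q} \hookrightarrow L^{p,q}$ by what has just been proved (and analogously for Besov). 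Second, for the converse I would split the homogeneous sum at $j = 0$: the high-frequency block ($j \ge 1$) is comparable to the inhomogeneous block by the almost-orthogonality of $\{\Delta_j^\psi\}$ and $\{\Delta_k^\varphi\}$ (whose Fourier supports are disjoint for $|j - k| > 1$), while the low-frequency block ($j \le 0$) is dominated pointwise by $C_{s,r} \|\Delta_0^\psi f\|_{L^{p,q}}$, using Young's inequality to bound $\|\Delta_j^\varphi \Delta_0^\psi f\|_{L^{p,q}} \les \|\Delta_0^\psi f\|_{L^{p,q}}$ and the summability of $\{2^{js}\}_{j \le 0}$ in $l^r$.

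The chief obstacle will be handling the endpoint cases $r \in \{1, \infty\}$ and $q \in \{1, \infty\}$ without recourse to Fefferman--Stein-type vector-valued maximal estimates, which may fail at those endpoints; the argument must therefore go entirely through pointwise H\"older bounds, Young's inequality in Lorentz spaces, and term-by-term summation. A secondary concern is that elements of $\scc_0'$ are defined only modulo polynomials, so Lemma \ref{uniqueness} must be invoked to make the pieces $\Delta_j^\varphi f$ unambiguous before they are compared with the inhomogeneous pieces $\Delta_j^\psi f$.
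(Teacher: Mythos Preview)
Your outline is correct and matches the paper's proof closely: part (i) by interpolating the classical identity $F^{0,2}_p=\dot F^{0,2}_p=L^p$, parts (ii)--(iii) via the uniform bound $\|\vp_j^\vee\|_{L^1}\les 1$ together with Young's inequality, convergence of the Littlewood--Paley series in $L^{p,q}$, and the $s>0$ hypothesis to absorb low frequencies.

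Two small points deserve attention. First, what you call ``the converse'' is actually the forward estimate $\|f\|_{\dot F^{s,r}_{p,q}}\les \|f\|_{F^{s,r}_{p,q}}$; the genuine converse $L^{p,q}\cap\dot F^{s,r}_{p,q}\hookrightarrow F^{s,r}_{p,q}$ is the easier direction, requiring only $\|\Delta_0^\psi f\|_{L^{p,q}}\les\|f\|_{L^{p,q}}$ and the agreement $\Delta_j^\psi=\Delta_j^\vp$ for $j\ge 1$ (with a compatible choice of partitions). Second, for the Triebel--Lizorkin low-frequency block your stated route through ``summability of $\{2^{js}\}_{j\le0}$ in $l^r$'' plus termwise Young bounds leads to $\|\sup_{j\le0}|\Delta_j^\vp f|\|_{L^{p,q}}$, and controlling that supremum pointwise would force a maximal operator --- precisely what you wished to avoid, and unavailable at $p=q=1$. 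The paper instead passes through $L^{p,q}(l^1)\hookrightarrow L^{p,q}(l^r)$ and uses the countable triangle inequality in the \emph{normable} space $L^{p,q}$ to get
\[
\Bigl\|\{2^{js}\Delta_j^\vp f\}_{j<0}\Bigr\|_{L^{p,q}(l^r)}\les \sum_{j<0}2^{js}\|\Delta_j^\vp f\|_{L^{p,q}}\les \|f\|_{L^{p,q}},
\]
which is exactly the ``term-by-term summation'' you anticipated but with $l^1$ in place of $l^r$.
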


The embeddings  between the Triebel-Lizorkin-Lorentz spaces and Besov-Lorentz spaces are completely characterized by Seeger and Trebels in \cite[Theorems 1.5 and 1.6]{seeger} as follows.

\begin{thm}\label{ebd-inhom-TLL}
Let $s_1,s_2\in\mathbb{R}$, $1 \le p_1,p_2<\infty$, and $1\le q_1,q_2,r_1,r_2\le \infty$. Then the embedding
$$
\ihsf{s_1}{p_1}{q_1}{r_1} \hookrightarrow \ihsf{s_2}{p_2}{q_2}{r_2}
$$
holds if and only if one of the following four conditions is satisfied:
\begin{enumerate}[label = \textup{(\roman*)}]
\item  $s_1=s_2$, $p_1=p_2$, $q_1\le q_2$, $r_1\le r_2$.
\item  $s_1 >s_2$, $p_1=p_2$, $q_1\le q_2$.
\item $s_1 -s_2 = n/p_1- n/p_2 >0 $, $q_1\le q_2$.
\item  $s_1 -s_2 > n/p_1- n/p_2 >0 $.
\end{enumerate}
\end{thm}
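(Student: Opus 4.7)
The plan is to prove sufficiency of (i)-(iv) and necessity separately. For sufficiency, cases (i) and (ii) are immediate from the monotonicity of the vector-valued Lorentz spaces: (i) is just (\ref{eq-ebd}) with $p_1=p_2$, and (ii) follows from the pointwise sequence inequality $\|\{2^{js_2}a_j\}\|_{l^{r_2}}\lesssim\|\{2^{js_1}a_j\}\|_{l^{r_1}}$ (valid whenever $s_1>s_2$, regardless of $r_i$) combined with $L^{p,q_1}\hookrightarrow L^{p,q_2}$. The Sobolev-type cases (iii) and (iv) form the substance of the theorem; I would handle both by real interpolation in $p$ using Theorem \ref{lem-itp-inhomf}. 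Choose auxiliary exponents $1<p_1^{(0)}\neq p_1^{(1)}<\infty$ with $1/p_1=(1-\theta)/p_1^{(0)}+\theta/p_1^{(1)}$, and similarly $p_2^{(k)}$ for $p_2$, arranged so that each pair $(p_1^{(k)},p_2^{(k)})$ either satisfies the strict Sobolev condition (in case (iv)) or sits on the same Sobolev line as $(p_1,p_2)$ (in case (iii)). For each $k$ the classical Jawerth-Franke embedding then gives $\ihsf{s_1}{p_1^{(k)}}{p_1^{(k)}}{r_1}\hookrightarrow \ihsf{s_2}{p_2^{(k)}}{p_2^{(k)}}{r_2}$; applying Theorem \ref{lem-itp-inhomf} lifts this to $\ihsf{s_1}{p_1}{q_1}{r_1}\hookrightarrow \ihsf{s_2}{p_2}{q_1}{r_2}$, after which $q_1\le q_2$ is absorbed by the trivial Lorentz embedding.

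The main obstacle is case (iii), the critical Sobolev line, because the auxiliary pairs must themselves lie on that same line, leaving no room to gain from a strict Sobolev inequality. One is forced to invoke the sharp endpoint version of the classical Jawerth-Franke embedding, namely that $F^{s_1,r_1}_{p_1}\hookrightarrow F^{s_2,r_2}_{p_2}$ with $s_1-n/p_1=s_2-n/p_2$ and $p_1<p_2$ holds for all $r_1,r_2\in[1,\infty]$, whose proof goes through atomic decompositions and the Franke/Jawerth duality arguments. Once that sharp endpoint is at hand the interpolation step is routine, and the convexity $1/p_1-1/p_2=(1-\theta)(1/p_1^{(0)}-1/p_2^{(0)})+\theta(1/p_1^{(1)}-1/p_2^{(1)})$ guarantees that the interpolated pair lies on the original Sobolev line.

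For necessity, I would produce parameter-dependent test families and read off obstructions. Scaling $f(x)\mapsto f(\lambda x)$ applied to a function with Fourier support in a single Littlewood-Paley band forces the homogeneity condition $s_1-n/p_1\ge s_2-n/p_2$; displacing that band to high frequency forces $s_1\ge s_2$; together these cut the admissible region into exactly the four regimes (i)-(iv). Inside each regime, the fine-index inequalities $q_1\le q_2$ (in (i)-(iii)) and $r_1\le r_2$ (in (i)) are forced by lacunary superpositions $\sum_k c_k\, \Delta^\psi_{j_k} g$ built from widely separated frequencies $j_k$, suitably chosen amplitudes $c_k$, and spatially disjoint translates, engineered so that the $L^{p,q}(l^r)$-quasinorm reduces either to a diagonal $l^r$ sum (isolating $r_1\le r_2$) or to the $L^{p,q}$-norm of a disjointly supported superposition (isolating $q_1\le q_2$). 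Case (iii) necessity is again the subtlest, requiring a test family that saturates the Sobolev line, e.g., a mollified $|x|^{-n/p_1}$ with a frequency cutoff whose $\ihsf{s_1}{p_1}{q_1}{r_1}$-quasinorm faithfully resolves the Lorentz index.
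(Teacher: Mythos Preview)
The paper does not supply its own proof of this statement: Theorem~\ref{ebd-inhom-TLL} is quoted verbatim from Seeger and Trebels \cite[Theorems 1.5 and 1.6]{seeger}. What the paper \emph{does} prove is the homogeneous analogue, Theorem~\ref{ebd-hom-TLL}, in Appendix~A.3, and your sufficiency outline matches that proof almost exactly: (i) by the monotonicity \eqref{eq-ebd}, and (iii)--(iv) by invoking the sharp Jawerth embedding $\dot F^{s_1,\infty}_{p_1}\hookrightarrow\dot F^{s_2,1}_{p_2}$ at the classical level and then lifting to the Lorentz scale through the real-interpolation identity of Theorem~\ref{lem-itp-inhomf}. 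Your necessity sketch is more in the spirit of Seeger--Trebels (direct test families for every index), whereas the paper's homogeneous necessity argument is less hands-on: after extracting $s_1-n/p_1=s_2-n/p_2$ and $r_1\le r_2$ from single-band and lacunary test functions, it deduces $q_1\le q_2$ by bootstrapping the already-proved sufficiency to reduce to the inhomogeneous statement (i.e., to the cited Seeger--Trebels result).

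One genuine gap in your sufficiency plan: Theorem~\ref{lem-itp-inhomf} requires all three exponents $p,p_1,p_2$ to lie strictly in $(1,\infty)$, so when the target theorem allows $p_1=1$ you cannot choose auxiliary $1<p_1^{(0)},p_1^{(1)}<\infty$ with $1/p_1=(1-\theta)/p_1^{(0)}+\theta/p_1^{(1)}$. Thus your interpolation step does not cover the endpoint $p_1=1$ (with $q_1>1$) in cases (iii)--(iv). The same issue is why the paper restricts its homogeneous Theorem~\ref{ebd-hom-TLL} to $1<p_1,p_2<\infty$; the full range $p_1\ge 1$ in the inhomogeneous theorem genuinely needs the more delicate Seeger--Trebels machinery (sequence-space reductions and sharp embeddings that do not go through Theorem~\ref{lem-itp-inhomf}). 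A smaller point: your case (ii) argument is fine, but note that the paper's homogeneous version has no such case because scaling rules it out, so there is nothing to compare against there.
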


\begin{thm}\label{ebd-inhom-B}
Let $s_1,s_2\in\mathbb{R}$, $1 \le p_1,p_2<\infty$, and $1\le q_1,q_2,r_1,r_2\le \infty$. Then the embedding
$$
\ihsb{s_1}{p_1}{q_1}{r_1} \hookrightarrow \ihsb{s_2}{p_2}{q_2}{r_2}
$$
holds if and only if one of the following four conditions is satisfied:
\begin{enumerate}[label = \textup{(\roman*)}]
\item  $s_1=s_2$, $p_1=p_2$, $q_1\le q_2$, $r_1\le r_2$.
\item  $s_1 >s_2$, $p_1=p_2$, $q_1\le q_2$.
\item $s_1 -s_2 = n/p_1- n/p_2 >0 $, $r_1\le r_2$.
\item  $s_1 -s_2 > n/p_1- n/p_2 >0 $.
\end{enumerate}
\end{thm}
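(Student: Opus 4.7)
This is the Besov analogue of Theorem \ref{ebd-inhom-TLL}, and I would follow a parallel strategy: prove sufficiency of each of (i)--(iv) by block-by-block estimates on the Littlewood--Paley pieces $\lpo{\psi}{f}$, and establish necessity by dilation and translation counterexamples. The technical workhorse is a Bernstein-type inequality in Lorentz spaces: if $g\in\scc'$ satisfies $\supp \hat g \subset \{|\xi|\le 2^{j+1}\}$ and $1\le p_1\le p_2<\infty$, then
$$
\|g\|_{L^{p_2,q_2}} \lesssim 2^{jn(1/p_1-1/p_2)}\|g\|_{L^{p_1,q_1}}\quad\text{for all }1\le q_1,q_2\le\infty.
$$
This follows by writing $g=\phi_j\ast g$ with $\phi_j=2^{jn}\phi(2^j\cdot)$ for a Schwartz $\phi$ satisfying $\hat\phi \equiv 1$ on $\{|\xi|\le 2\}$, and applying the Young--O'Neil convolution inequality in Lorentz spaces with $1/r = 1-1/p_1+1/p_2$ together with $\|\phi_j\|_{L^{r,1}}\sim 2^{jn(1/p_1-1/p_2)}$. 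The key point is that the $q$-indices on both sides are decoupled because the Schwartz kernel lies in every $L^{r,1}$.

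Granted this block estimate, case (i) is immediate from $L^{p,q_1}\hookrightarrow L^{p,q_2}$ and the sequence-space embedding (\ref{eq-ebd}). For (ii), the decomposition $2^{js_2}=2^{j(s_2-s_1)}\cdot 2^{js_1}$ combined with H\"older in $l^r$ absorbs the factor $2^{j(s_2-s_1)}$, which is summable on $j\ge 0$ since $s_2<s_1$. For (iii), Bernstein sharpens to $2^{js_2}\|\lpo{\psi}{f}\|_{L^{p_2,q_2}}\lesssim 2^{js_1}\|\lpo{\psi}{f}\|_{L^{p_1,q_1}}$ with no residual factor (since $s_1-s_2=n/p_1-n/p_2$), and then $l^{r_1}\hookrightarrow l^{r_2}$ closes the estimate. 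For (iv), Bernstein leaves a strictly negative geometric factor $2^{j(s_2-s_1+n/p_1-n/p_2)}$, which a H\"older estimate in $j$ absorbs regardless of the relation between $r_1$ and $r_2$.

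For necessity, the sharpness of the critical case (iii) is detected by the family $g_j(x) = 2^{jn/p_1}g(2^jx)$ for a fixed $g\in\scc$ with $\supp \hat g\subset\{1/2\le|\xi|\le 2\}$: by the scaling of Lorentz norms, $\|g_j\|_{L^{p_1,q_1}}\sim 1$ and $\|g_j\|_{L^{p_2,q_2}}\sim 2^{jn(1/p_1-1/p_2)}$ independently of the $q$'s, so $f=\sum_j \lambda_j g_j$ satisfies $\|f\|_{\ihsb{s_i}{p_i}{q_i}{r_i}}\sim \|\{\lambda_j 2^{js_1}\}\|_{l^{r_i}}$ for both $i=1,2$, forcing $r_1\le r_2$. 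Analogous lacunary dilations and widely-spaced translation constructions rule out violations of the Sobolev scaling or of $p_1\le p_2$, while single-bump examples with tailored decreasing rearrangements handle the inner $q$-indices in case (i).

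\emph{Main obstacle.} The most delicate distinction from Theorem \ref{ebd-inhom-TLL} is that the critical embedding (iii) preserves the outer index $r$ rather than the inner $q$, because the Besov norm places $l^r$ outside $L^{p,q}$. The counterexample must therefore produce atoms whose $L^{p,q}$ norms (not merely $L^p$ norms) realize a sequence lying in $l^{r_1}$ but not in $l^{r_2}$; the pure dilation family above accomplishes this precisely because the Lorentz norm of $g(2^j\cdot)$ is governed solely by $p$ and scales as $2^{-jn/p}$. The remaining necessity arguments and the full technical details are carried out by Seeger and Trebels \cite{seeger}, to which I would appeal.
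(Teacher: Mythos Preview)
Your proposal is correct; the paper itself does not prove this theorem but attributes it directly to Seeger and Trebels \cite[Theorems~1.5 and~1.6]{seeger}, exactly as you do at the end. Your sufficiency sketch via a Bernstein--Young--O'Neil block estimate is moreover the same technique the paper uses in the Appendix for the homogeneous analogue (Theorem~\ref{ebd-hom-B}, via Lemma~\ref{lem-hom-B} and Lemma~\ref{interpolation-Lp}), so there is no substantive difference in approach.
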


\begin{rmk}\label{rmk-ebd-inhom-B}
 In fact, it can be shown that $\ihsb{s_1}{p_1}{q_1}{r_1} \hookrightarrow \ihsb{s_2}{p_2}{q_2}{r_2}$ holds even when $p_1=q_1=\infty$ or $p_2=q_2= \infty$ if one of the conditions (i)-(iv) is satisfied. Indeed, it is easy to show  that if (i) or (ii) holds when $p_1 =p_2 =q_1 =q_2 =\infty$, then $B^{s_1,r_1}_{\infty}\hookrightarrow B^{s_2,r_2}_{\infty}$. Sufficiency of (iii) is proved by exactly the same argument as  in the proof of  Theorem \ref{ebd-hom-B} in Appendix. Finally, if (iv) holds when $p_1 < p_2 =q_2=\infty$, then $B^{s_1,r_1}_{p_1,q_1} \hookrightarrow B^{s_2  + n/p_1, r_2}_{p_1,q_1} \hookrightarrow B^{s_2,r_2}_{\infty}$ by sufficiency of (ii) and (iii).
\end{rmk}

It was remarked in \cite[p.1020]{seeger} that homogeneous counterparts of  Theorems \ref{ebd-inhom-TLL} and \ref{ebd-inhom-B} can be proved by the same arguments as in the inhomogeneous case, but without explicit statements. For the sake of completeness, we state and prove some  embedding theorems for homogeneous spaces.

The following    is due to Jawerth \cite[Theorem 2.1]{Jaw1}.

\begin{thm}\label{thm-ebd-FB-2}
Let $s_1,s_2\in\mathbb{R}$ and $1 \le   p_1,p_2 <  \infty$ satisfy
$$
s_1 - s_2 =\frac{n}{p_1}-\frac{n}{p_2}>0.
$$
Then
$$
\dot{F}^{s_1,\infty}_{p_1} \hookrightarrow\dot F^{s_2,1}_{p_2} , \quad \dot{F}^{s_1,\infty}_{p_1} \hookrightarrow  \dot B^{s_2,p_1}_{p_2}  , \quad\mbox{and}\quad \dot{B}^{s_1,r }_{p_1} \hookrightarrow\dot B^{s_2,r }_{p_2} \quad  (1 \le r \le \infty).
$$
\end{thm}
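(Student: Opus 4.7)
The plan is to adapt Jawerth's proof of Theorem~2.1 in \cite{Jaw1} (stated there for inhomogeneous spaces) to the homogeneous setting. Throughout, Lemma \ref{uniqueness} lets us identify elements of $\scc_0'$ with tempered distributions modulo polynomials, so that the dyadic decomposition $\{\Delta_j^\vp f\}_{j\in\mathbb{Z}}$ and the associated sums over $j\in\mathbb{Z}$ are well-defined.

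Embedding (iii) is pure Bernstein. Since $\Delta_j^\vp f$ has Fourier support in $\{|\xi|\sim 2^j\}$, Bernstein's inequality gives
$$
\|\Delta_j^\vp f\|_{L^{p_2}}\les 2^{jn(1/p_1-1/p_2)}\|\Delta_j^\vp f\|_{L^{p_1}}.
$$
Multiplying by $2^{js_2}$ and using the scaling identity $s_1-s_2=n/p_1-n/p_2$ gives $2^{js_2}\|\Delta_j^\vp f\|_{L^{p_2}}\les 2^{js_1}\|\Delta_j^\vp f\|_{L^{p_1}}$; taking the $\ell^r(\mathbb{Z})$-norm in $j$ finishes the proof.

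For (i) and (ii), the key device is the envelope $g(x)=\sup_{k\in\mathbb{Z}}2^{ks_1}|\Delta_k^\vp f(x)|$, which satisfies $\|g\|_{L^{p_1}}=\|f\|_{\dot F^{s_1,\infty}_{p_1}}$. Combining the definition $|\Delta_j^\vp f(x)|\le 2^{-js_1}g(x)$ with the Bernstein bound $\|\Delta_j^\vp f\|_{L^\infty}\les 2^{jn/p_1}\|\Delta_j^\vp f\|_{L^{p_1}}$ and the obvious $\|2^{js_1}\Delta_j^\vp f\|_{L^{p_1}}\le\|g\|_{L^{p_1}}$ produces two pointwise controls:
$$
2^{js_2}|\Delta_j^\vp f(x)|\le 2^{-j(n/p_1-n/p_2)}g(x),\qquad 2^{js_2}|\Delta_j^\vp f(x)|\les 2^{jn/p_2}\|g\|_{L^{p_1}}.
$$
For (i), we split $\sum_{j\in\mathbb{Z}}2^{js_2}|\Delta_j^\vp f(x)|$ at the balancing index $j_0(x)=\lfloor(p_1/n)\log_2(g(x)/\|g\|_{L^{p_1}})\rfloor$, using the Bernstein bound for $j\le j_0(x)$ and the envelope bound for $j>j_0(x)$. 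Summing the two geometric series (both of which converge thanks to $n/p_1-n/p_2>0$) yields the pointwise estimate $\sum_{j\in\mathbb{Z}}2^{js_2}|\Delta_j^\vp f(x)|\les g(x)^{p_1/p_2}\|g\|_{L^{p_1}}^{1-p_1/p_2}$. Taking $L^{p_2}$-norms and using $\|g^{p_1/p_2}\|_{L^{p_2}}=\|g\|_{L^{p_1}}^{p_1/p_2}$ delivers (i).

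The main obstacle is (ii): the single-threshold splitting used for (i) controls $2^{js_2}\|\Delta_j^\vp f\|_{L^{p_2}}$ only uniformly in $j$, producing merely $\dot F^{s_1,\infty}_{p_1}\hookrightarrow\dot B^{s_2,\infty}_{p_2}$ rather than the claimed $\ell^{p_1}$-summability. To recover this, one further decomposes $g$ into dyadic level sets $\{g\sim 2^k\}$, estimates $\|\Delta_j^\vp f\|_{L^{p_2}}$ restricted to each level via the sharper envelope bound $|\Delta_j^\vp f|\le 2^{-js_1+k+O(1)}$ combined with Bernstein, and sums in both indices $(j,k)$ by invoking the Markov estimate $|\{g>t\}|\le t^{-p_1}\|g\|_{L^{p_1}}^{p_1}$. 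This is the core of Jawerth's argument; the adaptation to $\mathbb{Z}$ is routine but requires attention to the convergence of the sums at both high- and low-frequency ends, where the strict inequality $n/p_1-n/p_2>0$ provides the geometric decay in both directions.
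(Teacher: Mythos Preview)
The paper does not prove this theorem; it merely attributes it to Jawerth \cite[Theorem~2.1]{Jaw1} and quotes the result. Your proposal is therefore not being compared against an in-paper argument but against the cited reference, and indeed you explicitly follow Jawerth's method. The arguments for (iii) (pure Bernstein) and (i) (envelope $g$ plus splitting at the balancing index $j_0(x)$) are correct and complete as written; the only caveat is the trivial case $g(x)=0$, where there is nothing to estimate. For (ii) you give only a strategic outline---level-set decomposition of $g$ combined with Bernstein and the Markov bound---which is the right route, but the double sum in $(j,k)$ is not actually carried out; if you want a self-contained proof you should write down the estimate $\|\Delta_j^\vp f\|_{L^{p_2}}^{p_2}\lesssim\sum_{k}\min\{2^{-js_1+k},2^{j(n/p_1-s_1)}\|g\|_{L^{p_1}}\}^{p_2}|\{g\sim 2^k\}|$ and check that the resulting sum over $j$ yields the $\ell^{p_1}$ bound. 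In short: your plan matches the source the paper defers to, and is sound, with (ii) left at the level of a sketch.
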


The proofs of the following theorems will be provided later in Appendix.

\begin{thm}\label{ebd-hom-TLL}
Let $s_1,s_2\in\mathbb{R}$, $1< p_1,p_2<\infty$, and $1\le q_1,q_2,r_1,r_2\le \infty$. Then the embedding
$$
\hsf{s_1}{p_1}{q_1}{r_1} \hookrightarrow \hsf{s_2}{p_2}{q_2}{r_2}
$$
holds if and only if one of the following conditions is satisfied:
\begin{enumerate}[label = \textup{(\roman*)}]
\item  $s_1=s_2$, $p_1=p_2$, $q_1\le q_2$, $r_1\le r_2$.
\item $s_1 -s_2 = n/p_1- n/p_2 >0 $, $q_1\le q_2$.
\end{enumerate}
\end{thm}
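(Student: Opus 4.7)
\emph{Sufficiency.} When (i) holds, the embedding follows at once from the lattice inclusions $L^{p,q_1}(l^{r_1})\hookrightarrow L^{p,q_2}(l^{r_2})$ in \eqref{eq-ebd}, applied to a common Littlewood--Paley decomposition. For (ii), my plan is to interpolate Jawerth's endpoint embedding. Choose $1<p_1^0<p_1<p_1^1<\infty$ close enough to $p_1$ and $\theta\in(0,1)$ with $1/p_1=(1-\theta)/p_1^0+\theta/p_1^1$, and define $p_2^i$ by $1/p_2^i=1/p_1^i-(s_1-s_2)/n$; then $1<p_2^0<p_2<p_2^1<\infty$ and the same $\theta$ works for $p_2$. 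Theorem \ref{thm-ebd-FB-2} gives $\dot F^{s_1,\infty}_{p_1^i}\hookrightarrow\dot F^{s_2,1}_{p_2^i}$ at each endpoint, and Theorem \ref{lem-itp-inhomf} (using equality at $r=\infty$ and the embedding at $r=1$) together with the monotonicity of real interpolation yields
\[
\hsf{s_1}{p_1}{q_1}{\infty}=(\dot F^{s_1,\infty}_{p_1^0},\dot F^{s_1,\infty}_{p_1^1})_{\theta,q_1}\hookrightarrow(\dot F^{s_2,1}_{p_2^0},\dot F^{s_2,1}_{p_2^1})_{\theta,q_1}\hookrightarrow\hsf{s_2}{p_2}{q_1}{1}.
\]
Composing with the trivial embeddings $\hsf{s_1}{p_1}{q_1}{r_1}\hookrightarrow\hsf{s_1}{p_1}{q_1}{\infty}$ and $\hsf{s_2}{p_2}{q_1}{1}\hookrightarrow\hsf{s_2}{p_2}{q_2}{r_2}$ (using $q_1\le q_2$, $1\le r_2$, and \eqref{eq-ebd}) finishes this direction.

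\emph{Necessity.} Assume the embedding holds. The standard dilation identity $\|f(\lambda\cdot)\|_{\hsf{s}{p}{q}{r}}=\lambda^{s-n/p}\|f\|_{\hsf{s}{p}{q}{r}}$, applied to any fixed nonzero $f\in\scc_0$ and letting $\lambda\to 0^+$ and $\lambda\to\infty$, forces the scale relation $s_1-n/p_1=s_2-n/p_2$. To separate cases (i) and (ii), I would exploit a single--block modulation: fix $g\in\scc$ with $\hat g$ supported in a small ball about the origin and set $g_k(x)=g(x)e^{i\langle x,\,2^ke_1\rangle}$, so that for $k$ large $\widehat{g_k}$ lies in a single dyadic annulus $\{2^{k-1}\le|\xi|\le 2^{k+1}\}$ and a direct computation yields $\|g_k\|_{\hsf{s}{p}{q}{r}}\sim 2^{ks}\|g\|_{L^{p,q}}$. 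Plugging this into the embedding and letting $k\to+\infty$ forces $s_2\le s_1$. Combined with the scale relation this produces either (i) $s_1=s_2$ and $p_1=p_2$, or $s_1>s_2$ with $s_1-s_2=n/p_1-n/p_2>0$ (i.e.\ case (ii) modulo the constraint $q_1\le q_2$).

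To pin down the index constraints, I would pass to superpositions of well-spaced modulated blocks. With $f=\sum_k a_k g_{3k}$ (the spacing guarantees that distinct $g_{3k}$'s occupy disjoint Littlewood--Paley supports), a direct calculation gives $\|f\|_{\hsf{s_i}{p_i}{q_i}{r_i}}\sim\|g\|_{L^{p_i,q_i}}\|\{2^{3ks_i}a_k\}_k\|_{l^{r_i}}$. In case (i) this reduces the assumed embedding to $\|\{a_k\}\|_{l^{r_2}}\lesssim\|\{a_k\}\|_{l^{r_1}}$ for arbitrary finitely supported $\{a_k\}$, hence $r_1\le r_2$. For the Lorentz constraint $q_1\le q_2$, I would replace the fixed $g$ by a family of near-extremizers for $L^{p_i,q_1}\nhra L^{p_i,q_2}$ (when $q_1>q_2$), suitably frequency-localized to a single dyadic annulus by composition with a fixed band-limited smoothing (e.g.\ truncations of $|x|^{-n/p_i}(\log 1/|x|)^{-\alpha}$ convolved with a Schwartz function whose Fourier transform is a bump in one annulus), and deduce a contradiction from the fact that the embedding constant is independent of the extremizer. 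The main obstacle is precisely this last step: producing test functions that are simultaneously Lorentz-extremal and strictly frequency-localized. This is the same technical point underlying the inhomogeneous Theorem \ref{ebd-inhom-TLL} of Seeger--Trebels, and I expect the homogeneous adaptation to proceed along the same lines with the usual care required to stay in $\scc_0'$.
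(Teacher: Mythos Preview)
Your sufficiency argument matches the paper's: condition (i) via \eqref{eq-ebd}, and condition (ii) by real-interpolating Jawerth's embedding $\dot F^{s_1,\infty}_{p_1}\hookrightarrow\dot F^{s_2,1}_{p_2}$ through Theorem \ref{lem-itp-inhomf}. For necessity, your dilation argument and your modulated-bump test functions $\sum_k a_k g_{3k}$ recover the scale relation, $s_1\ge s_2$, and (when $s_1=s_2$) $r_1\le r_2$ in essentially the same way as the paper.

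The divergence is in extracting $q_1\le q_2$, and here you have a genuine gap. Your plan---frequency-localize a Lorentz near-extremizer by convolving $|x|^{-n/p}(\log 1/|x|)^{-\alpha}$ with a band-limited bump---is precisely the step you flag as ``the main obstacle,'' and rightly so: the convolution smooths the singularity that encodes the Lorentz index, so it is not clear the extremality survives, and you do not actually carry this out. The paper sidesteps this entirely by a reduction trick. It picks $\overline p<p_1$ with $\alpha:=n/\overline p-n/p_1>0$ and uses the \emph{already-proved sufficiency} of (ii) to write
\[
\hsf{s_1+\alpha}{\overline p}{q_1}{\infty}\hookrightarrow\hsf{s_1}{p_1}{q_1}{r_1}\hookrightarrow\hsf{s_2}{p_2}{q_2}{r_2}\hookrightarrow\hsf{s_2}{p_2}{q_2}{\infty}.
\]
Since $\Lambda^\sigma$ is an isomorphism on $\hsf{s}{p}{q}{r}$ for $1<r\le\infty$, one can lift by $\Lambda^{-\beta}$ with $\beta$ large and invoke Theorem \ref{prop-relation} to convert this into an \emph{inhomogeneous} embedding $\ihsf{s_1+\alpha+\beta}{\overline p}{q_1}{\infty}\hookrightarrow\ihsf{s_2+\beta}{p_2}{q_2}{\infty}$. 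Now the necessity half of Seeger--Trebels's Theorem \ref{ebd-inhom-TLL} applies as a black box (we are in its case (iii), since $s_1+\alpha-s_2=n/\overline p-n/p_2>0$) and forces $q_1\le q_2$. So rather than rebuilding the delicate Lorentz test-function machinery in the homogeneous setting, the paper simply transports the problem to the inhomogeneous one where Seeger--Trebels have already done the work.
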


\begin{thm}\label{ebd-hom-B}
Let $s_1,s_2\in\mathbb{R}$, $1 \le p_1,p_2 \le \infty$, and $1\le q_1,q_2,r_1,r_2\le \infty$. Assume that $q_i=\infty$ when $p_i=\infty$ for $i=1,2$. Then the embedding
$$
\hsb{s_1}{p_1}{q_1}{r_1} \hookrightarrow\hsb{s_2}{p_2}{q_2}{r_2}
$$
holds if one of the following conditions is satisfied:
\begin{enumerate}[label = \textup{(\roman*)}]
\item  $s_1=s_2$, $p_1=p_2$, $q_1\le q_2$, $r_1\le r_2$.
\item $s_1 -s_2 = n/p_1- n/p_2 >0 $, $r_1\le r_2$.
\end{enumerate}
\end{thm}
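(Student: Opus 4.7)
The theorem asserts only sufficiency, so it suffices to produce the embedding assuming (i) or (ii). Case (i) is immediate: since $s_1=s_2$ and $p_1=p_2$, the term-wise Lorentz embedding $L^{p_1,q_1}\hookrightarrow L^{p_1,q_2}$ (valid for $q_1\le q_2$) and the sequence embedding $l^{r_1}\hookrightarrow l^{r_2}$ (valid for $r_1\le r_2$), both recorded in Section 2, together give the desired inclusion of $l^r(L^{p,q})$ norms.

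For (ii), the plan is to reduce to a Lorentz-valued Bernstein inequality: for every $g$ with $\supp \hat g\subset\{|\xi|\sim 2^j\}$,
$$
\|g\|_{L^{p_2,q_2}}\les 2^{jn(1/p_1-1/p_2)}\|g\|_{L^{p_1,q_1}}.
$$
To prove this, let $\tilde\vp_j=\sum_{|k-j|\le 1}\vp_k\ift$, which satisfies $\tilde\vp_j\ast g=g$ whenever $\hat g$ is supported in $\supp\vp_j$, and obeys the scaling $\tilde\vp_j(x)=2^{jn}\tilde\vp_0(2^j x)$. O'Neil's convolution inequality in Lorentz spaces yields $\|\tilde\vp_j\ast g\|_{L^{p_2,q_2}}\les\|\tilde\vp_j\|_{L^{t,1}}\|g\|_{L^{p_1,q_1}}$ with $1/t=1-(1/p_1-1/p_2)$, and a change of variables gives $\|\tilde\vp_j\|_{L^{t,1}}\sim 2^{jn(1/p_1-1/p_2)}$. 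Applying the resulting bound to $g=\Delta_j^\vp f$ and using the scaling constraint $s_1+n/p_1=s_2+n/p_2$, we obtain
$$
2^{js_2}\|\Delta_j^\vp f\|_{L^{p_2,q_2}}\les 2^{js_1}\|\Delta_j^\vp f\|_{L^{p_1,q_1}}
$$
uniformly in $j\in\mathbb{Z}$. Taking the $l^{r_2}$ norm in $j$ and then invoking $l^{r_1}\hookrightarrow l^{r_2}$ finishes the argument.

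The main obstacle lies in the endpoint cases when $p_1=1$ or $p_2=\infty$; the strict inequality $1/p_1>1/p_2$ rules out $p_1=\infty$. If $p_1=1$, the exponent $t$ above equals $p_2$ and O'Neil collapses to the classical Young inequality $\|\tilde\vp_j\ast g\|_{L^{p_2,q_2}}\les\|\tilde\vp_j\|_{L^{p_2,q_2}}\|g\|_{L^1}$, which still gives the correct $2^{jn(1-1/p_2)}$ scaling. If $p_2=\infty$, the standing hypothesis forces $q_2=\infty$, and the pointwise bound $|\tilde\vp_j\ast g(x)|\le\|\tilde\vp_j\|_{L^{p_1',q_1'}}\|g\|_{L^{p_1,q_1}}$ supplied by Lemma \ref{Holder-Lorentz} delivers $2^{jn/p_1}$, again the correct rate. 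Hence the Bernstein step survives in every permitted configuration, and the embedding follows.
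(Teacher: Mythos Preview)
Your Case (i) and the non-endpoint range of Case (ii) are correct; you proceed via O'Neil's convolution inequality, whereas the paper goes through a Bernstein bound into $L^\infty$ (Lemma \ref{lem-hom-B}) followed by real interpolation between $L^{p_1,q_1}$ and $L^\infty$. Both routes are standard and yield the same $2^{jn(1/p_1-1/p_2)}$ factor.

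There is, however, a genuine gap at the endpoint $p_1=1$ when $q_1>1$. The theorem allows any $1\le q_1\le\infty$ when $p_1=1$, so the relevant source quantity is $\|\Delta_j^\vp f\|_{L^{1,q_1}}$, not $\|\Delta_j^\vp f\|_{L^1}$. Your Young step $\|\tilde\vp_j\ast g\|_{L^{p_2,q_2}}\les \|\tilde\vp_j\|_{L^{p_2,q_2}}\|g\|_{L^1}$ therefore bounds the wrong (stronger) norm: since $\|g\|_{L^{1,q_1}}\les\|g\|_{L^1}$, you cannot pass from an $L^1$ bound down to an $L^{1,q_1}$ bound. The same obstruction appears in your $p_2=\infty$ branch when $p_1=1$, $q_1>1$: the H\"older pairing would require $\tilde\vp_j\in L^{\infty,q_1'}$ with $q_1'<\infty$, and that space is $\{0\}$ by the paper's convention. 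No purely convolutional fix is available here, because convolution with a general $L^{1,\infty}$ function is unbounded on $L^p$. What the paper exploits instead is the compact Fourier support of $\Delta_j^\vp f$: Lemma \ref{lem-hom-B} proves $\|g\|_{L^\infty}\les 2^{jn/p_1}\|g\|_{L^{p_1,q_1}}$ for every $1\le p_1<\infty$ and $1\le q_1\le\infty$ via a self-improving argument (multiply by $\vp(\delta\,\cdot)$ to force a finite $L^\infty$ norm, then bootstrap with a three-term H\"older estimate). Interpolating this $L^\infty$ bound against $L^{p_1,q_1}$ then delivers the Bernstein step for all admissible $(p_2,q_2)$, covering exactly the configurations your O'Neil/Young argument misses.
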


\begin{rmk}
It is not difficult to show that   the embedding $\hsb{s_1}{p_1}{q_1}{r_1} \hookrightarrow\hsb{s_2}{p_2}{q_2}{r_2}$ holds only if $s_1 -s_2 = n/p_1- n/p_2 \ge 0 $ and $r_1\le r_2$ (see the proof of Theorem \ref{ebd-hom-TLL}).
\end{rmk}

Using  Theorems  \ref{ebd-hom-TLL} and  \ref{ebd-hom-B}, we can also prove   embedding results between Triebel-Lizorkin-Lorentz spaces and Besov-Lorentz spaces. 

\begin{thm}\label{thm-ebd-FB}
Let $s_1,s_2\in\mathbb{R}$ and $1 <   p_1,p_2 <  \infty$ satisfy
$$
s_1 - s_2 =\frac{n}{p_1}-\frac{n}{p_2}>0.
$$
Then for  $1 \le q , r \le \infty$,
$$
\dot{F}^{s_1,\infty}_{p_1 ,q} \hookrightarrow\dot B^{s_2,q}_{p_2,1} \quad\mbox{and}\quad
\dot{B}^{s_1,r}_{p_1,\infty} \hookrightarrow\dot F^{s_2,1}_{p_2 ,r}.
$$
\end{thm}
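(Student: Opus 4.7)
The plan is to lift the classical Jawerth embeddings of Theorem \ref{thm-ebd-FB-2} to the Lorentz setting by real interpolation, using the sharp Lorentz embeddings of Section 2 to arrange the Lorentz and summation indices on each side. I sketch the argument for the first embedding; the second is handled symmetrically.

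First, I would fix auxiliary exponents $\pi_0,\pi_1 \in (1,\infty)$ with $\pi_0 \neq \pi_1$ and $\theta \in (0,1)$ satisfying $1/p_1 = (1-\theta)/\pi_0 + \theta/\pi_1$, and set $\sigma_i = s_2 + n/\pi_i - n/p_2$. Each pair $(\sigma_i,\pi_i)$ then satisfies the Sobolev relation $\sigma_i - s_2 = n/\pi_i - n/p_2 > 0$, and linearity yields $s_1 = (1-\theta)\sigma_0 + \theta\sigma_1$. By Theorem \ref{thm-ebd-FB-2} we obtain the classical Jawerth embeddings
\[
\dot F^{\sigma_i,\infty}_{\pi_i} \hookrightarrow \dot B^{s_2,\pi_i}_{p_2} \qquad (i=0,1),
\]
and applying the real interpolation functor $(\cdot,\cdot)_{\theta,q}$ preserves continuous embeddings.

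The heart of the proof is the identification of the two interpolated endpoint spaces. On the source side the two endpoints share the Sobolev scale $\sigma_i - n/\pi_i = s_1 - n/p_1$, and a retraction/coretraction via the Littlewood-Paley decomposition -- with a Riesz lift $\Lambda^\alpha$ normalizing smoothness to a common reference value -- combined with Lemma \ref{interpolation-Lp} should identify
\[
(\dot F^{\sigma_0,\infty}_{\pi_0},\, \dot F^{\sigma_1,\infty}_{\pi_1})_{\theta,q} = \hsf{s_1}{p_1}{q}{\infty}.
\]
On the target side the two Besov spaces share $(s_2,p_2)$ and differ only in the outer summation index, so the same $(\cdot,\cdot)_{\theta,q}$ collapses to a Lorentz-sequence Besov norm, which is continuously embedded into $\hsb{s_2}{p_2}{1}{q}$ by the sharp Besov-Lorentz embedding Theorem \ref{ebd-hom-B}, possibly after absorbing an arbitrarily small Sobolev shift to convert the outer Lorentz-sequence index into the inner $L^{p_2,1}$.

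The second embedding $\hsb{s_1}{p_1}{\infty}{r} \hookrightarrow \hsf{s_2}{p_2}{r}{1}$ follows from the same interpolation scheme, now starting from the branch $\dot F^{\sigma_i,\infty}_{\pi_i} \hookrightarrow \dot F^{s_2,1}_{p_2}$ of Theorem \ref{thm-ebd-FB-2} and using Theorem \ref{lem-itp-homf-ihomf} to recognize real interpolation of $\dot F$'s with differing smoothness on the source side as a Besov-Lorentz space, together with Theorem \ref{ebd-hom-TLL} to arrange the target. The main obstacle is the identification of the real interpolation of two homogeneous Triebel-Lizorkin spaces with jointly varying $(s,p)$ but a common Sobolev scale -- a case outside the direct scope of Theorems \ref{lem-itp-inhomf} and \ref{lem-itp-homf-ihomf} -- which requires combining the Riesz lift with Lemma \ref{interpolation-Lp} at the level of the Littlewood-Paley coefficient spaces and verifying that the trade between outer summation and inner Lorentz integrability on the target side is consistent with the sharp embeddings of Theorem \ref{ebd-hom-B}.
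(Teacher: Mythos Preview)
Your scheme has a genuine gap at exactly the step you flag as the main obstacle. The identification
\[
(\dot F^{\sigma_0,\infty}_{\pi_0},\, \dot F^{\sigma_1,\infty}_{\pi_1})_{\theta,q} = \hsf{s_1}{p_1}{q}{\infty}
\]
with \emph{both} the smoothness indices $\sigma_i$ and the integrability indices $\pi_i$ varying simultaneously is not covered by any result in the paper, and your proposed fix does not work: a single Riesz lift $\Lambda^\alpha$ shifts smoothness uniformly on both endpoints, so it cannot send the pair $(\sigma_0,\sigma_1)$ to a common value unless $\sigma_0=\sigma_1$. Likewise the retraction onto $L^p(l^\infty)$ via Littlewood--Paley coefficients, combined with Lemma~\ref{interpolation-Lp}, only identifies the interpolation space when the two endpoints carry the \emph{same} smoothness weight $2^{js}$; with distinct $\sigma_0\neq\sigma_1$ the coefficient spaces are weighted differently and Lemma~\ref{interpolation-Lp} does not apply. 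On the target side you face a second difficulty: $(\dot B^{s_2,\pi_0}_{p_2}, \dot B^{s_2,\pi_1}_{p_2})_{\theta,q}$ is interpolation of Besov spaces with the \emph{same} smoothness and integrability, so Theorem~\ref{lem-itp-homb} (which requires $s_1\neq s_2$) does not apply either; the result is a space with a Lorentz outer sequence index, and its embedding into $\hsb{s_2}{p_2}{1}{q}$ is not provided by Theorem~\ref{ebd-hom-B}.

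The paper sidesteps both issues by \emph{decoupling} the two parameters. For the first embedding it picks $\overline p_1<p_1<\overline p_2<p_2$ with $2/p_1=1/\overline p_1+1/\overline p_2$ and interpolates $\dot F$ spaces at \emph{fixed} smoothness $s_1$ via Theorem~\ref{lem-itp-inhomf}, obtaining $\hsf{s_1}{p_1}{q}{\infty}=(\dot F^{s_1,\infty}_{\overline p_1},\dot F^{s_1,\infty}_{\overline p_2})_{1/2,q}$ directly. It then passes to $\dot B$ spaces, uses the Besov Sobolev embedding (Theorem~\ref{ebd-hom-B}) to bring both endpoints to the common integrability $\overline p_2$, and finally interpolates Besov spaces at \emph{fixed} $\overline p_2$ with varying smoothness via Theorem~\ref{lem-itp-homb}. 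The second embedding is handled by the mirror manoeuvre. In short: rather than interpolating along the critical line, the paper always interpolates with one of $s,p$ frozen, which is precisely the scope of Theorems~\ref{lem-itp-inhomf} and~\ref{lem-itp-homb}.
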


\begin{proof}
Let $\overline{p}_1$ and $\overline{p}_2$ be any numbers such that
$$
1<\overline{p}_1 <p_1 < \overline{p}_2 < p_2 \quad\mbox{and}\quad \frac{2}{p_1} =\frac{1}{\overline{p}_1} + \frac{1}{\overline{p}_2}.
$$
Then by  Theorem  \ref{lem-itp-inhomf}, Lemma \ref{thm-ebd-FB and BF}, Theorems     \ref{ebd-hom-B}, and \ref{lem-itp-homb}, we have
\begin{align*}
\hsf{s_1}{p_1}{q}{\infty}  =\left (\dot F^{s_1,\infty}_{\overline{p}_1}, \dot F^{s_1,\infty}_{\overline{p}_2}\right)_{1/2,q} &\hookrightarrow\left (\dot B^{s_1,\infty}_{\overline{p}_1}, \dot B^{s_1,\infty}_{\overline{p}_2}\right)_{1/2,q} \\
& \hookrightarrow \left  (\dot B^{s_1 -n/\overline{p}_1+n/\overline{p}_2 ,\infty}_{\overline{p}_2}, \dot B^{s_1,\infty}_{\overline{p}_2} \right)_{1/2,q}\\
&	 = \dot B^{s_1-n(1/\overline{p}_1-1/\overline{p}_2)/2, q}_{\overline{p}_2}\hookrightarrow \dot B^{s_2,q}_{p_2,1} .
\end{align*}
Similarly, if $p_3$ is chosen so that
$$
  \overline{p}_2 < p_2 <  {p}_3 < \infty\quad\mbox{and}\quad \frac{2}{p_2} =\frac{1}{\overline{p}_2} + \frac{1}{ {p}_3},
$$
then  by Theorems \ref{lem-itp-homf-ihomf} and  \ref{ebd-hom-TLL},
\begin{align*}
\dot{B}^{s_1,r}_{p_1,\infty}   \hookrightarrow \dot B^{s_2-n/p_2+n/\overline{p}_2,r}_{\overline{p}_2}
 &= \left  (\dot F^{s_2 ,1}_{\overline{p}_2}, \dot F^{s_2 - 2n/p_2 + 2n/\overline{p}_2,1}_{\overline{p}_2} \right)_{1/2,r} \\
 & \hookrightarrow
 \left (\dot F^{s_2,1}_{\overline{p}_2}, \dot F^{s_2,1}_{ {p}_3}\right)_{1/2,r}
	 \hookrightarrow \hsf{s_2}{p_2}{r}{1} . \tag*{\qedhere}
\end{align*}
\end{proof}

\section{Interpolation inequalities in Triebel-Lizorkin-Lorentz spaces}

Throughout this and next sections, let   $s , s_1 , s_2 \in \mathbb{R}$, $1 \le p,p_1,p_2 , q,q_1,q_2,  r,r_1,r_2 \le \infty$, and $0<\theta<1$ be fixed numbers. Assume in addition that $q=\infty$ if $p=\infty$ and that $q_i=\infty$ if $p_i=\infty$ for $i=1,2$.

\begin{rmk}\label{rmk-conditions}
The condition $s_*-s\ge n/p_* - n/p \ge 0$
 is necessary for each of the interpolation inequalities \eqref{itp-eq-inhomf} and  (\ref{itp-eq-inhomb}) to hold. To show this,  we adapt    the proof of \cite[Theorem 2.3.9]{Tri1}.  First we choose $\psi_0\in C_c^\infty(\rn)$ such that $\supp \psi_0 \subset \{|\xi|\le 3/2\}$ and $\psi_0=1$ on $\{|\xi|\le 1\}$. For $j \ge 1$, define $\psi_j(\xi)= \psi_0(2^{-j}\xi) - \psi_0(2^{-j+1}\xi)$. Then $\{\psi\}_{j\in\mathbb{N}_0}$ satisfies \eqref{cond1-inhom}, \eqref{cond2-inhom}, and \eqref{cond3-inhom}. Note that $\supp \psi_j \subset \{ 2^{j-1} \le |\xi|\le 3 \cdot 2^{j-1}\}$ and $\psi_j=1$ on $\{3\cdot 2^{j-2} \le |\xi| \le 2^j\}$ when $j\ge1$. Let $f\in\scc$ be chosen so that $\supp \hat{f}\subset \{a\le |\xi|\le b\}$ for some $3/4<a<b<1$. Then for each $k\in\mathbb{Z}$,
\begin{equation*}
\psi_j\widehat{f(2^k \cdot)}=
\begin{cases}
 \widehat{f(2^k \cdot)} &\text{if $k\le0, j=0$ or $k=j\ge1$,}\\
	0 &\text{otherwise.}
 \end{cases}
\end{equation*}
Thus for $s\in\mathbb{R}$, $1\le p\le\infty$, and $1\le q,r\le\infty$,
\begin{equation*}
\|f(2^k \cdot)\|_{B^{s,r}_{p,q}} =
\begin{cases}
2^{-kn/p}\|f\|_{L^{p,q}} &\text{if $k\le0$,}\\
2^{k(s-n/p)}\|f\|_{L^{p,q}} &\text{if $k\ge1$.}
 \end{cases}
\end{equation*}
In addition, if $1\le p<\infty$, then $\|f(2^k \cdot)\|_{F^{s,r}_{p,q}} =\|f(2^k \cdot)\|_{B^{s,r}_{p,q}}$.
Hence if \eqref{itp-eq-inhomf}  or (\ref{itp-eq-inhomb}) holds, then we must have
$$
2^{-kn/p} \les 2^{-kn/p_*} \quad \text{for $k\le 0$}
\quad\mbox{and}\quad
2^{k(s-n/p)} \les 2^{k(s_* - n/p_*)} \quad \text{for $k\ge1$},
$$
which imply that $s_*-s\ge n/p_* - n/p \ge 0$.
Similarly, it can be shown  that if \eqref{itp-eq-homf} or (\ref{itp-eq-homb}) holds, then $s_*-s = n/p_* - n/p \ge 0$ (see the proof of Theorem \ref{ebd-hom-TLL} in Appendix).
\end{rmk}

\begin{thm}\label{thm-itp-inhomf}
Assume that
\begin{equation*}
s_* -s \ge \frac{n}{p_*}-\frac{n}{p} \ge 0 .
\end{equation*}
Assume in addition that $ 1 \le p , p_1 , p_2 < \infty $.  Then the interpolation inequality
\[
\|f\|_{\ihsf{s}{p}{q}{r}}\lesssim \|f\|_{\ihsf{s_1}{p_1}{q_1}{r_1}}^{1-\theta}\|f\|_{\ihsf{s_2}{p_2}{q_2}{r_2}}^\theta
\]
holds for all $f \in\ihsf{s_1}{p_1}{q_1}{r_1} \cap \ihsf{s_2}{p_2}{q_2}{r_2}$, if one of the following conditions is satisfied:
\begin{enumerate}[label = \textup{(\roman*)}]
\item  $q_* \le q$, $r_* \le r$.
\item $ s_1=s_2$, $p_* = p$, $p_1 \neq p_2$, $\max (r_1 , r_2 ) \le r$.
\item    $s_1 \neq s_2$, $p_* = p$, $q_* \le q$.
\item $s_* >  s $,   $q_* \le q$.
\item $s_* >  s$, $p_*=p$, $p_1\neq p_2$.
\item  $s_* -s > n/p_* -n/p >0$.
\item  $s_* -s  = n/p_* -n/p >0$, $s_2 -s_1 \neq n/p_2 -n/p_1$.
\end{enumerate}
\end{thm}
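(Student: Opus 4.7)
The plan is to treat the seven sufficient conditions separately via a common template: bound some auxiliary Triebel-Lizorkin-Lorentz norm $\|f\|_{\ihsf{s'}{p'}{q'}{r'}}$ by the product on the right, and then pass to $\|f\|_{\ihsf{s}{p}{q}{r}}$ via the Jawerth-Seeger-Trebels embedding (Theorem \ref{ebd-inhom-TLL}). This reduces the whole theorem, case by case, to checking the hypotheses against the four sufficient embedding conditions of Theorem \ref{ebd-inhom-TLL}. The auxiliary bounds come from two sources: pointwise Hölder factorization combined with Hölder in $\ell^{r}$ and in $L^{p,q}$ (Lemma \ref{Holder-Lorentz}), and real interpolation at fixed smoothness (Lemma \ref{interpolation-Lp}).

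Cases (i), (iv), and (vi) are handled by the Hölder route. Starting from the pointwise factorization $2^{js_*}|\lpo{\psi}{f}|=(2^{js_1}|\lpo{\psi}{f}|)^{1-\theta}(2^{js_2}|\lpo{\psi}{f}|)^{\theta}$, Hölder in $\ell^{r_*}$ (using $1/r_*=(1-\theta)/r_1+\theta/r_2$) and Hölder in $L^{p_*,q'}$ for a suitable $q'\ge q_*$ yield
\[
\|f\|_{\ihsf{s_*}{p_*}{q'}{r_*}}\les\|f\|_{\ihsf{s_1}{p_1}{q_1}{r_1}}^{1-\theta}\|f\|_{\ihsf{s_2}{p_2}{q_2}{r_2}}^{\theta}.
\]
Taking $q'=q$ for (i) and (iv), or $q'=q_*$ for (vi), the embedding $\ihsf{s_*}{p_*}{q'}{r_*}\hookrightarrow\ihsf{s}{p}{q}{r}$ falls under one of the conditions (i)-(iv) of Theorem \ref{ebd-inhom-TLL}, according to whether $s_*=s$, $p_*=p$ hold as equalities or strictly. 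In particular, case (vi) invokes condition (iv) of Theorem \ref{ebd-inhom-TLL} unconditionally, thanks to $s_*-s>n/p_*-n/p>0$.

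Cases (ii) and (v) exploit $p_1\neq p_2$ with $p_*=p$ via real interpolation of linear operators. Using the identification $\ihsf{s'}{p'}{q'}{r}\cong L^{p',q'}(\ell^r)$ at fixed $s'$, $r$ together with $(L^{p_1,q_1}(\ell^r),L^{p_2,q_2}(\ell^r))_{\theta,q}=L^{p,q}(\ell^r)$ from Lemma \ref{interpolation-Lp}, the basic interpolation estimate gives
\[
\|f\|_{\ihsf{s'}{p}{q}{r}}\les\|f\|_{\ihsf{s'}{p_1}{q_1}{r}}^{1-\theta}\|f\|_{\ihsf{s'}{p_2}{q_2}{r}}^{\theta}.
\]
Case (ii) takes $s'=s_1=s_2$ and combines with $\ihsf{s_1}{p_i}{q_i}{r_i}\hookrightarrow\ihsf{s_1}{p_i}{q_i}{r}$ from Theorem \ref{ebd-inhom-TLL}(i) (valid since $r_i\le r$); case (v) exploits the slack $s_*>s$ to pick $s'$ strictly between $s$ and $\min(s_1,s_2)$, embedding via Theorem \ref{ebd-inhom-TLL}(ii), and resorting to a preliminary Sobolev embedding at one of the endpoints should $\min(s_1,s_2)\le s$ occur. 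Case (iii), reduced to the essential subcase $s_*=s$ (the $s_*>s$ subcase is absorbed by case (iv)), requires a more delicate real-interpolation argument exploiting $s_1\neq s_2$: combining Theorems \ref{lem-itp-inhomf}, \ref{lem-itp-homb}, and \ref{lem-itp-homf-ihomf} produces a Besov-Lorentz intermediate space that then embeds into $\ihsf{s}{p}{q}{r}$ by Lemma \ref{thm-ebd-FB and BF}.

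Case (vii), with $s_*-s=n/p_*-n/p>0$ and $s_2-s_1\neq n/p_2-n/p_1$, is the subtlest and I expect to be the main obstacle. I would reduce it to case (vi) by a perturbation: the non-colinearity hypothesis states that $(1/p_1,s_1)$ and $(1/p_2,s_2)$ do not lie on a common Sobolev line, so one can choose auxiliary $(\tilde s_i,\tilde p_i)$ close to $(s_i,p_i)$ admitting $\ihsf{s_i}{p_i}{q_i}{r_i}\hookrightarrow\ihsf{\tilde s_i}{\tilde p_i}{q_i}{\tilde r_i}$ via Theorem \ref{ebd-inhom-TLL}(iii), such that the affine combination $(\tilde s_*,\tilde p_*)$ satisfies $\tilde s_*-s>n/\tilde p_*-n/p>0$; case (vi) then applies. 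The principal difficulty is realizing both endpoint perturbations simultaneously while respecting the embedding conditions, which is precisely what the transverse direction provided by $s_2-s_1\neq n/p_2-n/p_1$ makes possible.
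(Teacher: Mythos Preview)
Your treatment of (i), (ii), (iv), (vi) matches the paper's. There are genuine gaps in (iii), (v), and (vii).

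For (iii) with $s_*=s$, the real-interpolation sketch does not go through: Theorems \ref{lem-itp-homb} and \ref{lem-itp-homf-ihomf} require $p_1=p_2$, Theorem \ref{lem-itp-inhomf} fixes the smoothness index, and none of them identifies $(F^{s_1,r_1}_{p_1,q_1},F^{s_2,r_2}_{p_2,q_2})_{\theta,1}$ in the Lorentz setting with $s_1\neq s_2$. The missing tool is the pointwise sequence estimate of Lemma \ref{itp-sum}: for $s_1\neq s_2$,
\[
\sum_{j} 2^{js_*}|a_j|\les\Bigl(\sup_j 2^{js_1}|a_j|\Bigr)^{1-\theta}\Bigl(\sup_j 2^{js_2}|a_j|\Bigr)^{\theta},
\]
applied with $a_j=\Delta_j^\psi f(x)$ at each $x$ and then followed by H\"older in $L^{p,q}$. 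This delivers $r=1$ on the left from $r_i=\infty$ on the right directly, with no appeal to abstract interpolation.

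For (v), choosing $s'$ with $s<s'<\min(s_1,s_2)$ is impossible once $\min(s_1,s_2)\le s$ (take $s_1=0$, $s_2=10$, $\theta=1/2$, $s=3$), and every embedding in Theorem \ref{ebd-inhom-TLL} lowers smoothness, so the suggested endpoint repair cannot work. For (vii), perturbing the endpoints along Sobolev lines via Theorem \ref{ebd-inhom-TLL}(iii) preserves $\tilde s_*-s=n/\tilde p_*-n/p$ exactly (a one-line computation), so you never reach case (vi); using Theorem \ref{ebd-inhom-TLL}(iv) instead makes $\tilde s_*-s-(n/\tilde p_*-n/p)$ strictly negative. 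In both (v) and (vii) the paper perturbs the \emph{target} integrability rather than the endpoints: choose $\overline p,\tilde p$ near $p$, apply case (iv) at each (with second Lorentz index $\infty$, so $q_*\le q$ is automatic) using adjusted parameters $\lambda,\mu\in(0,1)$, and then recombine at $p$ via case (ii). In (vii) the hypothesis $s_2-s_1\neq n/p_2-n/p_1$ enters precisely as the nonzero denominator that makes $\lambda,\mu$ well-defined---your transversality intuition is correct, but it is the target, not the endpoints, that must move.
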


To prove our main results, we need the following interpolation inequality in sequence spaces, taken from {\cite[Theorem 1.18.2]{Tri2}}.

\begin{lem}\label{itp-sum}
Assume that   $s_1 \neq s_2$.
Then for any complex sequence $\{a_j\}_{j\in\Gamma}$,
$$
\sum_{j\in\Gamma} 2^{js_*}|a_j| \les  \left(\sup_{j\in\Gamma} 2^{js_1}|a_j|\right)^{1-\theta}  \left( \sup_{j\in\Gamma} 2^{js_2}|a_j|\right)^{\theta}.
$$
\end{lem}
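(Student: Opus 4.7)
The plan is to prove this via a one-parameter split of the index set and the fact that both pieces become geometric series. Let $A = \sup_{j\in\Gamma} 2^{js_1}|a_j|$ and $B = \sup_{j\in\Gamma} 2^{js_2}|a_j|$, and assume these are both finite and positive (the other cases being trivial: if one is infinite the bound is vacuous, and if one is zero then $a_j \equiv 0$). By interchanging the roles of $(s_1,1-\theta)$ and $(s_2,\theta)$ if necessary, I would assume without loss of generality that $s_1 < s_2$, and set $\delta = s_2 - s_1 > 0$. Note that $s_* - s_1 = \theta\delta$ and $s_* - s_2 = -(1-\theta)\delta$.

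The two defining bounds $|a_j| \le A\, 2^{-j s_1}$ and $|a_j| \le B\, 2^{-j s_2}$ translate, after multiplying by $2^{js_*}$, to
\[
2^{js_*}|a_j| \le A\, 2^{j\theta\delta} \qquad\text{and}\qquad 2^{js_*}|a_j| \le B\, 2^{-j(1-\theta)\delta}.
\]
The first bound is small for $j \to -\infty$ and the second for $j\to+\infty$, with crossover near $j_0 = \delta^{-1}\log_2(B/A)$. I would choose $k \in \mathbb{Z}$ to be the unique integer with $k \le j_0 < k+1$, so that $2^{k\delta} \le B/A < 2^{(k+1)\delta}$. Splitting the sum at $k$, I would use the first bound on $\{j \le k\}$ and the second on $\{j \ge k+1\}$.

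Summing the two geometric series yields
\[
\sum_{j\le k} A\, 2^{j\theta\delta} \le \frac{A\, 2^{k\theta\delta}}{1 - 2^{-\theta\delta}},\qquad \sum_{j\ge k+1} B\, 2^{-j(1-\theta)\delta} \le \frac{B\, 2^{-(k+1)(1-\theta)\delta}}{1 - 2^{-(1-\theta)\delta}}.
\]
Since $2^{k\delta} \le B/A$, the first numerator is at most $A(B/A)^\theta = A^{1-\theta}B^\theta$, and since $2^{(k+1)\delta} > B/A$ the second numerator is at most $B(B/A)^{-(1-\theta)} = A^{1-\theta}B^\theta$. Adding the two pieces gives the desired bound
\[
\sum_{j\in\Gamma} 2^{js_*}|a_j| \le C_{\theta,\delta}\, A^{1-\theta} B^\theta,
\]
with $C_{\theta,\delta} = (1-2^{-\theta\delta})^{-1} + (1-2^{-(1-\theta)\delta})^{-1}$ depending only on $\theta$ and $s_2-s_1$.

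There is no real obstacle here; the only subtleties are the degenerate cases ($A=0$, $B=0$, or either infinite), the reduction to $s_1 < s_2$, and the requirement that the splitting index $k$ actually lie in $\Gamma$ when $\Gamma \subsetneq \mathbb{Z}$ — which is handled by taking $k = \max(\min\Gamma - 1, \lfloor j_0\rfloor)$ if $\Gamma = \mathbb{N}_0$, so that one of the two pieces is simply empty whenever $j_0$ falls outside $\Gamma$. The hypothesis $s_1 \neq s_2$ is used precisely through $\delta > 0$, which is what makes both geometric ratios $2^{-\theta\delta}$ and $2^{-(1-\theta)\delta}$ strictly less than $1$.
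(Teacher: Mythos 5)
Your argument is correct: the two pointwise bounds $2^{js_*}|a_j|\le A\,2^{j\theta\delta}$ and $2^{js_*}|a_j|\le B\,2^{-j(1-\theta)\delta}$, the split at the crossover index with $2^{k\delta}\le B/A<2^{(k+1)\delta}$, and the two geometric sums give exactly the constant $(1-2^{-\theta\delta})^{-1}+(1-2^{-(1-\theta)\delta})^{-1}$ and the bound $A^{1-\theta}B^{\theta}$, with the degenerate cases and the reduction to $s_1<s_2$ handled properly. The paper offers no proof of this lemma, quoting it from Triebel's interpolation book (Theorem 1.18.2), and your splitting-at-the-crossover computation is precisely the standard proof of that statement (in effect the embedding of the real interpolation space $(\,l^\infty_{s_1},l^\infty_{s_2})_{\theta,1}$ into the weighted $l^1$ space), so in substance it is the same approach, just made self-contained; one small simplification is that the worry about the splitting index lying in $\Gamma$ is unnecessary, since you may bound $\sum_{j\in\Gamma,\,j\le k}$ and $\sum_{j\in\Gamma,\,j>k}$ by the corresponding full sums over $\mathbb{Z}$.
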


\begin{proof}[Proof of Theorem \ref{thm-itp-inhomf}]

Assume that $f\in \ihsf{s_1}{p_1}{q_1}{r_1} \cap \ihsf{s_2}{p_2}{q_2}{r_2}$. \vspace{0.2cm}

\noindent \emph{Case I.} Suppose that   $p_* =p$, $q_* \le q$,  and $r_* \le r$. Then noting that
$$
2^{js_*}|\lpo{\psi}{f}| = 2^{(1-\theta)js_1}|\lpo{\psi}{f}|^{1-\theta} 2^{\theta js_2}|\lpo{\psi}{f}|^\theta,
$$
we have
$$
\left \| \left\{ 2^{js_*}\Delta_j^\psi f \right\}_{j\in\mathbb{N}_0}\right \|_{l^r} \le \left \| \left\{2^{(1-\theta)js_1}|\Delta_j^\psi f |^{1-\theta}\right\}_{j\in\mathbb{N}_0}  \right \|_{l^{\frac{r_1}{1-\theta}}} \left \|  \left\{2^{\theta js_2}| \Delta_j^\psi f |^\theta \right\}_{j\in\mathbb{N}_0} \right \|_{l^{\frac{r_2}{\theta}}}.
$$
Hence by Theorem \ref{ebd-inhom-TLL} and Lemma \ref{Holder-Lorentz},
\begin{align*}
\|f\|_{\ihsf{s}{p}{q}{r}} & \les \|f\|_{\ihsf{s_*}{p}{q}{r}} = \inhomnorm{s_*}{p}{q}{r}{\psi}{f}\\
	&\les \left\| \left\{ 2^{(1-\theta) js_1}|\Delta_j^\psi f|^{1-\theta}\right\}_{j\in\mathbb{N}_0} \right\|_{L^{\frac{p_1}{1-\theta},\frac{q_1}{1-\theta}}(l^{\frac{r_1}{1-\theta}})} \left\| \left\{ 2^{\theta js_2}|\Delta_j^\psi f|^{\theta}\right\} _{j\in\mathbb{N}_0} \right\|_{L^{\frac{p_2}{\theta},\frac{q_2}{\theta}}(l^{\frac{r_2}{\theta}})}\\
	&= \inhomnorm{s_1}{p_1}{q_1}{r_1}{\psi}{f}^{1-\theta} \inhomnorm{s_2}{p_2}{q_2}{r_2}{\psi}{f}^\theta \\
	& = \|f\|^{1-\theta}_{\ihsf{s_1}{p_1}{q_1}{r_1}} \|f\|^\theta_{\ihsf{s_2}{p_2}{q_2}{r_2}}.
\end{align*}

\noindent \emph{Case II.} Suppose that $ s_1=s_2$, $p_* = p$, $p_1 \neq p_2$, and $\max (r_1 , r_2 ) \le r$. Then by Theorem \ref{ebd-inhom-TLL} and Lemma \ref{interpolation-Lp},
\begin{align*}
\|f\|_{\ihsf{s}{p}{q}{r}} & \les  \inhomnorm{s_*}{p}{q}{r}{\psi}{f} \\
	& \les \inhomnorm{s_*}{p_1}{q_1}{r}{\psi}{f}^{1-\theta} \inhomnorm{s_*}{p_2}{q_2}{r}{\psi}{f}^{\theta}\\
	& \le  \|f\|_{\ihsf{s_1}{p_1}{q_1}{r_1}} ^{1-\theta} \|f\|_{\ihsf{s_2}{p_2}{q_2}{r_2}}^\theta.
\end{align*}

\noindent \emph{Case III.} Suppose that $s_1 \neq s_2$, $p_* =p$, and $q_* \le q$.
Then by Theorem \ref{ebd-inhom-TLL}, Lemmas \ref{itp-sum} and  \ref{Holder-Lorentz},
\begin{align*}
\|f\|_{\ihsf{s}{p}{q}{r}} &\les \inhomnorm{s_*}{p}{q}{1}{\psi}{f}\\
	&\les \left\| \left(\sup_{j\in\mathbb{N}_0} 2^{js_1}|\lpo{\psi}{f}|\right)^{1-\theta} \left( \sup_{j\in\mathbb{N}_0} 2^{js_2}|\lpo{\psi}{f}|\right)^{\theta}\right\|_{L^{p,q}}\\
	&\les \left\| \left(\sup_{j\in\mathbb{N}_0} 2^{js_1}|\lpo{\psi}{f}|\right)^{1-\theta} \right\|_{L^{\frac{p_1}{1-\theta},\frac{q_1}{1-\theta}}}\left\|\left( \sup_{j\in\mathbb{N}_0} 2^{js_2}|\lpo{\psi}{f}|\right)^{\theta} \right\|_{L^{\frac{p_2}{\theta},\frac{q_2}{\theta}}}\\
	& \le \|f\|^{1-\theta}_{\ihsf{s_1}{p_1}{q_1}{r_1}} \|f\|^\theta_{\ihsf{s_2}{p_2}{q_2}{r_2}}.
\end{align*}

\noindent \emph{Case IV.} Suppose that $s_* >  s $ and    $q_* \le q$.
Then by Case I,
$$
\|f\|_{\ihsf{s_*}{p_*}{q}{\infty}} \les  \|f\|_{\ihsf{s_1}{p_1}{q_1}{r_1}}^{1-\theta}
\|f\|_{\ihsf{s_2}{p_2}{q_2}{r_2}}^\theta.
$$
Since $s_* >s$ and $s_* -s \ge n/p_* -n/p$, it follows from  Theorem \ref{ebd-inhom-TLL} that
\begin{align*}
\|f\|_{\ihsf{s}{p}{q}{r}}\les  \|f\|_{\ihsf{s_*}{p_*}{q}{\infty}} \les \|f\|_{\ihsf{s_1}{p_1}{q_1}{r_1}}^{1-\theta}\|f\|_{\ihsf{s_2}{p_2}{q_2}{r_2}}^\theta.
\end{align*}
Sufficiency of the condition (i) is proved by Cases I and IV.

\noindent \emph{Case V.}
Suppose that $s_* >  s$,  $p_* =p$, and $p_1 \neq p_2$. We may assume that $p_1<p<p_2$. Then since
$$
0<\theta= \frac{1/p_1-1/p}{1/p_1-1/p_2}<1,
$$
there exists a number $\overline{p}_1$, close to $p$, such that
$$
p_1<\overline{p}_1<p<\overline{p}_2:=\frac{p\overline{p}_1}{2\overline{p}_1-p}<p_2,
$$
$$
0<\mu := \frac{1/p_1-1/\overline{p}_1}{1/p_1-1/p_2}<\theta, \quad\theta<\lambda:=2\theta - \mu <1,
$$
$$
(1-\mu)s_1+\mu s_2 -s>0, \quad\text{and}\quad (1-\lambda )s_1 + \lambda s_2 -s >0.
$$
Note that
$$
\frac{2}{p}=\frac{1}{\overline{p}_1}+\frac{1}{\overline{p}_2},
$$
$$
(1-\mu)s_1 + \mu s_2 - s > n \left( \frac{1-\mu}{p_1}+\frac{\mu}{p_2}-\frac{1}{\overline{p}_1}\right)=0,
$$
and
$$
(1-\lambda)s_1 + \lambda s_2 - s > n \left( \frac{1-\lambda}{p_1}+\frac{\lambda}{p_2}-\frac{1}{\overline{p}_2}\right)=0.
$$
Hence by Cases II and IV,
\begin{align*}
\|f\|_{\ihsf{s}{p}{q}{r}} &\les \|f\|_{\ihsf{s}{\overline{p}_1}{\infty}{1}}^{1/2}\|f\|_{\ihsf{s}{\overline{p}_2}{\infty}{1}}^{1/2}\\
	& \les\left(\|f\|_{\ihsf{s_1}{p_1}{q_1}{r_1}}^{1-\mu}\|f\|_{\ihsf{s_2}{p_2}{q_2}{r_2}}^\mu \right)^{1/2}  \left(\|f\|_{\ihsf{s_1}{p_1}{q_1}{r_1}}^{1-\lambda}\|f\|_{\ihsf{s_2}{p_2}{q_2}{r_2}}^\lambda\right)^{1/2} \\
	&= \|f\|_{\ihsf{s_1}{p_1}{q_1}{r_1}}^{1-\theta}\|f\|_{\ihsf{s_2}{p_2}{q_2}{r_2}}^\theta.
\end{align*}

\noindent \emph{Case VI.} Suppose that $s_* -s > n/p_* -n/p >0$. Then
by Theorem \ref{ebd-inhom-TLL} and  Case IV,
$$\|f\|_{\ihsf{s}{p}{q}{r}} \les \|f\|_{\ihsf{s_{*}}{p_{*}}{\infty}{\infty}} \les \|f\|_{\ihsf{s_1}{p_1}{q_1}{r_1}}^{1-\theta}
\|f\|_{\ihsf{s_2}{p_2}{q_2}{r_2}}^{\theta}.
$$

\noindent \emph{Case VII.}
Suppose that $s_* -s  = n/p_* -n/p >0$ and $s_2 -s_1 \neq n/p_2 -n/p_1$.
Then since
$$
 0<\theta =  \frac{s-n/p -s_1 + n/p_1}{s_2 -n/p_2 -s_1 + n/p_1}<1 \quad \mbox{and}\quad \frac{1}{p}< \frac{1-\theta}{p_1}+\frac{\theta}{p_2} \le 1 ,
$$
there exist     numbers  $\overline{p}  >1$ and $\tilde{p} >1$,  very close to $p$,  such that
$$
0<\lambda := \frac{s- n/\overline{p}-s_1 + n/p_1}{s_2 -n/p_2 -s_1 + n/p_1}<\theta
$$
$$
\theta < \mu := \frac{s- n /\tilde{p}-s_1 + n/p_1}{s_2 -n/p_2 -s_1 + n/p_1}<1,
$$
$$
\frac{1}{\overline{p}} < \frac{1-\lambda}{p_1} + \frac{\lambda}{p_2}, \quad\text{and}\quad
\frac{1}{\tilde{p}} < \frac{1-\mu}{p_1} + \frac{\mu}{p_2}.
$$
Note that
\[
(1-\lambda)s_1+ \lambda s_2 -s = n \left( \frac{1-\lambda}{p_1} + \frac{\lambda}{p_2}  - \frac{1}{\overline{p}}  \right)>0
\]
and
\[
(1-\mu)s_1+ \mu s_2 -s = n \left( \frac{1-\mu}{p_1} + \frac{\mu}{p_2}  - \frac{1}{\tilde{p}}  \right)>0 .
\]
Hence by Case IV, we have
\begin{equation*}
 \|f\|_{\ihsf{s}{\overline{p}}{\infty}{r}} \les \|f\|_{\ihsf{s_1}{p_1}{q_1}{r_1}}^{1-\lambda} \|f\|_{\ihsf{s_2}{p_2}{q_2}{r_2}}^\lambda
\quad\mbox{and}\quad
\|f\|_{\ihsf{s}{\tilde{p}}{\infty}{r}} \les \|f\|_{\ihsf{s_1}{p_1}{q_1}{r_1}}^{1-\mu} \|f\|_{\ihsf{s_2}{p_2}{q_2}{r_2}}^\mu.
\end{equation*}
Define
$$
\nu=\frac{\theta -\lambda}{\mu-\lambda}.
$$
Then
$$
0<\nu<1,\quad   \frac{1}{p}=\frac{1-\nu}{\overline{p}}+\frac{\nu}{\tilde{p}}, \quad\text{and}\quad \overline{p}\neq \tilde{p} .
$$
Therefore by Case II, we get
\begin{align*}
\|f\|_{\ihsf{s}{p}{q}{r}} & \les \|f\|_{\ihsf{s}{\overline{p}}{\infty}{r}}^{1-\nu} \|f\|_{\ihsf{s}{\tilde{p}}{\infty}{r}}^{\nu} \\
	& \les \left( \|f\|_{\ihsf{s_1}{p_1}{q_1}{r_1}}^{1-\lambda} \|f\|_{\ihsf{s_2}{p_2}{q_2}{r_2}}^\lambda \right)^{1-\nu} \left( \|f\|_{\ihsf{s_1}{p_1}{q_1}{r_1}}^{1-\mu} \|f\|_{\ihsf{s_2}{p_2}{q_2}{r_2}}^\mu \right)^\nu \\
	& = \|f\|_{\ihsf{s_1}{p_1}{q_1}{r_1}}^{1-\theta} \|f\|_{\ihsf{s_2}{p_2}{q_2}{r_2}}^{\theta}.
\end{align*}
We have  completed the whole proof of the theorem.
\end{proof}

The  following is an immediate consequence of   Remark \ref{rmk-conditions} and Theorem \ref{thm-itp-inhomf}.
\begin{thm}\label{thm-necessity-00}
Let   $s , s_1 , s_2 \in \mathbb{R}$, $1 \le p,   p_1,p_2 < \infty$, $1 \le   q_1 , q_2 , r_1 , r_2 \le \infty$, and $0<\theta<1$.
Then the interpolation inequality
\[
\|f\|_{\ihsf{s}{p}{q_*}{r_*}}\lesssim \|f\|_{\ihsf{s_1}{p_1}{q_1}{r_1}}^{1-\theta}\|f\|_{\ihsf{s_2}{p_2}{q_2}{r_2}}^\theta
\]
holds for all  $f\in F^{s_1,r_1}_{p_1,q_1} \cap F^{s_2,r_2}_{p_2,q_2}$, if and only if $$
s_* -s  \ge \frac{n}{p_*} -\frac{n}{p}\ge 0.
$$
\end{thm}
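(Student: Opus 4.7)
The statement is essentially a packaging of two facts that have already been established in the excerpt, so the plan is short and the main task is to verify that each direction matches a case already handled, with $q = q_*$ and $r = r_*$ in the role of the free parameters.

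For the sufficiency direction, I would simply invoke Theorem \ref{thm-itp-inhomf} with the specific choices $q = q_*$ and $r = r_*$. Then condition (i) of that theorem, namely $q_* \le q$ and $r_* \le r$, holds trivially with equality. Combined with the assumed inequality $s_* - s \ge n/p_* - n/p \ge 0$ and the standing hypothesis $1 \le p, p_1, p_2 < \infty$, all the hypotheses of Theorem \ref{thm-itp-inhomf}(i) are satisfied, and the interpolation inequality follows immediately.

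For the necessity direction, I would appeal to Remark \ref{rmk-conditions}, which shows by testing the inequality against the dilates $f(2^k \cdot)$ of a Schwartz function $f$ whose Fourier transform is supported in an annulus $\{a \le |\xi| \le b\}$ with $3/4 < a < b < 1$ that any interpolation inequality of the form \eqref{itp-eq-inhomf} — regardless of the choice of $q$, $r$ on the left — forces $s_* - s \ge n/p_* - n/p \ge 0$. The argument in the remark uses only the scaling behaviour $\|f(2^k \cdot)\|_{F^{s,r}_{p,q}} = 2^{-kn/p}\|f\|_{L^{p,q}}$ for $k \le 0$ and $\|f(2^k \cdot)\|_{F^{s,r}_{p,q}} = 2^{k(s-n/p)}\|f\|_{L^{p,q}}$ for $k \ge 1$, and is indifferent to the particular values of $q$ and $r$. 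Hence it applies verbatim to the case $q = q_*$, $r = r_*$ considered here.

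Since both directions are direct citations, there is essentially no obstacle. The only small point to mention, if anything, is that the test-function argument in Remark \ref{rmk-conditions} indeed yields a genuinely two-sided inequality $s_* - s \ge n/p_* - n/p \ge 0$ (the first coming from letting $k \to +\infty$, the second from letting $k \to -\infty$), which is exactly the condition in the theorem; thus no additional work is required.
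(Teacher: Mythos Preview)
Your proposal is correct and matches the paper's own proof exactly: the paper simply states that the theorem is an immediate consequence of Remark~\ref{rmk-conditions} (necessity) and Theorem~\ref{thm-itp-inhomf} (sufficiency via condition~(i) with $q=q_*$, $r=r_*$).
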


To further study necessary conditions for stronger interpolation inequalities, we  recall from the theory of real interpolation that if $(X_1,X_2)$ is a compatible couple of quasi-Banach spaces and $X$ is a Banach space with $X_1\cap X_2 \subset X$, then
the interpolation inequality
$
\|f\|_X \les \|f\|_{X_1}^{1-\theta} \|f\|_{X_2}^\theta
$
holds for all $f\in X_1\cap X_2$ if and only if $(X_1,X_2)_{\theta,1}\hookrightarrow X$ (see \cite[Theorem 3.11.4]{Bergh}).

\begin{thm}\label{thm-necessity-0}
Let   $s , s_1 , s_2 \in \mathbb{R}$, $1 < p,p_1,p_2 < \infty$, $1 \le   q_1 , q_2 \le \infty$, and $0<\theta<1$.
Then the interpolation inequality
\begin{equation}\label{thm-necessity-inequality-0}
\|f\|_{F^{s,1}_{p,q_*}}\les \|f\|_{F^{s_1,\infty}_{p_1,q_1}}^{1-\theta}\|f\|_{F^{s_2,\infty}_{p_2,q_2}}^\theta
\end{equation}
holds for all $f\in F^{s_1,\infty}_{p_1,q_1} \cap F^{s_2,\infty}_{p_2,q_2}$, if and only if one of the following conditions is satisfied:
\begin{enumerate}[label = \textup{(\roman*)}]
\item $s_* =s$, $p_*=p$, $s_1\neq s_2$.
\item $s_* >  s$, $s_*-s \ge n/p_*-n/p \ge 0$.
\end{enumerate}
\end{thm}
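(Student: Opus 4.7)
Both (i) and (ii) ensure that the overall hypothesis $s_* - s \ge n/p_* - n/p \ge 0$ of Theorem \ref{thm-itp-inhomf} is met. Under (i), Case III of that theorem (applied with $q = q_*$ and $r = 1$) delivers (\ref{thm-necessity-inequality-0}); under (ii), Case IV does the same. So the forward direction needs no new argument.

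\textbf{Necessity: reduction to a critical subcase.} Suppose (\ref{thm-necessity-inequality-0}) holds. Applying Remark \ref{rmk-conditions} to this inequality forces $s_* - s \ge n/p_* - n/p \ge 0$. If $s_* > s$, condition (ii) is immediate, so assume $s_* = s$; the pinching $0 \ge n/p_* - n/p \ge 0$ then forces $p_* = p$. What remains is to show $s_1 \neq s_2$, i.e., to derive a contradiction from the assumption $s_* = s$, $p_* = p$, and $s_1 = s_2 = s$. This we do by exhibiting a family of test functions for which the quotient of the two sides of (\ref{thm-necessity-inequality-0}) is unbounded.

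\textbf{Counterexample by test functions.} Work with the explicit $\psi = \{\psi_j\}_{j \in \mathbb{N}_0}$ from Remark \ref{rmk-conditions}, for which $\psi_j \equiv 1$ on $\{3 \cdot 2^{j-2} \le |\xi| \le 2^j\}$ when $j \ge 1$. Pick $\eta \in \scc$ nonzero with $\hat\eta \in C_c^\infty(\rn)$ of sufficiently small support, and pick $\xi_j \in \rn$ with $|\xi_j| = 7\cdot 2^{j-3}$ so that each translate $\xi_j + \supp \hat\eta$ is contained in the plateau of $\psi_j$ and disjoint from the supports of all other $\psi_k$. Set $f_j(x) = \eta(x) e^{ix \cdot \xi_j}$ and $f^N = \sum_{j=1}^N 2^{-js} f_j$. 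Then $\Delta_j^\psi f^N = 2^{-js} f_j$ for $1 \le j \le N$ and vanishes otherwise, so that pointwise in $x$,
\[
\left\|\bigl\{2^{js'}\Delta_j^\psi f^N(x)\bigr\}_{j \in \mathbb{N}_0}\right\|_{l^r} = |\eta(x)| \left\|\bigl\{2^{j(s'-s)}\bigr\}_{j=1}^N\right\|_{l^r},
\]
whence $\|f^N\|_{\ihsf{s'}{p'}{q'}{r}} \sim \|\eta\|_{L^{p',q'}} \|\{2^{j(s'-s)}\}_{j=1}^N\|_{l^r}$ for any admissible $s', p', q', r$. Specialization gives $\|f^N\|_{\ihsf{s}{p}{q_*}{1}} \sim N \|\eta\|_{L^{p,q_*}}$ and $\|f^N\|_{\ihsf{s}{p_i}{q_i}{\infty}} \sim \|\eta\|_{L^{p_i,q_i}}$ for $i = 1, 2$. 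Plugging these into (\ref{thm-necessity-inequality-0}) yields $N \les 1$ with a constant independent of $N$, which is absurd for large $N$. Hence $s_1 \neq s_2$, and (i) holds.

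The anticipated obstacle is ensuring the Littlewood-Paley separation: $\hat\eta$ must be supported in a ball small enough that every translate $\xi_j + \supp \hat\eta$ lies strictly inside the plateau of $\psi_j$ for all $j \ge 1$ simultaneously, which the explicit $\psi$ of Remark \ref{rmk-conditions} accommodates because the width of the plateau grows like $2^{j-2}$ while the required tolerance is fixed. Once this separation is in place, the pointwise identity above is exact, and the failure of $l^1 \hookrightarrow l^\infty$ drives the contradiction.
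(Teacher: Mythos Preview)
Your proof is correct, but your necessity argument takes a different route from the paper's. The paper proceeds abstractly: assuming $s_1=s_2$, it uses Theorem~\ref{lem-itp-inhomf} and reiteration to obtain the embedding $F^{s_*,\infty}_{p_*,1}\hookrightarrow (F^{s_1,\infty}_{p_1,q_1},F^{s_2,\infty}_{p_2,q_2})_{\theta,1}$, then invokes the equivalence between the interpolation inequality and the embedding $(X_1,X_2)_{\theta,1}\hookrightarrow X$ (valid since $F^{s,1}_{p,q_*}$ is a Banach space) to conclude $F^{s_*,\infty}_{p_*,1}\hookrightarrow F^{s,1}_{p,q_*}$; Theorem~\ref{ebd-inhom-TLL} then forces $s_*>s$. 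You instead build explicit lacunary test functions $f^N$ whose Littlewood--Paley pieces have disjoint frequency supports sitting on the plateaus of the $\psi_j$, so that the $F^{s,1}$ and $F^{s,\infty}$ norms reduce exactly to $l^1$ and $l^\infty$ norms of constant sequences, yielding $N\lesssim 1$ and a direct contradiction.

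Your approach is more elementary and self-contained: it avoids reiteration, the interpolation-inequality/embedding equivalence, and the sharp embedding theorem of Seeger--Trebels. The paper's approach is shorter and more conceptual, and its machinery is reused verbatim in the companion Theorems~\ref{thm-necessity-01} and~\ref{thm-necessity-0-b}. Your construction, incidentally, is essentially the same one used in the proof of Theorem~\ref{ebd-hom-TLL} in the Appendix to establish the $r$-necessity of the embedding, so the two arguments ultimately rest on the same counterexample; the paper just packages it behind the embedding theorem.
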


\begin{proof} By Theorem \ref{thm-itp-inhomf}, it remains to prove the necessity part of the theorem. Suppose   that $\eqref{thm-necessity-inequality-0}$ holds for all $f\in F^{s_1,\infty}_{p_1,q_1} \cap F^{s_2,\infty}_{p_2,q_2}$. Then by Remark \ref{rmk-conditions}, we have
$$
s_* -s  \ge \frac{n}{p_*} -\frac{n}{p}\ge 0.
$$
Hence there are   two  possibilities:
 (a) $s_* =s$, $p_*=p$ and (b) $s_* >  s$, $s_*-s \ge n/p_*-n/p \ge 0$.
Suppose that   $s_1=s_2$. If $p_1\neq p_2$, then    by   Theorem \ref{lem-itp-inhomf}  and the reiteration theorem,
$$
F^{s_*,\infty}_{p_*,1}  \hookrightarrow (F^{s_1,\infty}_{p_1,q_1},F^{s_2,\infty}_{p_2,q_2})_{\theta,1} ,
$$
which holds trivially when $p_1 =p_2$.
Moreover, since $F^{s,1}_{p,q_*}$ is a Banach space, we  have
$$
F^{s_*,\infty}_{p_*,1}  \hookrightarrow (F^{s_1,\infty}_{p_1,q_1}, F^{s_2,\infty}_{p_2,q_2})_{\theta,1}\hookrightarrow F^{s,1}_{p,q_*}.
$$
Hence it follows from Theorem \ref{ebd-inhom-TLL} that
 $s_* >  s$.
By contraposition, we have shown  that if $s_*  =s$, then $s_1 \neq s_2$.
This completes the proof.
\end{proof}

\begin{thm}\label{thm-necessity-01}
Let   $s , s_1 , s_2 \in \mathbb{R}$, $1  <   p < \infty$, $1 < p_1,p_2 < \infty$, $1\le r_1,r_2\le\infty$ and $0<\theta<1$.
If the interpolation inequality
\begin{equation}\label{thm-necessity-inequality-01}
\|f\|_{F^{s,r_*}_{p,1}}\les \|f\|_{F^{s_1,r_1}_{p_1,\infty}}^{1-\theta}\|f\|_{F^{s_2,r_2}_{p_2,\infty}}^\theta
\end{equation}
holds for all $f\in F^{s_1,r_1}_{p_1,\infty} \cap F^{s_2,r_2}_{p_2,\infty}$, then one of the following conditions is satisfied:
\begin{enumerate}[label = \textup{(\roman*)}]
\item $s_* =s$, $p_*=p$, $p_1\neq p_2$, $s_2-s_1\neq n/p_2- n/p_1$.
\item $s_* >  s$, $p_*=p$, $p_1 \neq p_2$.
\item $s_*-s=n/p_* - n/p>0$, $s_2-s_1\neq n/p_2 - n/p_1$.
\item $s_*-s>n/p_*-n/p>0$.
\end{enumerate}
\end{thm}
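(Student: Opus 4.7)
The plan is to argue by contraposition, showing that each way in which (i)--(iv) can fail is incompatible with the hypothesis. By Remark \ref{rmk-conditions} we already have $s_*-s\ge n/p_*-n/p\ge 0$, so the failure of (i)--(iv) leaves exactly three scenarios to rule out: (A) $s_*=s$, $p_*=p$, and either $p_1=p_2$ or $s_2-s_1=n/p_2-n/p_1$; (B) $s_*>s$, $p_*=p$, and $p_1=p_2$; and (C) $s_*-s=n/p_*-n/p>0$ and $s_2-s_1=n/p_2-n/p_1$. Since $1<p<\infty$, $F^{s,r_*}_{p,1}$ is a Banach space, so by \cite[Theorem 3.11.4]{Bergh} the hypothesis is equivalent to the continuous embedding
\[
(F^{s_1,r_1}_{p_1,\infty},F^{s_2,r_2}_{p_2,\infty})_{\theta,1}\hookrightarrow F^{s,r_*}_{p,1}.
\]
In each scenario I plan to identify a subspace of the interpolation space above whose forced embedding into $F^{s,r_*}_{p,1}$ violates Theorem \ref{ebd-inhom-TLL}, giving the desired contradiction.

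For the parts of (A) and (B) in which $p_1=p_2$, one automatically has $p_1=p_2=p_*=p$, and I would rely on Fourier-localized test functions. Fix a nonnegative Schwartz $\phi$ with $\hat\phi$ supported in $\{3/4\le|\xi|\le 1\}$ and $\phi(0)=1$, choose $x_1,\dots,x_N\in\mathbb{R}^n$ with $|x_i-x_j|$ very large, and set $f_N=\sum_{j=1}^N j^{-1/p}\phi(\cdot-x_j)$. Since $\hat{f_N}$ lies in the zeroth Littlewood--Paley annulus, one has $\|f_N\|_{F^{s,r}_{p,q}}=\|f_N\|_{L^{p,q}}$ for every admissible $s,r,q$, so the interpolation inequality restricted to $f_N$ reduces to $\|f_N\|_{L^{p,1}}\les\|f_N\|_{L^{p,\infty}}$. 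A direct computation with distribution functions yields $\|f_N\|_{L^{p,\infty}}\sim 1$ but $\|f_N\|_{L^{p,1}}\sim\log N$, contradicting the inequality as $N\to\infty$.

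The remaining scenarios are handled by combining the trivial inclusion $X_0\cap X_1\hookrightarrow(X_0,X_1)_{\theta,1}$ with the sharp characterization of Theorem \ref{ebd-inhom-TLL}. In (C) with $p_1=p_2$, the equality $n/p_2-n/p_1=0$ forces $s_1=s_2=s_*$, whence $F^{s_*,r_1}_{p_*,\infty}\cap F^{s_*,r_2}_{p_*,\infty}=F^{s_*,\min(r_1,r_2)}_{p_*,\infty}$ by Theorem \ref{ebd-inhom-TLL}(i); the hypothesis then forces $F^{s_*,\min(r_1,r_2)}_{p_*,\infty}\hookrightarrow F^{s,r_*}_{p,1}$, which is impossible because, with $s_*-s=n/p_*-n/p>0$ and source $q=\infty$ against target $q=1$, neither (iii) nor (iv) of Theorem \ref{ebd-inhom-TLL} applies. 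In the remaining parts of (A) and (C) where $p_1\ne p_2$ and $s_2-s_1=n/p_2-n/p_1$, assume without loss $p_1<p_2$; then $s_1-s_2=n/p_1-n/p_2>0$, so Theorem \ref{ebd-inhom-TLL}(iii) gives $F^{s_1,r_1}_{p_1,\infty}\hookrightarrow F^{s_2,r_2}_{p_2,\infty}$, reducing the intersection to $F^{s_1,r_1}_{p_1,\infty}$. A brief algebraic check using both defining equalities shows $s_1-s=n/p_1-n/p>0$ with $p_1<p$, so that $F^{s_1,r_1}_{p_1,\infty}\hookrightarrow F^{s,r_*}_{p,1}$ is ruled out by exactly the same obstruction---neither (iii) nor (iv) applies---completing the proof. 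The main technical obstacle I anticipate is the second paragraph: verifying that within the rigid class of Fourier-localized functions the Lorentz ratio $\|f\|_{L^{p,1}}/\|f\|_{L^{p,\infty}}$ can be made arbitrarily large, since Bernstein's inequality makes most $L^{p,q}$ norms on such functions essentially equivalent.
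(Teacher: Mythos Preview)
Your proposal is correct, and for the $s_2-s_1=n/p_2-n/p_1$ obstruction it matches the paper's argument essentially verbatim: both embed the smaller of the two source spaces into the interpolation space via $X_0\cap X_1\hookrightarrow(X_0,X_1)_{\theta,1}$ and then invoke Theorem~\ref{ebd-inhom-TLL} to obtain the Lorentz-index contradiction $\infty\le 1$.

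The genuine difference lies in how you rule out $p_1=p_2$ when $p_*=p$. The paper never constructs test functions; instead it observes that when $p_1=p_2$ one has
\[
B^{s_*,1}_{p_*}\hookrightarrow(F^{s_1,r_1}_{p_1,\infty},F^{s_2,r_2}_{p_2,\infty})_{\theta,1}
\]
(trivially via Lemma~\ref{thm-ebd-FB and BF} when $s_1=s_2$, and via the identification $(F^{s_1,r_1}_{p_1},F^{s_2,r_2}_{p_2})_{\theta,1}=B^{s_*,1}_{p_*}$ of Theorem~\ref{lem-itp-homf-ihomf} when $s_1\ne s_2$), so the hypothesis forces $B^{s_*,1}_{p_*}\hookrightarrow F^{s,r_*}_{p,1}$. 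The paper then appeals to the Seeger--Trebels characterization \cite[Theorem~1.1]{seeger}, which says this Besov-to-Triebel--Lizorkin embedding holds only if $p_*<p$. Your translated-bump construction is more self-contained in that it avoids importing that external result, but, as you correctly anticipate, the price is a delicate estimate: since $\hat\phi$ vanishes near the origin, $\phi$ is neither nonnegative nor compactly supported, and making $\|f_N\|_{L^{p,1}}/\|f_N\|_{L^{p,\infty}}\sim\log N$ rigorous requires choosing the separations $|x_i-x_j|$ to grow with $N$ so that the Schwartz tails are negligible relative to the smallest bump height $N^{-1/p}$. This is routine but must be written out carefully; the paper's route is cleaner once the Seeger--Trebels embedding theorem is available.
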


\begin{proof} Suppose   that $\eqref{thm-necessity-inequality-01}$ holds for all $f\in F^{s_1,r_1}_{p_1,\infty} \cap F^{s_2,r_2}_{p_2,\infty}$, that is, $(F^{s_1,r_1}_{p_1,\infty}, F^{s_2,r_2}_{p_2,\infty})_{\theta,1}\hookrightarrow F^{s,r_*}_{p,1}$. Then by Remark \ref{rmk-conditions}, there are four possibilities:
 (a) $s_* = s$, $p_*=p$, (b) $s_* >  s$, $p_*=p$,
 (c) $s_*-s = n/p_*-n/p>0$, and (d) $s_*-s>n/p_*-n/p>0$.

\noindent \emph{Case I.} Suppose that    $p_1=p_2$. If $s_1 =s_2$, then by Lemma \ref{thm-ebd-FB and BF},
 $$
 B^{s_*,1}_{p_*} \hookrightarrow F^{s_*,1}_{p_* }  \hookrightarrow (F^{s_1,r_1}_{p_1,\infty}, F^{s_2,r_2}_{p_2,\infty})_{\theta,1}\hookrightarrow F^{s,r_*}_{p,1}.
 $$
 If $s_1 \neq s_2$, then by   Theorem \ref{lem-itp-homf-ihomf}, we also have
 $$
 B^{s_*,1}_{p_*}= (F^{s_1,r_1}_{p_1}, F^{s_2,r_2}_{p_2})_{\theta,1}  \hookrightarrow (F^{s_1,r_1}_{p_1,\infty}, F^{s_2,r_2}_{p_2,\infty})_{\theta,1}\hookrightarrow F^{s,r_*}_{p,1}.
 $$
 But it was shown in \cite[Theorem 1.1]{seeger} that $B^{s_*,1}_{p_*}\hookrightarrow \ihsf{s}{p}{1}{r_*}$ if and only if $p_*<p$.  Hence it follows that $p_* < p$.
By contraposition, we deduce   that if $p_* =p$, then $p_1 \neq p_2$.

\noindent \emph{Case II.} Suppose that $s_*-s=n/p_*-n/p$ and $s_2-s_1=n/p_2-n/p_1$.
 By symmetry, we may assume that $s_1\le s_2$.
Then since $s-n/p=s_1-n/p_1=s_2- n/p_2$, it follows from  Theorem \ref{ebd-inhom-TLL} that $\ihsf{s_2}{p_2}{\infty}{1}\hookrightarrow (F^{s_1,r_1}_{p_1,\infty},F^{s_2,r_2}_{p_2,\infty})_{\theta,1}$ but  $\ihsf{s_2}{p_2}{\infty}{1} \nhra \ihsf{s}{p}{1}{r_*}$.
This implies that if $s_*-s=n/p_*-n/p$, then $s_1-s_2 \neq n/p_1-n/p_2$.

Combining the conclusions from Cases I  and II, we   complete the proof of the theorem.
\end{proof}

\begin{thm}\label{thm-necessity}
Let   $s , s_1 , s_2 \in \mathbb{R}$, $1 < p,p_1,p_2 < \infty$, and $0<\theta<1$.
If the interpolation inequality
\begin{equation*}
\|f\|_{F^{s,1}_{p,1}}\les \|f\|_{F^{s_1,\infty}_{p_1,\infty}}^{1-\theta}\|f\|_{F^{s_2,\infty}_{p_2,\infty}}^\theta
\end{equation*}
holds for all $f\in F^{s_1,\infty}_{p_1,\infty} \cap F^{s_2,\infty}_{p_2,\infty}$, then one of the following conditions is satisfied:
\begin{enumerate}[label = \textup{(\roman*)}]
\item $s_* =s$, $p_*=p$, $s_1\neq s_2$, $p_1\neq p_2$, $s_2-s_1\neq n/p_2 - n/p_1$.
\item $s_* >  s$, $p_*=p$, $p_1 \neq p_2$.
\item $s_*-s=n/p_* - n/p>0$, $s_2-s_1\neq n/p_2 - n/p_1$.
\item $s_*-s>n/p_*-n/p>0$.
\end{enumerate}
\end{thm}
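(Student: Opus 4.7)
The plan is to reduce Theorem \ref{thm-necessity} to the already-established Theorems \ref{thm-necessity-0} and \ref{thm-necessity-01} by exploiting two elementary embeddings of the target space. Since $\ihsf{s}{p}{q}{r}$ is realized as $L^{p,q}(l^r)$ in Littlewood-Paley form, the basic inclusions (\ref{eq-ebd}) yield $F^{s,1}_{p,1} \hookrightarrow F^{s,1}_{p,\infty}$ (weakening the Lorentz second index) and $F^{s,1}_{p,1} \hookrightarrow F^{s,\infty}_{p,1}$ (weakening the $l^r$ index). This is the entire content of the reduction.

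First, I would compose each of these embeddings with the hypothesized interpolation inequality to obtain the two weaker inequalities
\begin{equation*}
\|f\|_{F^{s,1}_{p,\infty}}\les \|f\|_{F^{s_1,\infty}_{p_1,\infty}}^{1-\theta}\|f\|_{F^{s_2,\infty}_{p_2,\infty}}^\theta
\end{equation*}
and
\begin{equation*}
\|f\|_{F^{s,\infty}_{p,1}}\les \|f\|_{F^{s_1,\infty}_{p_1,\infty}}^{1-\theta}\|f\|_{F^{s_2,\infty}_{p_2,\infty}}^\theta.
\end{equation*}
The first matches the hypothesis of Theorem \ref{thm-necessity-0} with $q_1=q_2=\infty$, so $q_*=\infty$ and its left-hand side coincides with $F^{s,1}_{p,q_*}$; this forces dichotomy (A): either (a$_1$) $s_*=s$, $p_*=p$, $s_1\neq s_2$, or (a$_2$) $s_*>s$ with $s_*-s\ge n/p_*-n/p\ge 0$. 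The second matches Theorem \ref{thm-necessity-01} with $r_1=r_2=\infty$, so $r_*=\infty$ and its left-hand side is $F^{s,\infty}_{p,1}$; this forces the tetrachotomy (B) given by items (i)--(iv) of that theorem.

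Finally, I would cross-tabulate (A) and (B). Any pairing that combines ``$s_*=s$'' on one side with ``$s_*>s$'' on the other is vacuous and drops out, so at most four combinations survive. The ``$s_*=s$'' branches of (A) and (B) intersect to give condition (i) of the present theorem; pairing (a$_2$) with the three ``$s_*>s$'' alternatives of (B)---namely $p_*=p$ with $p_1\neq p_2$; $s_*-s=n/p_*-n/p>0$ with $s_2-s_1\neq n/p_2-n/p_1$; and $s_*-s>n/p_*-n/p>0$---produces (ii), (iii), and (iv), respectively. In each case, the auxiliary condition $s_*-s\ge n/p_*-n/p\ge 0$ coming from (a$_2$) is automatically absorbed into the corresponding $(\mathrm{B})$-branch. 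The main obstacle is purely organizational: since the two input theorems do all the analytic work, what remains is a short compatibility check among eight logical pairings.
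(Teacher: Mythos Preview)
Your proposal is correct and matches the paper's own approach exactly: the paper's proof of this theorem is the single sentence ``The theorem immediately follows from Theorems \ref{thm-necessity-0} and \ref{thm-necessity-01},'' and your argument simply unpacks that reduction via the two trivial embeddings $F^{s,1}_{p,1}\hookrightarrow F^{s,1}_{p,q_*}$ and $F^{s,1}_{p,1}\hookrightarrow F^{s,r_*}_{p,1}$ together with the routine cross-tabulation.
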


\begin{proof} The theorem immediately follows from Theorems \ref{thm-necessity-0} and \ref{thm-necessity-01}.
\end{proof}

The following is  the homogeneous counterpart  of Theorem  \ref{thm-itp-inhomf}.

\begin{thm}\label{thm-itp-homf}
Assume that
\begin{equation*}
 s_* -s = \frac{n}{p_*}-\frac{n}{p} \ge 0 .
\end{equation*}
Assume in addition that $ 1 < p , p_1 , p_2 < \infty $.
Then the interpolation inequality
\[
\|f\|_{\hsf{s}{p}{q}{r}}\lesssim \|f\|_{\hsf{s_1}{p_1}{q_1}{r_1}}^{1-\theta}\|f\|_{\hsf{s_2}{p_2}{q_2}{r_2}}^\theta
\]
holds for all $f \in\hsf{s_1}{p_1}{q_1}{r_1} \cap \hsf{s_2}{p_2}{q_2}{r_2}$, if one of the following conditions is satisfied:
\begin{enumerate}[label = \textup{(\roman*)}]
\item    $q_* \le q$, $r_* \le r$.
\item $s = s_1=s_2$, $p_1 \neq p_2$, $\max (r_1 , r_2 ) \le r$.
\item  $s_* =s$,  $s_1 \neq s_2$,  $q_* \le q$.
\item $s_* >  s$,   $q_* \le q$.
\item  $s_* >  s$, $s_2 -s_1 \neq n/p_2 -n/p_1$.
\end{enumerate}
\end{thm}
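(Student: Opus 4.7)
My plan is to follow the seven-case blueprint of the proof of Theorem \ref{thm-itp-inhomf} with two structural substitutions throughout. First, I would replace the inhomogeneous decomposition $\{\Delta_j^\psi\}_{j\in\mathbb{N}_0}$ by its homogeneous analogue $\{\Delta_j^\varphi\}_{j\in\mathbb{Z}}$; since Lemma \ref{itp-sum} is stated for an arbitrary index set $\Gamma$, this substitution costs nothing. Second, I would replace every appeal to Theorem \ref{ebd-inhom-TLL} by its homogeneous counterpart Theorem \ref{ebd-hom-TLL}. The latter has no analogue of condition (ii) of Theorem \ref{ebd-inhom-TLL}, which is precisely why the hypothesis here requires the equality $s_* - s = n/p_* - n/p$ rather than merely the inequality: the only way to pass from $(s_*, p_*)$ down to a strictly smaller $(s, p)$ in the homogeneous Triebel--Lizorkin--Lorentz scale is via the scale-invariant Sobolev embedding. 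The assumption $1 < p, p_1, p_2 < \infty$ ensures that every auxiliary exponent I introduce lies in $(1, \infty)$ and is therefore admissible in Theorem \ref{ebd-hom-TLL}.

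For conditions (i)--(iv) the arguments are essentially line-for-line transliterations of Cases I--IV in the proof of Theorem \ref{thm-itp-inhomf}. The pointwise identity $2^{js_*}|\Delta_j^\varphi f| = (2^{js_1}|\Delta_j^\varphi f|)^{1-\theta}(2^{js_2}|\Delta_j^\varphi f|)^\theta$, combined with H\"older in $\ell^r$ at indices $r_1/(1-\theta), r_2/\theta$ and Lorentz H\"older (Lemma \ref{Holder-Lorentz}) at $L^{p_1/(1-\theta), q_1/(1-\theta)} \cdot L^{p_2/\theta, q_2/\theta}$, yields $\|f\|_{\hsf{s_*}{p_*}{q_*}{r_*}} \lesssim \|f\|_{\hsf{s_1}{p_1}{q_1}{r_1}}^{1-\theta}\|f\|_{\hsf{s_2}{p_2}{q_2}{r_2}}^\theta$, and the remaining embedding into $\hsf{s}{p}{q}{r}$ is supplied by Theorem \ref{ebd-hom-TLL}(i) for (i) and Theorem \ref{ebd-hom-TLL}(ii) for (iv), the latter imposing no constraint on $r$. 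For (iii), Lemma \ref{itp-sum} applied over $\mathbb{Z}$ followed by Lorentz H\"older yields the target with left-hand side $\hsf{s_*}{p}{q}{1}$, which embeds into $\hsf{s}{p}{q}{r}$ by Theorem \ref{ebd-hom-TLL}(i). For (ii), $s_1 = s_2 = s$ forces $p_* = p$ by the main assumption, and Lemma \ref{interpolation-Lp} applied to $L^{p,q}(\ell^r)$, together with the embeddings $\hsf{s_i}{p_i}{q_i}{r_i} \hookrightarrow \hsf{s_i}{p_i}{q_i}{r}$ from $r_i \le r$, finishes the estimate.

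The only genuinely new case is (v), which replaces the trio of inhomogeneous Cases V--VII. Set $A_i := s_i - n/p_i$; the hypothesis $s_2 - s_1 \neq n/p_2 - n/p_1$ reads $A_1 \neq A_2$, while $s_* > s$ together with the main assumption forces $1 < p_* < p$. I would bracket $p$ between two slightly perturbed exponents $\bar p, \tilde p \in (1, \infty)$ with $\bar p < p < \tilde p$, and introduce the weights
\[
\lambda = \frac{s - n/\bar p - A_1}{A_2 - A_1}, \qquad \mu = \frac{s - n/\tilde p - A_1}{A_2 - A_1}.
\]
Since $s - n/p = s_* - n/p_* = (1-\theta)A_1 + \theta A_2$, both $\lambda$ and $\mu$ equal $\theta$ at $\bar p = \tilde p = p$, so after assuming without loss of generality $A_2 > A_1$, a sufficiently small perturbation yields $0 < \lambda < \theta < \mu < 1$. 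By construction $(1-\lambda)s_1 + \lambda s_2 - s = n\bigl((1-\lambda)/p_1 + \lambda/p_2 - 1/\bar p\bigr)$, and for $\bar p$ close enough to $p$ this common value is strictly positive, so Case (iv) applies and produces
\[
\|f\|_{\hsf{s}{\bar p}{\infty}{r}} \lesssim \|f\|_{\hsf{s_1}{p_1}{q_1}{r_1}}^{1-\lambda}\|f\|_{\hsf{s_2}{p_2}{q_2}{r_2}}^\lambda,
\]
and analogously for $\tilde p, \mu$. Finally, setting $\nu = (\theta - \lambda)/(\mu - \lambda) \in (0, 1)$, which satisfies $1/p = (1-\nu)/\bar p + \nu/\tilde p$, I would invoke Case (ii) (with common exponent $s$ and differing spatial exponents $\bar p \neq \tilde p$) to interpolate the two bounds, using $(1-\nu)(1-\lambda) + \nu(1-\mu) = 1 - \theta$ and $(1-\nu)\lambda + \nu\mu = \theta$ to recover the target inequality.

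The main obstacle is this simultaneous fine-tuning in Case (v): I need $\bar p, \tilde p$ on opposite sides of $p$ in $(1, \infty)$, the auxiliary spatial exponents $((1-\lambda)/p_1 + \lambda/p_2)^{-1}$ and $((1-\mu)/p_1 + \mu/p_2)^{-1}$ in $(1, \infty)$, and strict positivity of both $(1-\lambda)s_1 + \lambda s_2 - s$ and $(1-\mu)s_1 + \mu s_2 - s$ so that Case (iv) is truly applicable. Each of these quantities is continuous in $\bar p, \tilde p$ and holds with strict inequality at $\bar p = \tilde p = p$, so they all persist under small perturbations; the whole bracketing mechanism is enabled by $A_1 \neq A_2$, which is exactly where the hypothesis $s_2 - s_1 \neq n/p_2 - n/p_1$ enters, and without it the two-sided perturbation collapses.
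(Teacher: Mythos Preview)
Your proposal is correct and follows essentially the same approach as the paper. The paper's own proof is a single sentence---``The proof of the theorem is exactly the same as that of Theorem \ref{thm-itp-inhomf} except for using Theorem \ref{ebd-hom-TLL} instead of Theorem \ref{ebd-inhom-TLL}''---and you have carried out precisely this program, with your handling of condition (v) mirroring Case VII of the inhomogeneous proof.
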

\begin{proof}
The proof  of the theorem is exactly the same as that of Theorem \ref{thm-itp-inhomf} except for using Theorem \ref{ebd-hom-TLL} instead of Theorem \ref{ebd-inhom-TLL}.
\end{proof}

The proofs of Theorems \ref{thm-necessity-00}, \ref{thm-necessity-0},  \ref{thm-necessity-01}, and \ref{thm-necessity} can be easily adapted to deduce  their homogeneous counterparts from Remark \ref{rmk-conditions}, Theorems \ref{lem-itp-inhomf}, and \ref{ebd-hom-TLL}.

\begin{thm}\label{thm-necessity-00-hom}
Let   $s , s_1 , s_2 \in \mathbb{R}$, $1  < p,p_1,p_2 < \infty$, $1 \le   q_1 , q_2 , r_1 , r_2 \le \infty$, and $0<\theta<1$.
Then the interpolation inequality
\[
\|f\|_{\hsf{s}{p}{q_*}{r_*}}\lesssim \|f\|_{\hsf{s_1}{p_1}{q_1}{r_1}}^{1-\theta}\|f\|_{\hsf{s_2}{p_2}{q_2}{r_2}}^\theta
\]
holds for all  $f\in \dot F^{s_1,r_1}_{p_1,q_1} \cap \dot F^{s_2,r_2}_{p_2,q_2}$, if and only if $$
s_* -s  = \frac{n}{p_*} -\frac{n}{p}\ge 0.
$$
\end{thm}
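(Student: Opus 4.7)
The plan is to read off both directions of this equivalence from results already in hand, so that the proof amounts to recognizing which previously established theorem covers each direction.

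For \emph{sufficiency}, assume $s_*-s = n/p_*-n/p \ge 0$. I would specialize Theorem \ref{thm-itp-homf} to the case $q=q_*$ and $r=r_*$. In that theorem, condition (i) requires $q_*\le q$ and $r_*\le r$, which here become the trivial equalities $q_*\le q_*$ and $r_*\le r_*$. Since the standing hypotheses $1<p,p_1,p_2<\infty$ and $s_*-s = n/p_*-n/p\ge 0$ are exactly the blanket assumptions of Theorem \ref{thm-itp-homf}, the desired interpolation inequality follows immediately for all $f\in \hsf{s_1}{p_1}{q_1}{r_1}\cap\hsf{s_2}{p_2}{q_2}{r_2}$.

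For \emph{necessity}, I would invoke the scaling argument already recorded in Remark \ref{rmk-conditions}. That remark verifies (via applying the inequality to dilates $f(2^k\cdot)$ of a fixed Schwartz function with frequency support in an annulus $\{a\le|\xi|\le b\}$, $3/4<a<b<1$) that whenever the homogeneous inequality \eqref{itp-eq-homf} holds one must have $s_*-s = n/p_* - n/p\ge 0$. Applying this to the present inequality with $q=q_*$ and $r=r_*$ gives the required necessity.

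There is no substantive obstacle; the content of the theorem is essentially a packaging of Theorem \ref{thm-itp-homf}(i) together with Remark \ref{rmk-conditions}. Therefore the proof can be written in a few lines as: sufficiency by Theorem \ref{thm-itp-homf}(i), necessity by the dilation/scaling computation of Remark \ref{rmk-conditions}.
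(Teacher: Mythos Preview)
Your proposal is correct and matches the paper's approach: the paper states that the homogeneous counterparts of Theorems \ref{thm-necessity-00}--\ref{thm-necessity} are deduced by adapting their proofs using Remark \ref{rmk-conditions} and the homogeneous results, and for Theorem \ref{thm-necessity-00} specifically this means exactly sufficiency by Theorem \ref{thm-itp-homf}(i) and necessity by the scaling argument in Remark \ref{rmk-conditions}.
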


\begin{thm}\label{thm-necessity-0-hom}
Let   $s , s_1 , s_2 \in \mathbb{R}$, $1 < p,p_1,p_2 < \infty$, $1 \le   q_1 , q_2 \le \infty$, and $0<\theta<1$.
Then the interpolation inequality
\[
\|f\|_{\dot F^{s,1}_{p,q_*}}\les \|f\|_{\dot F^{s_1,\infty}_{p_1,q_1}}^{1-\theta}\|f\|_{\dot F^{s_2,\infty}_{p_2,q_2}}^\theta
\]
holds for all $f\in \dot F^{s_1,\infty}_{p_1,q_1} \cap \dot F^{s_2,\infty}_{p_2,q_2}$, if and only if one of the following conditions is satisfied:
\begin{enumerate}[label = \textup{(\roman*)}]
\item $s_* =s$, $p_*=p$, $s_1\neq s_2$.
\item   $s_*-s  = n/p_*-n/p > 0$.
\end{enumerate}
\end{thm}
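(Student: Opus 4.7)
The plan is to mirror the two-part proof of Theorem \ref{thm-necessity-0}, replacing each inhomogeneous ingredient by its homogeneous analog, exactly as the authors themselves suggest in the concluding remark of the section.

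For sufficiency, I would simply invoke Theorem \ref{thm-itp-homf} with target parameters $q = q_*$ and $r = 1$. Condition (i) of the present statement, namely $s_* = s$, $p_* = p$, $s_1 \neq s_2$, is subsumed by condition (iii) of Theorem \ref{thm-itp-homf} (note that $q_* \le q_*$ holds trivially, and the hypothesis $s_* - s = n/p_* - n/p \ge 0$ required there is automatic since both sides vanish). Condition (ii), $s_* - s = n/p_* - n/p > 0$, is handled by condition (iv) of Theorem \ref{thm-itp-homf}, again using $q_* \le q_*$.

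For necessity, the starting point is that $\dot F^{s,1}_{p,q_*}$ is a Banach space (since $1 < p < \infty$), so the interpolation inequality is equivalent to the real-interpolation embedding
\[
(\dot F^{s_1, \infty}_{p_1, q_1}, \dot F^{s_2, \infty}_{p_2, q_2})_{\theta, 1} \hookrightarrow \dot F^{s, 1}_{p, q_*}.
\]
The homogeneous version of Remark \ref{rmk-conditions} forces $s_* - s = n/p_* - n/p \ge 0$. The strict case is precisely condition (ii), so what remains is to rule out $s_1 = s_2$ when $s_* = s$ and $p_* = p$. Assuming $s_1 = s_2 = s$ for contradiction, I would use Theorem \ref{lem-itp-inhomf} together with the reiteration theorem to produce the embedding
\[
\dot F^{s_*, \infty}_{p_*, 1} \hookrightarrow (\dot F^{s_1, \infty}_{p_1, q_1}, \dot F^{s_2, \infty}_{p_2, q_2})_{\theta, 1}.
\]
Chaining with the hypothesis yields $\dot F^{s, \infty}_{p, 1} \hookrightarrow \dot F^{s, 1}_{p, q_*}$, which Theorem \ref{ebd-hom-TLL} forbids since it would demand $\infty \le 1$ on the third index.

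The main technical nuisance is justifying the displayed embedding in the degenerate subcase $p_1 = p_2 = p$ (forced by $p_* = p$ when the $p_i$ are equal), since Lemma \ref{interpolation-Lp} and hence the relevant part of Theorem \ref{lem-itp-inhomf} are then vacuous. I would handle this directly: if additionally $q_1 = q_2$, the two source spaces coincide and the hypothesis already reduces to $\dot F^{s,\infty}_{p,q_1} \hookrightarrow \dot F^{s,1}_{p,q_1}$, which is contradicted by Theorem \ref{ebd-hom-TLL}; if $q_1 \neq q_2$, I would apply standard real interpolation within the Lorentz scale at the Littlewood-Paley level in $L^{p,q_i}(l^\infty)$ to obtain $\dot F^{s,\infty}_{p,1} \hookrightarrow (\dot F^{s,\infty}_{p,q_1}, \dot F^{s,\infty}_{p,q_2})_{\theta,1}$, and conclude in the same way via Theorem \ref{ebd-hom-TLL}.
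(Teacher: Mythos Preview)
Your proposal is correct and follows essentially the same route as the paper, which simply adapts the proof of Theorem~\ref{thm-necessity-0} using Remark~\ref{rmk-conditions}, Theorem~\ref{lem-itp-inhomf}, and Theorem~\ref{ebd-hom-TLL}. The only point worth tightening is the degenerate case $p_1=p_2$ with $s_1=s_2$: there is no need to split on $q_1=q_2$ versus $q_1\neq q_2$ or to invoke Lorentz-scale interpolation, since the embedding $\dot F^{s_*,\infty}_{p_*,1}\hookrightarrow \dot F^{s_1,\infty}_{p_1,q_1}\cap \dot F^{s_2,\infty}_{p_2,q_2}\hookrightarrow(\dot F^{s_1,\infty}_{p_1,q_1},\dot F^{s_2,\infty}_{p_2,q_2})_{\theta,1}$ is immediate from $1\le q_i$---this is exactly what the paper means by ``holds trivially when $p_1=p_2$.''
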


\begin{thm}\label{thm-necessity-hom-01}
Let   $s , s_1 , s_2 \in \mathbb{R}$, $1 < p,p_1,p_2 < \infty$, $1\le r_1,r_2\le\infty$, and $0<\theta<1$.
If the interpolation inequality
$$
\|f\|_{\dot F^{s,r_*}_{p,1}}\les \|f\|_{\dot F^{s_1,r_1}_{p_1,\infty}}^{1-\theta}\|f\|_{\dot F^{s_2,r_2}_{p_2,\infty}}^\theta
$$
holds for all $f\in \dot F^{s_1,r_1}_{p_1,\infty} \cap \dot F^{s_2,r_2}_{p_2,\infty}$, then one of the following conditions is satisfied:
\begin{enumerate}[label = \textup{(\roman*)}]
\item $s_* =s$, $p_*=p$, $p_1\neq p_2$, $s_2-s_1\neq n/p_2 - n/p_1$.
\item $s_*-s=n/p_* - n/p>0$, $s_2-s_1\neq n/p_2 - n/p_1$.
\end{enumerate}
\end{thm}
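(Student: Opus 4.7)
The plan is to adapt the proof of Theorem \ref{thm-necessity-01} to the homogeneous setting, replacing each ingredient with its homogeneous analog: Remark \ref{rmk-conditions} for \eqref{itp-eq-homf}, the homogeneous part of Theorem \ref{lem-itp-inhomf}, the homogeneous interpolation identity in Theorem \ref{lem-itp-homf-ihomf}, and the homogeneous embedding Theorem \ref{ebd-hom-TLL}. By \cite[Theorem 3.11.4]{Bergh}, the hypothesis is equivalent to the continuous embedding
\[
(\dot F^{s_1,r_1}_{p_1,\infty},\dot F^{s_2,r_2}_{p_2,\infty})_{\theta,1}\hookrightarrow \dot F^{s,r_*}_{p,1}.
\]
The homogeneous form of Remark \ref{rmk-conditions} forces $s_*-s = n/p_*-n/p \ge 0$; unlike in the inhomogeneous setting, strict inequality of the first difference cannot occur, so only two cases remain: (a) $s_* = s$ and $p_* = p$, and (b) $s_*-s = n/p_*-n/p > 0$. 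It therefore suffices to establish two subclaims, namely that $p_1 \ne p_2$ holds in case (a) and that $s_2 - s_1 \ne n/p_2 - n/p_1$ holds in both cases, which together yield conclusions (i) and (ii).

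For the first subclaim, assume toward contradiction that $p_1 = p_2$, so $p_1 = p_2 = p_* = p$. If $s_1 = s_2$, then the homogeneous part of Lemma \ref{thm-ebd-FB and BF} together with the trivial embeddings $\dot F^{s_*,1}_{p_*} \hookrightarrow \dot F^{s_i,r_i}_{p_i,\infty}$ from Theorem \ref{ebd-hom-TLL}(i) gives $\dot B^{s_*,1}_{p_*} \hookrightarrow (\dot F^{s_1,r_1}_{p_1,\infty},\dot F^{s_2,r_2}_{p_2,\infty})_{\theta,1}$. If $s_1 \ne s_2$, then the homogeneous identity $(\dot F^{s_1,r_1}_{p_1},\dot F^{s_2,r_2}_{p_2})_{\theta,1} = \dot B^{s_*,1}_{p_*}$ from Theorem \ref{lem-itp-homf-ihomf}, combined with the trivial inclusion $L^p \hookrightarrow L^{p,\infty}$ and the monotonicity of real interpolation, yields the same conclusion. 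Chaining with the reformulated hypothesis produces $\dot B^{s_*,1}_{p_*} \hookrightarrow \dot F^{s,r_*}_{p,1}$, which should fail at the borderline $p_* = p$ by the homogeneous analog of \cite[Theorem 1.1]{seeger}.

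For the second subclaim, suppose $s_2 - s_1 = n/p_2 - n/p_1$ and, by symmetry, $s_1 \le s_2$; equivalently, $s - n/p = s_1 - n/p_1 = s_2 - n/p_2$. Theorem \ref{ebd-hom-TLL} (condition (i) when $s_1 = s_2$, otherwise (ii) with $s_2 - s_1 = n/p_2 - n/p_1 > 0$) then furnishes $\dot F^{s_2,1}_{p_2,\infty} \hookrightarrow \dot F^{s_i,r_i}_{p_i,\infty}$ for both $i = 1, 2$, so $\dot F^{s_2,1}_{p_2,\infty} \hookrightarrow (\dot F^{s_1,r_1}_{p_1,\infty},\dot F^{s_2,r_2}_{p_2,\infty})_{\theta,1} \hookrightarrow \dot F^{s,r_*}_{p,1}$. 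However, inspecting Theorem \ref{ebd-hom-TLL} shows that the embedding $\dot F^{s_2,1}_{p_2,\infty} \hookrightarrow \dot F^{s,r_*}_{p,1}$ demands the Lorentz-index inequality $\infty \le 1$, which is impossible, ruling out $s_2 - s_1 = n/p_2 - n/p_1$ in both cases. The main obstacle is the first subclaim: the failure of $\dot B^{s_*,1}_{p_*} \hookrightarrow \dot F^{s,r_*}_{p,1}$ at the critical parameter $p_* = p$ is not explicitly recorded in the paper, and supplying it requires either transcribing the argument of \cite{seeger} to the homogeneous scale or producing a direct counterexample (for instance an $f$ with Fourier support in an annulus lying in $L^p \setminus L^{p,1}$).
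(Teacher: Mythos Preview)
Your proof follows exactly the route the paper indicates: adapt the proof of Theorem~\ref{thm-necessity-01} using Remark~\ref{rmk-conditions} (homogeneous form), Theorem~\ref{lem-itp-homf-ihomf}, and Theorem~\ref{ebd-hom-TLL} in place of their inhomogeneous counterparts. The two subclaims and their verifications match the paper's Case~I and Case~II essentially verbatim.

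Regarding your stated obstacle: the non-embedding $\dot B^{s_*,1}_{p_*}\not\hookrightarrow \dot F^{s,r_*}_{p,1}$ when $p_*=p$ is indeed not written out in the paper, but the paper's inhomogeneous proof relies on the corresponding inhomogeneous statement \cite[Theorem~1.1]{seeger} at the same point, and the paper already notes (just before Theorem~\ref{thm-ebd-FB-2}) that \cite[p.~1020]{seeger} asserts the homogeneous analogs of the Seeger--Trebels embedding theorems follow by the same arguments. So your reliance here is no greater than the paper's own. Your proposed direct route---exhibiting $f\in L^p\setminus L^{p,1}$ with $\hat f$ supported in a single dyadic annulus---is precisely the mechanism behind that non-embedding, and suffices to close the gap if you want to make it self-contained.
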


\begin{thm}\label{thm-necessity-hom}
Let   $s , s_1 , s_2 \in \mathbb{R}$, $1 < p,p_1,p_2 < \infty$, and $0<\theta<1$.
If the interpolation inequality
$$
\|f\|_{\dot F^{s,1}_{p,1}}\les \|f\|_{\dot F^{s_1,\infty}_{p_1,\infty}}^{1-\theta}\|f\|_{\dot F^{s_2,\infty}_{p_2,\infty}}^\theta
$$
holds for all $f\in \dot F^{s_1,\infty}_{p_1,\infty} \cap \dot F^{s_2,\infty}_{p_2,\infty}$, then one of the following conditions is satisfied:
\begin{enumerate}[label = \textup{(\roman*)}]
\item $s_* =s$, $p_*=p$, $s_1\neq s_2$, $p_1\neq p_2$, $s_2-s_1\neq n/p_2 - n/p_1$.
\item $s_*-s=n/p_* - n/p>0$, $s_2-s_1\neq n/p_2 - n/p_1$.
\end{enumerate}
\end{thm}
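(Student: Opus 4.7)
The plan is to deduce Theorem \ref{thm-necessity-hom} directly from Theorems \ref{thm-necessity-0-hom} and \ref{thm-necessity-hom-01}, mirroring the way Theorem \ref{thm-necessity} was obtained from Theorems \ref{thm-necessity-0} and \ref{thm-necessity-01} in the inhomogeneous setting. So I assume the interpolation inequality
$$\|f\|_{\dot F^{s,1}_{p,1}}\les \|f\|_{\dot F^{s_1,\infty}_{p_1,\infty}}^{1-\theta}\|f\|_{\dot F^{s_2,\infty}_{p_2,\infty}}^\theta$$
holds for all $f\in \dot F^{s_1,\infty}_{p_1,\infty}\cap \dot F^{s_2,\infty}_{p_2,\infty}$, and I will extract two separate dichotomies from this hypothesis and then intersect them.

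First I would specialize Theorem \ref{thm-necessity-0-hom} to $q_1=q_2=\infty$, so that $q_*=\infty$ and $\dot F^{s,1}_{p,q_*}=\dot F^{s,1}_{p,\infty}$. Since Theorem \ref{ebd-hom-TLL} gives the trivial embedding $\dot F^{s,1}_{p,1}\hookrightarrow \dot F^{s,1}_{p,\infty}$, the assumed inequality immediately implies the hypothesis of Theorem \ref{thm-necessity-0-hom}. Therefore one of the following alternatives must hold: either (a) $s_*=s$, $p_*=p$, $s_1\neq s_2$, or (b) $s_*-s=n/p_*-n/p>0$.

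Next I would specialize Theorem \ref{thm-necessity-hom-01} to $r_1=r_2=\infty$, so that $r_*=\infty$ and $\dot F^{s,r_*}_{p,1}=\dot F^{s,\infty}_{p,1}$. Again by Theorem \ref{ebd-hom-TLL}, the embedding $\dot F^{s,1}_{p,1}\hookrightarrow \dot F^{s,\infty}_{p,1}$ is trivial, so the hypothesis of Theorem \ref{thm-necessity-hom-01} is also satisfied. This yields a second dichotomy: either (a$'$) $s_*=s$, $p_*=p$, $p_1\neq p_2$, $s_2-s_1\neq n/p_2-n/p_1$, or (b$'$) $s_*-s=n/p_*-n/p>0$, $s_2-s_1\neq n/p_2-n/p_1$.

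The last step is a purely logical cross-check. If (a) holds, then $s_*=s$ is incompatible with (b$'$), so (a$'$) must also hold, and combining (a) with (a$'$) yields exactly condition (i) of the theorem. If instead (b) holds, then $s_*>s$ rules out (a$'$), so (b$'$) must hold, which is exactly condition (ii). Since this argument is entirely a matter of invoking previously established results and comparing the two resulting cases, I do not foresee any genuine obstacle; all of the real content sits in the proofs of the two auxiliary theorems, which the authors indicate are obtained by the same arguments used in the inhomogeneous case, with Theorem \ref{ebd-inhom-TLL} replaced by Theorem \ref{ebd-hom-TLL} throughout.
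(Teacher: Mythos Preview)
Your proposal is correct and follows exactly the approach the paper indicates: the homogeneous Theorem \ref{thm-necessity-hom} is the direct analogue of Theorem \ref{thm-necessity}, whose proof consists of combining Theorems \ref{thm-necessity-0} and \ref{thm-necessity-01}, and you have carried out that combination (with the trivial embeddings $\dot F^{s,1}_{p,1}\hookrightarrow \dot F^{s,1}_{p,\infty}$ and $\dot F^{s,1}_{p,1}\hookrightarrow \dot F^{s,\infty}_{p,1}$) and the case check cleanly.
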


\section{Interpolation inequalities in Besov-Lorentz spaces}

Using the arguments in the proofs of Theorems \ref{thm-itp-inhomf} and \ref{thm-itp-homf}, we prove interpolation inequalities in Besov-Lorentz spaces.

\begin{thm}\label{thm-itp-inhomb}
Assume that
\[
 s_* -s   \ge   \frac{n}{p_*}-\frac{n}{p} \ge 0 .
\]
 Then the interpolation inequality
\begin{equation*}
\|f\|_{\ihsb{s}{p}{q}{r}}\lesssim \|f\|_{\ihsb{s_1}{p_1}{q_1}{r_1}}^{1-\theta}\|f\|_{\ihsb{s_2}{p_2}{q_2}{r_2}}^\theta
\end{equation*}
holds for all $f \in\ihsb{s_1}{p_1}{q_1}{r_1} \cap \ihsb{s_2}{p_2}{q_2}{r_2}$, if one of the following conditions is satisfied:
\begin{enumerate}[label = \textup{(\roman*)}]
\item    $ q_* \le q$, $r_* \le r$.
\item $p_* =p$, $p_1 \neq p_2$, $r_* \le r$.
\item    $p_* < p$, $r_* \le r$.
\item $s_1\neq s_2$, $p=p_1=p_2$, $\max(q_1,q_2)\le q$.
\item $s_* >  s$, $p_*=p$, $q_*\le q$.
\item $s_* >  s$, $p_* =p$, $p_1 \neq p_2$.
\item  $s_* -s > n/p_* -n/p >0$.
\item  $s_* -s = n/p_* -n/p >0$,  $s_2 -  s_1 \neq  n/p_2 -n/p_1$.
\end{enumerate}
\end{thm}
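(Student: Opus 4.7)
The plan is to parallel the case-by-case proof of Theorem \ref{thm-itp-inhomf}, but working with the Besov-Lorentz quasi-norm $\|\{2^{js}\Delta_j^\psi f\}\|_{l^r(L^{p,q})}$ in place of $L^{p,q}(l^r)$, and substituting Theorem \ref{ebd-inhom-B} (together with Remark \ref{rmk-ebd-inhom-B} for the limiting cases) for Theorem \ref{ebd-inhom-TLL} throughout. The pointwise factorization
\[
2^{js_*}|\Delta_j^\psi f|=\bigl(2^{js_1}|\Delta_j^\psi f|\bigr)^{1-\theta}\bigl(2^{js_2}|\Delta_j^\psi f|\bigr)^\theta
\]
remains the central identity; since the two norms are now applied in the order $L^{p,q}$ first, $l^r$ second, H\"older's inequality is applied to scalar-valued quantities rather than sequence-valued functions, which slightly simplifies the computations.

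I would first establish a base case under the assumption $p_*=p$, $q_*\le q$, $r_*\le r$: apply Lemma \ref{Holder-Lorentz} termwise in $j$ and then H\"older in $l^{r_*}$, using $l^{r_*}\hookrightarrow l^r$; this bounds $\|f\|_{B^{s_*,r}_{p,q}}$ by the desired product and, combined with the embedding $B^{s_*,r}_{p,q}\hookrightarrow B^{s,r}_{p,q}$ from Theorem \ref{ebd-inhom-B}, handles the $p_*=p$ portion of condition (i). Condition (ii) is handled by applying the real interpolation identity of Lemma \ref{interpolation-Lp} termwise, which for $p_1\neq p_2$ gives $\|\Delta_j f\|_{L^{p,q}}\lesssim\|\Delta_j f\|_{L^{p_1,q_1}}^{1-\theta}\|\Delta_j f\|_{L^{p_2,q_2}}^\theta$ for arbitrary $q$, followed by H\"older in $l^{r_*}$. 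Condition (iii) uses the base case at the stronger level $(s_*,p_*,q_*,r_*)$ and the scaling embedding $B^{s_*,r_*}_{p_*,q_*}\hookrightarrow B^{s,r}_{p,q}$ from parts (iii)--(iv) of Theorem \ref{ebd-inhom-B}; this construction also completes condition (i) in the subcase $p_*<p$. Condition (iv) is the one case with no exact TLL analog: since $s_1\neq s_2$, I would invoke Lemma \ref{itp-sum} applied to the scalar sequence $a_j=\|\Delta_j^\psi f\|_{L^{p,q}}$ to obtain a bound at the level $B^{s_*,1}_{p,q}$, then use $L^{p,q_i}\hookrightarrow L^{p,q}$ and $l^{r_i}\hookrightarrow l^\infty$ to recover the Besov-Lorentz source norms and a final Theorem \ref{ebd-inhom-B}(i)--(ii) embedding to land in $B^{s,r}_{p,q}$.

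Conditions (v)--(viii) are derived by composing the previous cases along the lines of Cases IV--VII in the proof of Theorem \ref{thm-itp-inhomf}. Condition (v) is the base case at $(s_*,p,q_*,\infty)$ followed by Theorem \ref{ebd-inhom-B}(ii). For (vi), I would choose auxiliary exponents $p_1<\overline{p}_1<p<\overline{p}_2<p_2$ with $2/p=1/\overline{p}_1+1/\overline{p}_2$ and weights $\mu,\lambda\in(0,1)$ tuned so that (v) applies at each intermediate level; (ii) then combines the two resulting inequalities. Condition (vii) is the base case at $(s_*,p_*,q_*,\infty)$ followed by the strict embedding in Theorem \ref{ebd-inhom-B}(iv). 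Finally, (viii) is obtained by picking $\overline{p}$ and $\tilde p$ very close to $p$ so that (vii) applies at both levels, and then combining via (ii).

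The main obstacle I anticipate is handling the limiting endpoints $p,p_1,p_2\in\{1,\infty\}$ permitted by the statement, since Theorem \ref{ebd-inhom-B} is formulated only for $1\le p_i<\infty$. For these values one has to invoke Remark \ref{rmk-ebd-inhom-B}; moreover, in the constructions for (vi) and (viii) the auxiliary exponents $\overline{p}_i$ and $\tilde p$ must be kept strictly in $(1,\infty)$ while still satisfying all the required strict inequalities, which is delicate when $p$ or one of the $p_i$ is infinite (so that $\overline{p}_i$ and $\tilde p$ must be chosen very large rather than close to a finite value). The remaining structural change, from $L^{p,q}(l^r)$ to $l^r(L^{p,q})$, is harmless since the key factorization is pointwise in the frequency index and the two norms decouple cleanly under H\"older's inequality.
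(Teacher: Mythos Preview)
Your proposal is correct in broad outline and matches the paper's case-by-case structure for conditions (i)--(v) and (vii).  For (vi) and (viii), however, your approach diverges from the paper's and is more complicated than necessary.

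For (vi) you transplant the auxiliary-exponent construction from Case~V of Theorem~\ref{thm-itp-inhomf}.  This can be made to work, but it overlooks a simplification specific to the Besov scale: condition (ii) here does \emph{not} require $s_1=s_2$ (Lemma~\ref{interpolation-Lp} applied termwise already gives $\|\Delta_j f\|_{L^{p,q}}\lesssim\|\Delta_j f\|_{L^{p_1,q_1}}^{1-\theta}\|\Delta_j f\|_{L^{p_2,q_2}}^{\theta}$ for any $s_1,s_2$).  Hence (vi) is a one-liner: apply (ii) with target $B^{s_*,\infty}_{p,q}$ and then the embedding $B^{s_*,\infty}_{p,q}\hookrightarrow B^{s,r}_{p,q}$ from Theorem~\ref{ebd-inhom-B}(ii).

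For (viii) you propose perturbing $p$ and combining via (ii), invoking (vii) at each intermediate level.  There is a subtle gap: the direct TLL-style choice of $\overline{p},\tilde p,\lambda,\mu$ yields the \emph{equality} $\sigma_\lambda-s=n/\pi_\lambda-n/\overline{p}$, not the strict inequality that (vii) requires; and if you fall back on (iii) instead, its constraint $r_*\le r$ reappears at the intermediate level and cannot be absorbed when combining via (ii) (whose own $r_*\le r$ hypothesis then forces $r=\infty$).  One can repair this by perturbing $\overline{p},\tilde p$ further to force strict inequality while retaining $(1-\nu)/\overline{p}+\nu/\tilde p=1/p$ and $(1-\nu)\lambda+\nu\mu=\theta$, but this requires juggling five parameters against two equality and several strict-inequality constraints.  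The paper instead perturbs in $s$: for small $\delta$ it applies (iii) with targets $B^{s\pm\delta,\infty}_{p,1}$ (same $p$, so the intermediate $r$-constraint is harmless with $r=\infty$), and then combines via (iv), which places no condition on $r$ whatsoever.  This is both shorter and avoids all of the difficulties you flag concerning the endpoints $p,p_i\in\{1,\infty\}$, since the auxiliary parameters never leave the original $p$.
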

\begin{proof} Assume that $f\in \ihsb{s_1}{p_1}{q_1}{r_1} \cap \ihsb{s_2}{p_2}{q_2}{r_2}$.

\vspace{0.2cm}

\noindent \emph{Case I.}
Suppose that   $p_* =p$, $q_* \le q$, and $r_* \le r$. Then  by  the same argument as   Case I in the proof of Theorem \ref{thm-itp-inhomf},
\begin{align*}
\|f\|_{\ihsb{s}{p}{q}{r}} &\les   \inhomnormb{s_*}{p}{q}{r}{\psi}{f}\\
&\les \left\| \left\{ 2^{(1-\theta) js_1}|\Delta_j^\psi f|^{1-\theta}\right\}_{j\in\mathbb{N}_0} \right\|_{l^{\frac{r_1}{1-\theta}} ( L^{\frac{p_1}{1-\theta},\frac{q_1}{1-\theta}})} \left\| \left\{ 2^{\theta js_2}|\Delta_j^\psi f|^{\theta}\right\} _{j\in\mathbb{N}_0} \right\|_{l^{\frac{r_2}{\theta}} ( L^{\frac{p_2}{\theta},\frac{q_2}{\theta}}) }\\
	& =  \|f\|^{1-\theta}_{\ihsb{s_1}{p_1}{q_1}{r_1}} \|f\|^\theta_{\ihsb{s_2}{p_2}{q_2}{r_2}}.
\end{align*}

\noindent \emph{Case II.} Suppose that $p_*  =p$, $p_1 \neq p_2$, and $r_* \le r$.
Then by Lemma \ref{interpolation-Lp} and   H\"older's inequality,
\begin{align*}
\|f\|_{\ihsb{s}{p}{q}{r}}  & \les   \left\| \left\{2^{js_*}\|\Delta_j^\psi f\|_{L^{p,q}}  \right\}_{j\in\mathbb{N}_0} \right\|_{l^{r} }\\
	&\les \left\| \left\{ 2^{j(1-\theta)s_1}\|\Delta_j^\psi f\|_{L^{p_1,q_1}}^{1-\theta} \right\}_{j\in\mathbb{N}_0} \right\|_{l^{\frac{r_1}{1-\theta}} } \left\| \left\{  2^{j\theta s_2}\|\Delta_j^\psi f\|_{L^{p_2,q_2}}^{\theta} \right\} _{j\in\mathbb{N}_0} \right\|_{l^{\frac{r_2}{\theta}}  }\\
	&= \|f\|^{1-\theta}_{\ihsb{s_1}{p_1}{q_1}{r_1}} \|f\|^\theta_{\ihsb{s_2}{p_2}{q_2}{r_2}}.
\end{align*}

\noindent \emph{Case III.}
Suppose that  $p_* < p$ and  $r_* \le r$.
Then by Case I, we have
$$\|f\|_{\ihsb{s_*}{p_*}{\infty}{r}} \les  \|f\|_{\ihsb{s_1}{p_1}{q_1}{r_1}}^{1-\theta}\|f\|_{\ihsb{s_2}{p_2}{q_2}{r_2}}^\theta.$$
Since $s_*- s \ge n /p_* -n /p  > 0$, it follows from  Theorem \ref{ebd-inhom-B} and  Remark \ref{rmk-ebd-inhom-B} that
$$
\|f\|_{\ihsb{s}{p}{q}{r}}\les \|f\|_{\ihsb{s_1}{p_1}{q_1}{r_1}}^{1-\theta}\|f\|_{\ihsb{s_2}{p_2}{q_2}{r_2}}^\theta.
$$
Sufficiency of the condition (i) is proved by Cases I and III.

\noindent \emph{Case IV.} If  $s_1\neq s_2$, $p=p_1=p_2$, and $\max(q_1,q_2)\le q$, then by   Lemma \ref{itp-sum},
\begin{align*}
\|f\|_{\ihsb{s}{p}{q}{r}} & \les  \left\| \left\{2^{js_*}\|\Delta_j^\psi f\|_{L^{p,q}}  \right\}_{j\in\mathbb{N}_0} \right\|_{l^{1} } \\
	& \les \left\| \left\{2^{js_1}\|\Delta_j^\psi f\|_{L^{p,q}}  \right\}_{j\in\mathbb{N}_0} \right\|_{l^{\infty} }^{1-\theta} \left\| \left\{2^{js_2}\|\Delta_j^\psi f\|_{L^{p,q}}  \right\}_{j\in\mathbb{N}_0} \right\|_{l^{\infty} }^{\theta}   \\
&\les \|f\|_{\ihsb{s_1}{p_1}{q_1}{r_1}} ^{1-\theta} \|f\|_{\ihsb{s_2}{p_2}{q_2}{r_2}}^\theta.
\end{align*}

\noindent \emph{Case V.} If $s_* >  s$, $p_*=p$, and $q_*\le q$, then by Theorem \ref{ebd-inhom-B} and Case I,
$$
\|f\|_{\ihsb{s}{p}{q}{r}}\les\|f\|_{\ihsb{s_*}{p}{q}{\infty}} \les  \|f\|_{\ihsb{s_1}{p_1}{q_1}{r_1}}^{1-\theta}\|f\|_{\ihsb{s_2}{p_2}{q_2}{r_2}}^\theta  .
$$

\noindent \emph{Case VI.}
 If $s_* >  s$,  $p_* =p$, and $p_1 \neq p_2$, then by Theorem \ref{ebd-inhom-B} and Case II,
$$
\|f\|_{\ihsb{s}{p}{q}{r}}\les \|f\|_{\ihsb{s_*}{p}{q}{\infty}} \les  \|f\|_{\ihsb{s_1}{p_1}{q_1}{r_1}}^{1-\theta}\|f\|_{\ihsb{s_2}{p_2}{q_2}{r_2}}^\theta.
$$

\noindent \emph{Case VII.} If $s_* -s > n/p_* -n/p >0$, then by Theorem \ref{ebd-inhom-B} and Case III,
$$\|f\|_{\ihsb{s}{p}{q}{r}}\les \|f\|_{\ihsb{s_{*}}{p_{*}}{\infty}{\infty}} \les \|f\|_{\ihsb{s_1}{p_1}{q_1}{r_1}}^{1-\theta}\|f\|_{\ihsb{s_2}{p_2}{q_2}{r_2}}^{\theta}.  $$

\noindent \emph{Case VIII.}
Suppose that $s_* -s = n/p_* -n/p >0$ and   $s_2 -  s_1 \neq  n/p_2 -n/p_1$.
Choose a nonzero number $\delta$ such that
$$
s+|\delta| < s_*,
$$
$$
0<\lambda := \frac{s-\delta - n/p-s_1 + n/p_1}{s_2 -n/p_2 -s_1 + n/p_1}<\theta,
$$
$$
\theta < \mu := \frac{s+\delta- n /p-s_1 + n/p_1}{s_2 -n/p_2 -s_1 + n/p_1}<1,
$$
$$
\frac{1}{p} < \frac{1-\lambda}{p_1} + \frac{\lambda}{p_2}, \quad\text{and}\quad
\frac{1}{p} < \frac{1-\mu}{p_1} + \frac{\mu}{p_2}.
$$
Note that
\[
(1-\lambda)s_1+ \lambda s_2 -(s-\delta) = n \left( \frac{1-\lambda}{p_1} + \frac{\lambda}{p_2}  - \frac{1}{p}  \right)>0
\]
and
\[
(1-\mu)s_1+ \mu s_2 -(s+\delta ) = n \left( \frac{1-\mu}{p_1} + \frac{\mu}{p_2}  - \frac{1}{p}  \right)>0 .
\]
Hence by Case III, we have
\begin{equation*}
 \|f\|_{\ihsb{s-\delta}{{p}}{1}{\infty}} \les \|f\|_{\ihsb{s_1}{p_1}{q_1}{r_1}}^{1-\lambda} \|f\|_{\ihsb{s_2}{p_2}{q_2}{r_2}}^\lambda
\quad\mbox{and}\quad
\|f\|_{\ihsb{s+\delta}{{p}}{1}{\infty}} \les \|f\|_{\ihsb{s_1}{p_1}{q_1}{r_1}}^{1-\mu} \|f\|_{\ihsb{s_2}{p_2}{q_2}{r_2}}^\mu.
\end{equation*}
Therefore, by Case IV, we get
\begin{align*}
\|f\|_{\ihsb{s}{p}{q}{r}} & \les \|f\|_{\ihsb{s-\delta}{p}{1}{\infty}}^{1/2} \|f\|_{\ihsb{s+\delta}{p}{1}{\infty}}^{1/2} \\
	& \les \left( \|f\|_{\ihsb{s_1}{p_1}{q_1}{r_1}}^{1-\lambda} \|f\|_{\ihsb{s_2}{p_2}{q_2}{r_2}}^\lambda \right)^{1/2} \left( \|f\|_{\ihsb{s_1}{p_1}{q_1}{r_1}}^{1-\mu} \|f\|_{\ihsb{s_2}{p_2}{q_2}{r_2}}^\mu \right)^{1/2} \\
	& = \|f\|_{\ihsb{s_1}{p_1}{q_1}{r_1}}^{1-\theta} \|f\|_{\ihsb{s_2}{p_2}{q_2}{r_2}}^{\theta},
\end{align*}
which completes the proof.
\end{proof}

\begin{thm}\label{thm-itp-homb}
Assume that
\[
 s_* -s   =   \frac{n}{p_*}-\frac{n}{p} \ge 0 .
\]
Then the interpolation inequality
\begin{equation*}
\|f\|_{\hsb{s}{p}{q}{r}}\lesssim \|f\|_{\hsb{s_1}{p_1}{q_1}{r_1}}^{1-\theta}\|f\|_{\hsb{s_2}{p_2}{q_2}{r_2}}^\theta
\end{equation*}
holds for all $f \in\hsb{s_1}{p_1}{q_1}{r_1} \cap \hsb{s_2}{p_2}{q_2}{r_2}$, if one of the following conditions is satisfied:
\begin{enumerate}[label = \textup{(\roman*)}]
\item    $ q_* \le q$, $r_* \le r$.
\item $s_* =s $, $p_1 \neq p_2$, $r_* \le r$.
\item   $s_* >  s $, $r_* \le r$.
\item    $s_* =s$, $s_1\neq s_2$, $p_1=p_2$, $\max(q_1,q_2)\le q$.
\item  $s_* >  s $,  $s_2 -  s_1 \neq  n/p_2 -n/p_1$.
\end{enumerate}
\end{thm}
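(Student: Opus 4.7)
The plan is to mirror the proof of Theorem \ref{thm-itp-inhomb}, replacing the inhomogeneous Littlewood-Paley operators $\{\Delta_j^\psi\}_{j\in\mathbb{N}_0}$ by the homogeneous ones $\{\Delta_j^\vp\}_{j\in\mathbb{Z}}$ and using Theorem \ref{ebd-hom-B} in place of Theorem \ref{ebd-inhom-B}. The crucial structural difference is that, in the homogeneous setting, the sum ranges over $\mathbb{Z}$, so the trivial bound $2^{js_*}\ge 2^{js}$ (used for $s_*>s$ in the inhomogeneous case) is unavailable; only scaling-preserving embeddings from Theorem \ref{ebd-hom-B}(ii) may be used to move between different $p$'s. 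This explains why the conditions in the homogeneous case collapse into the five listed.

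Cases (i)--(iv) are direct adaptations. For Case (i) with $p_*=p$ (hence $s_*=s$), I would factor
\[
2^{js_*}|\Delta_j^\vp f| = \bigl(2^{js_1}|\Delta_j^\vp f|\bigr)^{1-\theta}\bigl(2^{js_2}|\Delta_j^\vp f|\bigr)^{\theta},
\]
apply Hölder in $l^{r_1/(1-\theta)}\cdot l^{r_2/\theta}$ and then Lemma \ref{Holder-Lorentz} in $L^{p_1/(1-\theta),q_1/(1-\theta)}\cdot L^{p_2/\theta,q_2/\theta}$, and finish with the embeddings $L^{p,q_*}\hookrightarrow L^{p,q}$ and $l^{r_*}\hookrightarrow l^r$. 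When $p_*<p$ (so $s_*>s$), first establish the estimate at the target $\hsb{s_*}{p_*}{q_*}{r_*}$ and then embed into $\hsb{s}{p}{q}{r}$ by Theorem \ref{ebd-hom-B}(ii), which only requires $r_*\le r$. Case (ii) interpolates $L^{p,q}$ via Lemma \ref{interpolation-Lp} pointwise in $j$, followed by Hölder in $l^r$. Case (iii) reduces to Case (i) at the target $\hsb{s_*}{p_*}{\infty}{r}$ and then invokes Theorem \ref{ebd-hom-B}(ii). Case (iv), where $p_1=p_2=p$, applies Lemma \ref{itp-sum} termwise on the sequence $\{\|\Delta_j^\vp f\|_{L^{p,q}}\}_{j\in\mathbb{Z}}$ and uses Theorem \ref{ebd-hom-B}(i) to absorb the $r_i$ indices.

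The main obstacle is Case (v): $s_*>s$ together with $s_2-s_1\neq n/p_2-n/p_1$. The shift trick from Case VIII of Theorem \ref{thm-itp-inhomb} (translating $s$ by $\pm\delta$ at fixed $p$) would violate the scaling equality $s_*-s=n/p_*-n/p$ in the homogeneous category, so I would work along scaling lines instead. Set $\sigma_i = s_i - n/p_i$; the hypothesis of (v) is exactly $\sigma_1\neq\sigma_2$. Choose $\lambda<\theta<\mu$ close to $\theta$ and put $\nu = (\theta-\lambda)/(\mu-\lambda)\in(0,1)$. Pick $\bar p$ close to but distinct from $p$, determine $\tilde p$ by $1/p = (1-\nu)/\bar p + \nu/\tilde p$, and set $\bar s = n/\bar p + (1-\lambda)\sigma_1 + \lambda\sigma_2$, $\tilde s = n/\tilde p + (1-\mu)\sigma_1 + \mu\sigma_2$. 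For $\lambda,\mu$ sufficiently close to $\theta$, the strict inequalities $\bar p>p_*^\lambda$ and $\tilde p>p_*^\mu$ hold, so Case (iii) yields
\[
\|f\|_{\hsb{\bar s}{\bar p}{\infty}{r}}\lesssim \|f\|_{\hsb{s_1}{p_1}{q_1}{r_1}}^{1-\lambda}\|f\|_{\hsb{s_2}{p_2}{q_2}{r_2}}^{\lambda}
\]
together with the analogous bound for $(\tilde s,\tilde p,\mu)$. A concluding application of Case (ii) with interpolation parameter $\nu$ combines the two bounds; the identity $(1-\nu)\lambda+\nu\mu=\theta$ delivers the exponents $1-\theta$ and $\theta$, and the construction automatically gives $(1-\nu)\bar s+\nu\tilde s = s$ with $\bar p\neq\tilde p$, as required.

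The principal nuisance is the bookkeeping: verifying that $\bar p,\tilde p$ can be chosen both close to $p$ and in the admissible ranges $(p_*^\lambda,\infty)$, $(p_*^\mu,\infty)$, and that $\bar p\neq\tilde p$ is preserved by the defining relation for $\tilde p$. This is a compactness/continuity argument at $\lambda=\mu=\theta$ of the same flavor as in Case VII of Theorem \ref{thm-itp-inhomf}, and $\sigma_1\neq\sigma_2$ is exactly what guarantees that the construction is non-degenerate as the auxiliary parameters converge.
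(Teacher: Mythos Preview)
Your handling of Cases (i)--(iv) is fine and matches the paper. The issue is your treatment of Case (v): your premise that ``the shift trick from Case VIII of Theorem \ref{thm-itp-inhomb} would violate the scaling equality $s_*-s=n/p_*-n/p$'' is false, and this leads you to build an unnecessary (and more delicate) detour.

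Look again at how $\lambda$ and $\mu$ are defined in Case VIII. With $\sigma_i=s_i-n/p_i$ and the standing hypothesis $s-n/p=(1-\theta)\sigma_1+\theta\sigma_2$, one has
\[
\lambda=\frac{(s-\delta)-n/p-\sigma_1}{\sigma_2-\sigma_1},\qquad \mu=\frac{(s+\delta)-n/p-\sigma_1}{\sigma_2-\sigma_1},
\]
which is \emph{exactly} the requirement $(s\mp\delta)-n/p=(1-\lambda)\sigma_1+\lambda\sigma_2$ (resp.\ with $\mu$). In other words, the targets $(s\mp\delta,p)$ paired with the new interpolation parameters $\lambda,\mu$ sit precisely on the scaling line, so the homogeneous embedding of Theorem \ref{ebd-hom-B}(ii) is available. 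The positivity conditions $1/p<(1-\lambda)/p_1+\lambda/p_2$ and the analogue for $\mu$ hold for small $|\delta|$ by continuity from $1/p<1/p_*$ (i.e.\ $s_*>s$). Then Case (iv) with the pair $(s-\delta,p)$, $(s+\delta,p)$ and $q$-indices $1$ finishes exactly as written in the inhomogeneous proof.

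So the paper's one-line proof---replace Theorem \ref{ebd-inhom-B} by Theorem \ref{ebd-hom-B} throughout Theorem \ref{thm-itp-inhomb}---covers Case (v) without modification. Your alternative route (shifting $p$ in the style of Case VII of Theorem \ref{thm-itp-inhomf} and closing with Case (ii)) can be made to work, but it requires extra bookkeeping to ensure $\bar p\neq\tilde p$ and the admissibility of the intermediate targets, none of which is needed once you see that Case VIII already respects scaling.
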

\begin{proof}
The  theorem follows by using Theorem \ref{ebd-hom-B} instead of Theorem \ref{ebd-inhom-B} in the proof of Theorem  \ref{thm-itp-inhomb}.
\end{proof}

From    Remark \ref{rmk-conditions}, Theorems \ref{thm-itp-inhomb}, and  \ref{thm-itp-homb}, we immediately obtain:

\begin{thm}\label{thm-necessity-00-b}
The interpolation inequality
\[
\|f\|_{\ihsb{s}{p}{q_*}{r_*}}\lesssim \|f\|_{\ihsb{s_1}{p_1}{q_1}{r_1}}^{1-\theta}\|f\|_{\ihsb{s_2}{p_2}{q_2}{r_2}}^\theta
\]
holds for all  $f\in B^{s_1,r_1}_{p_1,q_1} \cap B^{s_2,r_2}_{p_2,q_2}$, if and only if $$
s_* -s  \ge \frac{n}{p_*} -\frac{n}{p}\ge 0.
$$
\end{thm}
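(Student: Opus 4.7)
The statement is an if-and-only-if that factors cleanly into two independent parts, and both are already on the table once Theorem~\ref{thm-itp-inhomb} and Remark~\ref{rmk-conditions} are invoked. My plan is therefore to present the theorem as an assembly lemma rather than as an independent argument.

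For sufficiency, I would observe that with $q=q_{*}$ and $r=r_{*}$, the condition (i) of Theorem~\ref{thm-itp-inhomb}, namely $q_{*}\le q$ and $r_{*}\le r$, is satisfied as equality. Combined with the hypothesis $s_{*}-s\ge n/p_{*}-n/p\ge 0$, this places us squarely in the situation of that theorem, so the desired interpolation inequality follows at once. No new estimate is needed here.

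For necessity, I would invoke the scaling calculation already carried out in Remark~\ref{rmk-conditions}. The point is that if $f\in\mathscr{S}$ has $\hat f$ supported in an annulus $\{a\le|\xi|\le b\}$ with $3/4<a<b<1$, then the Littlewood--Paley pieces $\psi_{j}\widehat{f(2^{k}\cdot)}$ vanish except for at most one index, giving the explicit identities
\[
\|f(2^{k}\cdot)\|_{B^{s,r}_{p,q}}=
\begin{cases}
2^{-kn/p}\|f\|_{L^{p,q}} & k\le 0,\\
2^{k(s-n/p)}\|f\|_{L^{p,q}} & k\ge 1.
\end{cases}
\]
Plugging $f(2^{k}\cdot)$ into the hypothesized interpolation inequality and letting $k\to -\infty$ forces $n/p\ge n/p_{*}$, while $k\to +\infty$ forces $s_{*}-s\ge n/p_{*}-n/p$. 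Together these give the claimed necessary condition.

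I do not expect a real obstacle: both directions are already prepared in the excerpt (the homogeneous case of Remark~\ref{rmk-conditions} handles the scaling computation for Besov--Lorentz norms, and Theorem~\ref{thm-itp-inhomb}(i) covers the sufficiency). The only point worth flagging in the write-up is that Remark~\ref{rmk-conditions} is stated for Schwartz test functions and one should check that $f\in\mathscr{S}$ with $\hat f$ supported in the annulus indeed belongs to $B^{s_{1},r_{1}}_{p_{1},q_{1}}\cap B^{s_{2},r_{2}}_{p_{2},q_{2}}$; this is immediate from the rapid decay of $f$ and its derivatives, so no further argument is required.
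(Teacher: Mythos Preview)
Your proposal is correct and matches the paper's own argument: the paper likewise obtains sufficiency directly from Theorem~\ref{thm-itp-inhomb}(i) with $q=q_*$, $r=r_*$, and necessity from the scaling computation in Remark~\ref{rmk-conditions}. There is nothing to add.
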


\begin{thm}\label{thm-necessity-00-b-hom}
The interpolation inequality
\[
\|f\|_{\hsb{s}{p}{q_*}{r_*}}\lesssim \|f\|_{\hsb{s_1}{p_1}{q_1}{r_1}}^{1-\theta}\|f\|_{\hsb{s_2}{p_2}{q_2}{r_2}}^\theta
\]
holds for all  $f\in \dot B^{s_1,r_1}_{p_1,q_1} \cap \dot B^{s_2,r_2}_{p_2,q_2}$, if and only if $$
s_* -s  = \frac{n}{p_*} -\frac{n}{p}\ge 0.
$$
\end{thm}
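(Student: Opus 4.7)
The plan is to derive Theorem \ref{thm-necessity-00-b-hom} as an immediate consequence of the two main building blocks already established for homogeneous Besov-Lorentz spaces, namely the scaling obstruction in Remark \ref{rmk-conditions} and the sufficient-condition machinery of Theorem \ref{thm-itp-homb}. Both directions are essentially packaged already, so the work amounts to identifying the precise specialization of each.

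For the necessity direction, I would invoke Remark \ref{rmk-conditions} directly. The remark establishes, via the standard rescaling test function $f(2^k \cdot)$ with $\widehat f$ supported in a thin annulus, that for any inequality of the form \eqref{itp-eq-homb} to hold one must have $s_* - s = n/p_* - n/p \ge 0$ (the equality, rather than the inequality in the inhomogeneous case, comes from the homogeneous scaling being forced in both directions $k \to \pm\infty$). Specializing to $q = q_*$ and $r = r_*$ changes nothing in the scaling argument, so the stated necessary condition is inherited verbatim.

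For sufficiency, assume $s_* - s = n/p_* - n/p \ge 0$. I would apply Theorem \ref{thm-itp-homb} with the particular choice $q = q_*$ and $r = r_*$. Then condition (i) of that theorem, namely $q_* \le q$ and $r_* \le r$, holds trivially with equality, so Theorem \ref{thm-itp-homb} delivers
\[
\|f\|_{\hsb{s}{p}{q_*}{r_*}}\lesssim \|f\|_{\hsb{s_1}{p_1}{q_1}{r_1}}^{1-\theta}\|f\|_{\hsb{s_2}{p_2}{q_2}{r_2}}^\theta
\]
for all $f$ in the intersection, which is exactly the claim.

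Since both ingredients are already in hand, I do not anticipate any real obstacle; the only mild care needed is to check that Remark \ref{rmk-conditions}, which is stated for \eqref{itp-eq-homf} with an aside about Theorem \ref{ebd-hom-TLL}, indeed applies to the Besov-Lorentz variant \eqref{itp-eq-homb} in the homogeneous case, giving equality $s_* - s = n/p_* - n/p$ rather than only an inequality. This is built into the remark by the symmetric use of $k \le 0$ and $k \ge 1$ in the rescaling computation, so no additional argument is required. The whole proof should therefore reduce to two sentences mirroring the proof sketch the authors give for Theorem \ref{thm-necessity-00-b}.
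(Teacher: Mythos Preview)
Your proposal is correct and matches the paper's approach exactly: the authors state that Theorem \ref{thm-necessity-00-b-hom} follows immediately from Remark \ref{rmk-conditions} (necessity via the scaling argument, which for the homogeneous Besov-Lorentz inequality \eqref{itp-eq-homb} yields the equality $s_*-s = n/p_*-n/p \ge 0$) and Theorem \ref{thm-itp-homb} (sufficiency via condition (i) with $q=q_*$, $r=r_*$). Your observation that Remark \ref{rmk-conditions} explicitly covers the homogeneous Besov-Lorentz case is accurate, so nothing further is needed.
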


Using the necessity part of Theorem \ref{ebd-inhom-B}, we can obtain several results on necessary conditions for interpolation inequalities in inhomogeneous  Besov-Lorentz spaces.

\begin{thm}\label{thm-necessity-01-b}
Let   $s , s_1 , s_2 \in \mathbb{R}$, $1 <  p <\infty$, $1<p_1,p_2 \le  \infty$, $1\le r_1,r_2\le\infty$ and $0<\theta<1$.
Then the interpolation inequality
\begin{equation}\label{thm-necessity-inequality-01-b}
\|f\|_{B^{s,r_*}_{p,1}}\les \|f\|_{B^{s_1,r_1}_{p_1,\infty}}^{1-\theta}\|f\|_{B^{s_2,r_2}_{p_2,\infty}}^\theta
\end{equation}
holds for all $f\in B^{s_1,r_1}_{p_1,\infty} \cap B^{s_2,r_2}_{p_2,\infty}$, if and only if one of the following conditions is satisfied:
\begin{enumerate}[label = \textup{(\roman*)}]
\item $s_* \ge s$, $p_*=p$, $p_1\neq p_2$.
\item $s_*-s \ge n/p_* - n/p>0$.
\end{enumerate}
\end{thm}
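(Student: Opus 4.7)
The plan is to obtain sufficiency directly from Theorem \ref{thm-itp-inhomb}, and derive necessity via the standard equivalence between interpolation inequalities and real-interpolation embeddings, combined with the Seeger--Trebels characterization in Theorem \ref{ebd-inhom-B}.

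For sufficiency, assume one of (i) or (ii) holds. If (i) holds with $s_* = s$, then Theorem \ref{thm-itp-inhomb}(ii) applies; if (i) holds with $s_* > s$, then Theorem \ref{thm-itp-inhomb}(vi) applies. For (ii), if $s_* - s > n/p_* - n/p > 0$, use Theorem \ref{thm-itp-inhomb}(vii); if $s_* - s = n/p_* - n/p > 0$, then $p_* < p$, so Theorem \ref{thm-itp-inhomb}(iii) applies. In every case the choice $r = r_*$ satisfies the requirement $r_* \le r$.

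For necessity, suppose the inequality holds. By Remark \ref{rmk-conditions}, $s_* - s \ge n/p_* - n/p \ge 0$. If $n/p_* > n/p$, we are in case (ii). Otherwise $p_* = p$ and $s_* \ge s$, and I claim $p_1 \neq p_2$. Assume for contradiction that $p_1 = p_2 = p$. Since $B^{s, r_*}_{p, 1}$ is a Banach space for $1 < p < \infty$, the criterion recalled from \cite[Theorem 3.11.4]{Bergh} makes the interpolation inequality equivalent to
\[
(B^{s_1, r_1}_{p, \infty}, B^{s_2, r_2}_{p, \infty})_{\theta, 1} \hookrightarrow B^{s, r_*}_{p, 1}.
\]
If $s_1 = s_2$ (forcing $s_1 = s_2 = s_*$), then the intersection $B^{s_*, r_1}_{p, \infty} \cap B^{s_*, r_2}_{p, \infty}$ coincides with $B^{s_*, \min(r_1, r_2)}_{p, \infty}$ and embeds continuously into the interpolation space, hence into $B^{s, r_*}_{p, 1}$; Theorem \ref{ebd-inhom-B} would then force the Lorentz-index comparison $\infty \le 1$, which is absurd. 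If instead $s_1 \neq s_2$, the embedding $L^p \hookrightarrow L^{p, \infty}$ yields $B^{s_i, r_i}_p \hookrightarrow B^{s_i, r_i}_{p, \infty}$, and the monotonicity of real interpolation together with Theorem \ref{lem-itp-homb} gives
\[
B^{s_*, 1}_{p, p} = (B^{s_1, r_1}_p, B^{s_2, r_2}_p)_{\theta, 1} \hookrightarrow (B^{s_1, r_1}_{p, \infty}, B^{s_2, r_2}_{p, \infty})_{\theta, 1} \hookrightarrow B^{s, r_*}_{p, 1};
\]
Theorem \ref{ebd-inhom-B} then requires $p \le 1$, contradicting $p > 1$.

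The main obstacle is the subcase $s_1 \neq s_2$ in the necessity argument: the Besov interpolation identity we have (Theorem \ref{lem-itp-homb}) covers only classical $L^p$-based Besov spaces rather than Lorentz-based ones, so one bridges the gap with the harmless embedding $L^p \hookrightarrow L^{p, \infty}$ before invoking Theorem \ref{lem-itp-homb}. Everything else reduces to a matter of reading off the appropriate case of Theorem \ref{thm-itp-inhomb} or ruling out an embedding via Theorem \ref{ebd-inhom-B}.
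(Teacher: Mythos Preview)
Your proof is correct and follows essentially the same route as the paper: sufficiency is read off from Theorem \ref{thm-itp-inhomb}, and necessity combines Remark \ref{rmk-conditions} with the Bergh--L\"ofstr\"om criterion and Theorem \ref{lem-itp-homb} to produce an embedding $B^{s_*,1}_{p_*} \hookrightarrow B^{s,r_*}_{p,1}$ that is then ruled out by Theorem \ref{ebd-inhom-B}. The only cosmetic difference is that the paper organizes the necessity contrapositively (assume $p_1=p_2$ and deduce $p_*<p$), whereas you assume $p_*=p$ and $p_1=p_2$ simultaneously and derive the contradiction directly; your separate treatment of the $s_1=s_2$ subcase via the intersection is equivalent to the paper's remark that the relevant embedding ``holds trivially'' in that case.
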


\begin{proof} By Theorem \ref{thm-itp-inhomb}, it suffices to prove the necessity part of the theorem.
Suppose that $\eqref{thm-necessity-inequality-01-b}$ holds for all $f\in B^{s_1,r_1}_{p_1,\infty} \cap B^{s_2,r_2}_{p_2,\infty}$. Then since
$$
s_* -s  \ge \frac{n}{p_*} -\frac{n}{p}\ge 0,
$$
 there are two possibilities:
 (a) $s_* \ge s$, $p_*=p$ and (b) $s_*-s \ge n/p_* - n/p>0$.
 Suppose that $p_1=p_2$. If $s_1\neq s_2$, then by Theorem \ref{lem-itp-homb},
 $$
 B^{s_*,1}_{p_*} \hookrightarrow (B^{s_1,r_1}_{p_1}, B^{s_2,r_2}_{p_2})_{\theta,1}\hookrightarrow (B^{s_1,r_1}_{p_1,\infty}, B^{s_2,r_2}_{p_2,\infty})_{\theta,1}.
 $$
 which holds trivially when $s_1 =s_2$.
 Moreover, since $\ihsb{s}{p}{1}{r_*}$ is a Banach space,
 $$
  (B^{s_1,r_1}_{p_1,\infty}, B^{s_2,r_2}_{p_2,\infty})_{\theta,1}\hookrightarrow \ihsb{s}{p}{1}{r_*}.
  $$
By Theorem \ref{ebd-inhom-B}, we have $p_*<p$. Consequently, $p_*=p$ implies $p_1\neq p_2$. This completes the proof.
\end{proof}

\begin{thm}\label{thm-necessity-0-b}
Let   $s , s_1 , s_2 \in \mathbb{R}$, $1 < p,p_1,p_2 < \infty$, $1 \le   q_1 , q_2 \le \infty$, and $0<\theta<1$.
If the interpolation inequality
\begin{equation}\label{thm-necessity-inequality-0-b}
\|f\|_{B^{s,1}_{p,q_*}}\les \|f\|_{B^{s_1,\infty}_{p_1,q_1}}^{1-\theta}\|f\|_{B^{s_2,\infty}_{p_2,q_2}}^\theta
\end{equation}
holds for all $f\in B^{s_1,\infty}_{p_1,q_1} \cap B^{s_2,\infty}_{p_2,q_2}$, then one of the following conditions is satisfied:
\begin{enumerate}[label = \textup{(\roman*)}]
\item $s_* =s$, $p_*=p$, $s_1\neq s_2$, $s_2-s_1\neq n/p_2-n/p_1$.
\item $s_* > s$, $p_*=p$.
\item $s_*-s=n/p_*-n/p>0$, $s_2-s_1\neq n/p_2-n/p_1$.
\item $s_*-s>n/p_*-n/p>0$.
\end{enumerate}
\end{thm}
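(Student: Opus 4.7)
The plan is to apply Remark \ref{rmk-conditions}, which forces $s_* - s \ge n/p_* - n/p \ge 0$ whenever the interpolation inequality holds, and then split into sub-cases according to whether these two inequalities are saturated. When $s_* > s$ and $p_* = p$, conclusion (ii) is automatic, and when $s_* - s > n/p_* - n/p > 0$, conclusion (iv) is automatic, so what remains are the two critical cases (A) $s_* = s$ with $p_* = p$, and (B) $s_* - s = n/p_* - n/p > 0$. Throughout I will use the standard equivalence that the inequality is the same as $(X_1, X_2)_{\theta,1} \hookrightarrow B^{s,1}_{p, q_*}$ (the target is a Banach space because $1 < p < \infty$), together with the elementary bound $\|f\|_{(X_1, X_2)_{\theta, 1}} \les \|f\|_{X_1}^{1-\theta}\|f\|_{X_2}^\theta$ on $X_1 \cap X_2$, which promotes any pair of embeddings $Z \hookrightarrow X_1$, $Z \hookrightarrow X_2$ into $Z \hookrightarrow (X_1, X_2)_{\theta, 1}$; here $X_i := B^{s_i, \infty}_{p_i, q_i}$.

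Whenever $s_i - n/p_i = s - n/p$ for $i = 1, 2$ (which covers case (B), and also case (A) when $s_2 - s_1 = n/p_2 - n/p_1$ with $s_1 \neq s_2$), I will imitate the bridging argument from the proof of Theorem \ref{thm-necessity-01-b}. Choose $i_0 \in \{1, 2\}$ with $p_{i_0} = \min(p_1, p_2)$, so that $s_{i_0} = \max(s_1, s_2) > s$, and take $Z = B^{s_{i_0}, \infty}_{p_{i_0}, 1}$. Theorem \ref{ebd-inhom-B}(i) gives $Z \hookrightarrow B^{s_{i_0}, \infty}_{p_{i_0}, q_{i_0}}$, and Theorem \ref{ebd-inhom-B}(iii), which imposes no restriction on the inner Lorentz index, gives $Z \hookrightarrow B^{s_{i_1}, \infty}_{p_{i_1}, q_{i_1}}$ for the other index $i_1$. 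Composing with the hypothesis then forces $Z \hookrightarrow B^{s, 1}_{p, q_*}$; but under common Sobolev the only clause of Theorem \ref{ebd-inhom-B} compatible with the $(s, p)$-parameters is (iii), whose $r$-condition $\infty \le 1$ fails---a contradiction. A nearly identical argument, with $Z = B^{s, \infty}_{p, 1}$ and using clause (i) of Theorem \ref{ebd-inhom-B} on both sides, handles the sub-case of (A) in which $s_1 = s_2 = s$ and $p_1 = p_2 = p$.

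The remaining situation---(A) with $s_1 = s_2 = s$ and $p_1 \neq p_2$---is the main obstacle, since the embedding strategy intrinsically collapses: the $q$-free form of Theorem \ref{ebd-inhom-B}(iii) forces any bridging $Z$ that embeds into both $B^{s, \infty}_{p_i, q_i}$ also to embed into $B^{s, 1}_{p, q_*}$. I will instead produce an explicit counterexample by modulation. Fix $\phi \in \scc$ with $\hat\phi$ supported in a very small ball around $0$, a nonzero vector $\xi_0 \in \mathbb{R}^n$, and a threshold $J$ so that for every $j \ge J$ the translated support $2^j\xi_0 + \supp \hat\phi$ sits in the interior of the region where $\psi_j \equiv 1$ (a suitable choice of Littlewood-Paley partition makes this possible). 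For $K \ge J$ set
\[
f_K(x) = \sum_{j = J}^{K} 2^{-js} \phi(x) e^{i 2^j \xi_0 \cdot x}.
\]
By disjointness of the dyadic spectra, $\Delta_j^\psi f_K = 2^{-js}\phi(x) e^{i 2^j \xi_0 \cdot x}$ for $j \in [J, K]$ and vanishes otherwise, and translation/modulation invariance of Lorentz norms gives $\|\Delta_j^\psi f_K\|_{L^{\tau, \rho}} = 2^{-js}\|\phi\|_{L^{\tau, \rho}}$. Consequently $\|f_K\|_{B^{s, \infty}_{p_i, q_i}} = \|\phi\|_{L^{p_i, q_i}}$ stays uniform in $K$, whereas $\|f_K\|_{B^{s, 1}_{p, q_*}} = (K - J + 1)\|\phi\|_{L^{p, q_*}}$ diverges as $K \to \infty$, contradicting the hypothesized inequality. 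The essential device in this last sub-case is the modulation, which decouples the bounded source supremum from the divergent target sum that the $q$-free Besov Sobolev embedding would otherwise couple.
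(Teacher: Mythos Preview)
Your argument is correct. The bridging device $Z=B^{s_{i_0},\infty}_{p_{i_0},1}$ for the sub-cases with $s_i-n/p_i=s-n/p$ is exactly the paper's argument (the paper normalizes $s_1\le s_2$ and writes $Z=B^{s_2,\infty}_{p_2,1}$). One cosmetic point: when $p_1=p_2$ you should appeal to clause (i) rather than (iii) of Theorem~\ref{ebd-inhom-B} for the embedding into $X_{i_1}$, but this changes nothing.

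The genuine divergence is in how you rule out $s_1=s_2$ in case~(A). The paper passes through Triebel--Lizorkin spaces: combining Lemma~\ref{thm-ebd-FB and BF}, Theorem~\ref{lem-itp-inhomf}, and reiteration gives $F^{s_*,\infty}_{p_*,1}\hookrightarrow (B^{s_1,\infty}_{p_1,q_1},B^{s_2,\infty}_{p_2,q_2})_{\theta,1}\hookrightarrow B^{s,1}_{p,q_*}$, and then an external characterization of $F\hookrightarrow B$ embeddings due to Seeger--Trebels \cite[Theorem~1.2]{seeger} forces $s_*>s$, a contradiction. Your modulated test functions $f_K=\sum_{j=J}^{K}2^{-js}\phi\,e^{i2^j\xi_0\cdot x}$ give a direct, self-contained counterexample that avoids both the interpolation identification of $\ihsf{s}{p}{q}{r}$ and the cited embedding result; it is essentially the frequency-shift construction the paper itself deploys in the proof of Theorem~\ref{ebd-hom-TLL}. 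The paper's route is more uniform once the machinery is available (it does not need to separate $p_1=p_2$ from $p_1\neq p_2$), while yours is more elementary and makes the mechanism---a bounded $l^\infty$ supremum against a divergent $l^1$ sum---transparent.
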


\begin{proof} Suppose that $\eqref{thm-necessity-inequality-0-b}$ holds for all $f\in B^{s_1,\infty}_{p_1,q_1} \cap B^{s_2,\infty}_{p_2,q_2}$. Then
there are four  possibilities:
 (a) $s_* =s$, $p_*=p$, (b) $s_*  > s$, $p_*=p$, (c) $s_*-s = n/p_*-n/p > 0$, and (d) $s_*-s > n/p_*-n/p > 0$.

Suppose that   $s_1=s_2$. Then   by Lemma \ref{thm-ebd-FB and BF}, Theorem \ref{lem-itp-inhomf},  and the reiteration theorem,
$$
F^{s_*,\infty}_{p_*,1}  \hookrightarrow (F^{s_1,\infty}_{p_1,q_1},F^{s_2,\infty}_{p_2,q_2})_{\theta,1} \hookrightarrow (B^{s_1,\infty}_{p_1,q_1},B^{s_2,\infty}_{p_2,q_2})_{\theta,1}\hookrightarrow B^{s,1}_{p,q_*}.
$$
Hence it follows from \cite[Theorem 1.2]{seeger} that $s_* > s $. By contraposition, we have shown that if $s_* =s$, then $s_1 \neq s_2$.

Suppose next that $s_*-s=n/p_*-n/p$ and $s_2-s_1=n/p_2-n/p_1$. We may assume that $s_1 \le s_2$. Then by Theorem \ref{ebd-inhom-B},
$$
B^{s_2,\infty}_{p_2, 1} \hookrightarrow(B^{s_1,\infty}_{p_1,q_1}, B^{s_2,\infty}_{p_2,q_2})_{\theta,1}\quad\text{but}\quad B^{s_2,\infty}_{p_2, 1} \nhra B^{s,1}_{p,q_*}.
$$
Therefore, it follows that if $s_*-s=n/p_*-n/p$, then $s_2-s_1\neq n/p_2-n/p_1$. This completes the proof.
\end{proof}

\begin{thm}\label{thm-necessity-b}
Let   $s , s_1 , s_2 \in \mathbb{R}$, $1 < p,p_1,p_2 < \infty$, and $0<\theta<1$.
If the interpolation inequality
\begin{equation*}
\|f\|_{B^{s,1}_{p,1}}\les \|f\|_{B^{s_1,\infty}_{p_1,\infty}}^{1-\theta}\|f\|_{B^{s_2,\infty}_{p_2,\infty}}^\theta
\end{equation*}
holds for all $f\in B^{s_1,\infty}_{p_1,\infty} \cap B^{s_2,\infty}_{p_2,\infty}$, then one of the following conditions is satisfied:
\begin{enumerate}[label = \textup{(\roman*)}]
\item $s_* =s$, $p_*=p$, $s_1\neq s_2$, $p_1\neq p_2$, $s_2-s_1\neq n/p_2 - n/p_1$.
\item $s_* >s$, $p_*=p$, $p_1 \neq p_2$.
\item $s_*-s=n/p_* - n/p>0$, $s_2-s_1\neq n/p_2 - n/p_1$.
\item $s_*-s>n/p_*-n/p>0$.
\end{enumerate}
\end{thm}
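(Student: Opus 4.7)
The plan is to mirror the derivation of Theorem \ref{thm-necessity} from Theorems \ref{thm-necessity-0} and \ref{thm-necessity-01}: the Besov-Lorentz inequality hypothesized here has the smallest possible $q$ and $r$ on the left and the largest possible $q_i,r_i$ on the right, so it is the strongest inequality in its family, and it immediately implies two weaker inequalities that feed directly into Theorems \ref{thm-necessity-0-b} and \ref{thm-necessity-01-b}. Intersecting their conclusions gives precisely conditions (i)-(iv).

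Concretely, by Theorem \ref{ebd-inhom-B}(i) we have the embeddings $B^{s,1}_{p,1} \hookrightarrow B^{s,1}_{p,\infty}$ and $B^{s,1}_{p,1} \hookrightarrow B^{s,\infty}_{p,1}$, so the hypothesis yields
\[
\|f\|_{B^{s,1}_{p,\infty}} \les \|f\|_{B^{s_1,\infty}_{p_1,\infty}}^{1-\theta}\|f\|_{B^{s_2,\infty}_{p_2,\infty}}^\theta
\quad\mbox{and}\quad
\|f\|_{B^{s,\infty}_{p,1}} \les \|f\|_{B^{s_1,\infty}_{p_1,\infty}}^{1-\theta}\|f\|_{B^{s_2,\infty}_{p_2,\infty}}^\theta.
\]
With $q_1=q_2=\infty$ (hence $q_*=\infty$) the first inequality is exactly the hypothesis of Theorem \ref{thm-necessity-0-b}; with $r_1=r_2=\infty$ (hence $r_*=\infty$) the second is exactly the hypothesis of Theorem \ref{thm-necessity-01-b}.

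Applying Theorem \ref{thm-necessity-0-b} forces one of its four cases (a)-(d), and applying Theorem \ref{thm-necessity-01-b} forces one of its two cases, which I will call (I): $s_*\ge s$, $p_*=p$, $p_1\neq p_2$, or (II): $s_*-s\ge n/p_*-n/p>0$. I then intersect. The combinations (a)$\wedge$(II) ($s_*=s$ vs.\ $s_*>s$), (b)$\wedge$(II), (c)$\wedge$(I), and (d)$\wedge$(I) (each of the latter three forcing both $p_*=p$ and $n/p_*-n/p>0$) are empty. The surviving four intersections (a)$\wedge$(I), (b)$\wedge$(I), (c)$\wedge$(II), (d)$\wedge$(II) correspond exactly to conditions (i), (ii), (iii), (iv) of the theorem, with the condition $p_1\neq p_2$ in (i) and (ii) inherited from (I) and the condition $s_2-s_1\neq n/p_2-n/p_1$ in (i) and (iii) inherited from (a) and (c) of Theorem \ref{thm-necessity-0-b}.

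No real obstacle arises; the argument is a purely formal combination of the two previously established necessity theorems, and the only step that deserves a moment's care is the case analysis above to verify that the empty intersections are genuinely empty and that no extra condition beyond (i)-(iv) survives.
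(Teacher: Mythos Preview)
Your proof is correct and takes essentially the same approach as the paper, which simply states that the theorem immediately follows from Theorems \ref{thm-necessity-01-b} and \ref{thm-necessity-0-b}. You have spelled out in detail the case analysis that the paper leaves implicit, and your intersection argument is accurate.
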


\begin{proof} The theorem immediately follows from Theorems \ref{thm-necessity-01-b} and \ref{thm-necessity-0-b}.
\end{proof}

Compared with homogeneous Triebel-Lizorkin-Lorentz spaces, one advantage of Besov-Lorentz spaces is that the interpolation inequalities in Theorems \ref{thm-itp-inhomb} and \ref{thm-itp-homb} do hold  for the limiting case when one of $p$, $p_1$, and $p_2$ is equal to $1$ or $\infty$.
Some conclusions of Theorem \ref{thm-itp-homf} can be extended to  the limiting case when $p=1$, $p_1 =1$, or $ p_2=1$, by using the elementary embedding results in Lemma \ref{thm-ebd-FB and BF}.

\begin{thm}\label{cor0}
Let   $s , s_1 , s_2 \in \mathbb{R}$, $1 \le p,p_1,p_2 < \infty$, $1 \le   q , q_1 , q_2   \le \infty$, and $0<\theta<1$.
Then the interpolation inequality
\[
\|f\|_{\hsf{s}{p}{q}{1}}\lesssim \|f\|_{\hsf{s_1}{p_1}{q_1}{\infty}}^{1-\theta}\|f\|_{\hsf{s_2}{p_2}{q_2}{\infty}}^\theta
\]
holds for all $f \in\hsf{s_1}{p_1}{q_1}{\infty} \cap \hsf{s_2}{p_2}{q_2}{\infty}$, if one of the following conditions is satisfied:
\begin{enumerate}[label = \textup{(\roman*)}]
\item   $s_* =s$, $s_1\neq s_2$, $p=p_1=p_2 =1$, $q=q_1 =q_2 =1$.
\item  $  s_* -s =n/p_* -n/p >0$, $s_2 -s_1 \neq n/p_2 -n/p_1$.
\end{enumerate}
\end{thm}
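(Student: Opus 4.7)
The plan is to sandwich the claimed Triebel--Lizorkin--Lorentz inequality between two elementary embeddings from Lemma \ref{thm-ebd-FB and BF}, reducing everything to the Besov--Lorentz interpolation inequality furnished by Theorem \ref{thm-itp-homb}. Concretely, I would chain
\[
\|f\|_{\hsf{s}{p}{q}{1}} \les \|f\|_{\hsb{s}{p}{q}{1}} \les \|f\|_{\hsb{s_1}{p_1}{q_1}{\infty}}^{1-\theta} \|f\|_{\hsb{s_2}{p_2}{q_2}{\infty}}^{\theta} \les \|f\|_{\hsf{s_1}{p_1}{q_1}{\infty}}^{1-\theta} \|f\|_{\hsf{s_2}{p_2}{q_2}{\infty}}^{\theta},
\]
where the first inequality invokes Lemma \ref{thm-ebd-FB and BF}(ii), the middle step applies Theorem \ref{thm-itp-homb}, and the final step uses Lemma \ref{thm-ebd-FB and BF}(i) once for each factor.

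What remains is to check that the hypotheses of each step are met under conditions (i) and (ii) of the present theorem. The last embedding only needs $1 \le p_i < \infty$, which is assumed. The middle step requires one of the sufficient conditions of Theorem \ref{thm-itp-homb}: assumption (i) here is exactly clause (iv) of that theorem (with $s_*=s$, $s_1\neq s_2$, $p_1=p_2=1$, and $\max(q_1,q_2)=1=q$), while assumption (ii) here matches clause (v) (with $s_*>s$ and $s_2-s_1\neq n/p_2-n/p_1$). The first embedding is the only delicate one: Lemma \ref{thm-ebd-FB and BF}(ii) applies only when $1<p<\infty$ or $p=q=1$. In assumption (i) we have $p=q=1$ directly. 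In assumption (ii), the identity $1/p_* = (1-\theta)/p_1 + \theta/p_2 \le 1$ forces $p_* \ge 1$, and then $n/p_* - n/p > 0$ gives $p > p_* \ge 1$, so that $1 < p < \infty$ since $p<\infty$ is assumed.

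The only point needing any real thought is the automatic strict inequality $p>1$ in assumption (ii), which follows at once from convexity of $1/p_*$; no other genuine obstacle is expected, and the argument is otherwise a mechanical concatenation of already established results.
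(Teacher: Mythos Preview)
Your argument is correct and is precisely the route the paper takes: the paper's proof is the one-line remark that the theorem is an immediate consequence of Theorem \ref{thm-itp-homb} and Lemma \ref{thm-ebd-FB and BF}, and your chain of embeddings together with the verification that $p>1$ under hypothesis (ii) spells out exactly this.
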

\begin{proof} The theorem is an immediate consequence of Theorem \ref{thm-itp-homb} and Lemma \ref{thm-ebd-FB and BF}.
\end{proof}

\section{Application to  Gagliardo-Nirenberg inequalities}

Applying interpolation inequalities in the previous sections, we shall extend   Gagliardo-Nirenberg inequalities to  Lorentz spaces, which are  of some interest to theory of partial differential equations (see \cite{McC2} e.g.)

\subsection{Sobolev-Lorentz spaces}

For $s\in\mathbb{R}$, we define $J^s=(I-\Delta)^{s/2}:\scc'\rightarrow\scc'$  via the Fourier transform  by
$$
\widehat{J^s f} =  \left( 1+|\xi|^2 \right)^{s/2} \hat f \quad\mbox{for}\,\, f \in \scc'.
$$
Then for $s\in\mathbb{R}$,
$1 < p < \infty$, and $1 \le q \le \infty$, the Sobolev-Lorentz space $\ihs{s}{p}{q}$ is defined by
$$
\ihs{s}{p}{q}=\{ f \in \tpd : J^s f \in L^{p,q}\}.
$$
The space $\ihs{s}{p}{q}$ equipped with the quasi-norm
$\|f \|_{\ihs{s}{p}{q}} = \| J^s f \|_{L^{p,q}}$ is a  quasi-Banach space. Obviously,  $J^\sigma$ is an isometric isomorphism from $\ihs{s}{p}{q}$ onto $\ihs{s-\sigma}{p}{q}$ for $\sigma\in\mathbb{R}$. The homogeneous Sobolev-Lorentz space $\hs{s}{p}{q}$ is defined by
$$
\hs{s}{p}{q}=\{ f \in \tpd_0 : \Lambda^s f \in L^{p,q}\} ,
$$
where $\Lambda^s=(-\Delta)^{s/2}:\tpd_0\rightarrow\tpd_0$ is defined by
$$
\langle\Lambda^sf,\psi\rangle=  \left\lg f , \left( |\xi|^{s}  \psi\ift \right)\wdh \right\rg \quad\text{for every $f\in\scc_0' , \, \psi\in \scc_0$}.
$$
Note that $\Lambda^\sigma$ is an isometric isomorphism from $\hs{s}{p}{q}$ onto $\hs{s-\sigma}{p}{q}$ for any $\sigma\in\mathbb{R}$.
For $1<p=q<\infty$, we write  $H_{p}^s = \ihs{s}{p}{p}$ and $\dot{H}_p^s =  \hs{s}{p}{p}$.

\begin{thm}\label{SFE2-inhom}
For $s\in\mathbb{R}$, $1<p<\infty$, and $1\le  q \le \infty$, we have
\[
\ihsf{s}{p}{q}{2}= \ihs{s}{p}{q}\quad \text{and} \quad \hsf{s}{p}{q}{2}=  \hs{s}{p}{q}.
\]
\end{thm}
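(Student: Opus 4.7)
The plan is to bootstrap the classical Sobolev--Littlewood--Paley identification $F^{s,2}_p = H^s_p$ (for $1<p<\infty$) to the Lorentz range $1\le q\le\infty$ by a single real-interpolation step, and to run the argument in parallel for the homogeneous scale.

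First I would recall the classical case: for any $s\in\mathbb{R}$ and $1<p<\infty$, the Mihlin--H\"ormander multiplier theorem applied to the symbols $\psi_j(\xi)(1+|\xi|^2)^{s/2}2^{-js}$ (which have uniform Mihlin bounds in $j$) together with the Littlewood--Paley square function theorem on $L^p$ yields $\ihsf{s}{p}{p}{2}=\ihs{s}{p}{p}$ with equivalent norms. The homogeneous identity $\hsf{s}{p}{p}{2}=\hs{s}{p}{p}$ follows from the same argument using the symbols $\vp_j(\xi)|\xi|^s 2^{-js}$. The $s=0$ instance is the content of Theorem~\ref{prop-relation}(i) specialized to $q=p$.

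Next I would fix $1<p<\infty$ and $1\le q\le\infty$, pick $p_1,p_2\in(1,\infty)$ with $p_1<p<p_2$, and choose $\theta\in(0,1)$ satisfying $1/p=(1-\theta)/p_1+\theta/p_2$. Invoking Theorem~\ref{lem-itp-inhomf} with $r=2$ (permitted since $1<2\le\infty$) yields
\[
(F^{s,2}_{p_1},F^{s,2}_{p_2})_{\theta,q}=\ihsf{s}{p}{q}{2} \quad\text{and}\quad (\dot F^{s,2}_{p_1},\dot F^{s,2}_{p_2})_{\theta,q}=\hsf{s}{p}{q}{2}.
\]
On the Sobolev side, $J^s$ is by definition an isometric isomorphism from $\ihs{s}{p'}{q'}$ onto $L^{p',q'}$ for every admissible $p',q'$, so the retract principle of real interpolation combined with the standard $(L^{p_1},L^{p_2})_{\theta,q}=L^{p,q}$ gives $(\ihs{s}{p_1}{p_1},\ihs{s}{p_2}{p_2})_{\theta,q}=\ihs{s}{p}{q}$. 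The same reasoning with $\Lambda^s$ in place of $J^s$ delivers the homogeneous counterpart.

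Combining these steps, the classical case identifies the interpolation couples $\{F^{s,2}_{p_i}\}$ and $\{\ihs{s}{p_i}{p_i}\}$ termwise (and likewise on the homogeneous side), so the interpolation identities above force $\ihsf{s}{p}{q}{2}=\ihs{s}{p}{q}$ and $\hsf{s}{p}{q}{2}=\hs{s}{p}{q}$ with equivalent quasi-norms. The main input demanding care will be the classical base case itself --- the Mihlin argument is standard but technical --- together with the homogeneous setup, where one must check that $\Lambda^s$ descends correctly to $\scc_0'$ (so that $\Lambda^\sigma$ really is an isomorphism on the whole scale); both issues are already handled by the framework introduced in Section~2, so I do not anticipate any essential obstruction beyond bookkeeping.
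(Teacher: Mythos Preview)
Your proposal is correct and follows essentially the same approach as the paper: both arguments bootstrap the classical identification $F^{s,2}_p=H^s_p$ (equivalently, $\dot F^{s,2}_p=\dot H^s_p$) to the Lorentz range via Theorem~\ref{lem-itp-inhomf} with $r=2$, together with the standard real-interpolation identity $(L^{p_1},L^{p_2})_{\theta,q}=L^{p,q}$. The paper's organization differs only cosmetically---it first lifts the isomorphism $J^s:F^{s,r}_p\to F^{0,r}_p$ to the Lorentz scale and then invokes Theorem~\ref{prop-relation}(i), whereas you interpolate the $s$-level identification directly---but the underlying argument is the same.
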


\begin{proof}
It was shown in    \cite[Theorem 2.3.8 and  Section 5.2.3]{Tri1}   that if $s \in\mathbb{R}$,  $1 \le p<\infty$, and $1\le r\le \infty$, then $J^s$ maps $F^{s,r}_p$ isomorphically onto $F^{0,r}_p$ and $\Lambda^s$ maps $\dot F^{s,r}_p$ isomorphically onto $\dot F^{0,r}_p$.
It follows from  the interpolation results in Theorem  \ref{lem-itp-inhomf}  that $J^s$ maps $\ihsf{s}{p}{q}{r}$ isomorphically onto $\ihsf{0}{p}{q}{r}$ and
$\Lambda^s$ maps $\hsf{s}{p}{q}{r}$ isomorphically onto $\hsf{0}{p}{q}{r}$ for $1<r\le\infty$. Hence  by Part (i) of Theorem  \ref{prop-relation}, we complete the proof.
\end{proof}

By virtue of Theorem \ref{SFE2-inhom}, the following embedding results for Sobolev-Lorentz spaces are immediate consequences of Theorems  \ref{ebd-inhom-TLL}  and  \ref{ebd-hom-TLL}.

\begin{thm}\label{embedding-inhom}
Let $s_1,s_2\in\mathbb{R}$, $1< p_1,p_2<\infty$, and $1 \le  q_1,q_2\le \infty$. Then the embedding
$$
\ihs{s_1}{p_1}{q_1} \hookrightarrow \ihs{s_2}{p_2}{q_2}
$$
holds if and only if one of the following conditions is satisfied:
\begin{enumerate}[label = \textup{(\roman*)}]
\item  $s_1\ge s_2$, $p_1=p_2$, $q_1\le q_2$.
\item $s_1 -s_2 = n/p_1- n/p_2 >0 $, $q_1\le q_2$.
\item  $s_1 -s_2 > n/p_1- n/p_2 >0 $.
\end{enumerate}
\end{thm}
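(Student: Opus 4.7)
The strategy is to reduce the statement directly to the already-known characterization for Triebel-Lizorkin-Lorentz spaces, namely Theorem \ref{ebd-inhom-TLL}, via the identification provided by Theorem \ref{SFE2-inhom}. Since $\ihs{s}{p}{q} = \ihsf{s}{p}{q}{2}$ as quasi-Banach spaces whenever $1<p<\infty$ and $1\le q\le\infty$, the embedding
\[
\ihs{s_1}{p_1}{q_1} \hookrightarrow \ihs{s_2}{p_2}{q_2}
\]
is equivalent (with equivalent quasi-norms, hence with the same continuity constants up to a universal factor) to
\[
\ihsf{s_1}{p_1}{q_1}{2} \hookrightarrow \ihsf{s_2}{p_2}{q_2}{2}.
\]
Thus the plan is to invoke Theorem \ref{ebd-inhom-TLL} with $r_1 = r_2 = 2$, and then read off what its four conditions (i)--(iv) simplify to in this special case.

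With $r_1 = r_2 = 2$ the condition $r_1 \le r_2$ is automatic, so conditions (i) and (ii) of Theorem \ref{ebd-inhom-TLL} collapse to the single condition $s_1 \ge s_2$, $p_1 = p_2$, $q_1 \le q_2$, which is precisely condition (i) of the present theorem. Conditions (iii) and (iv) of Theorem \ref{ebd-inhom-TLL} are independent of $r_1,r_2$ and transcribe verbatim to conditions (ii) and (iii) here. This gives both sufficiency and necessity simultaneously.

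There is no real obstacle: the only nontrivial input is Theorem \ref{SFE2-inhom}, which is already established earlier in the section using the Triebel-Lizorkin lifting property of $J^s$ combined with the real interpolation identification of Theorem \ref{lem-itp-inhomf} and the coincidence $\ihsf{0}{p}{q}{2} = L^{p,q}$ from Theorem \ref{prop-relation}(i). Once that identification is in hand, the proof of Theorem \ref{embedding-inhom} is essentially a one-line bookkeeping argument. The whole proof can therefore be compressed to: \emph{By Theorem \ref{SFE2-inhom}, the stated embedding is equivalent to $\ihsf{s_1}{p_1}{q_1}{2} \hookrightarrow \ihsf{s_2}{p_2}{q_2}{2}$; now apply Theorem \ref{ebd-inhom-TLL} with $r_1 = r_2 = 2$ and merge cases (i)--(ii).}
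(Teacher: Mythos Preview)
Your proposal is correct and matches the paper's own argument exactly: the paper states just before Theorem \ref{embedding-inhom} that it is an immediate consequence of Theorems \ref{ebd-inhom-TLL} and \ref{SFE2-inhom}, which is precisely your reduction via $\ihs{s}{p}{q}=\ihsf{s}{p}{q}{2}$ followed by specializing $r_1=r_2=2$ and merging the first two cases.
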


\begin{thm}\label{ebd-hom}
Let $s_1,s_2\in\mathbb{R}$, $1< p_1,p_2<\infty$, and $1\le  q_1,q_2\le \infty$. Then the embedding
$$
\hs{s_1}{p_1}{q_1} \hookrightarrow \hs{s_2}{p_2}{q_2}
$$
holds if and only if
$$
s_1-s_2=\frac{n}{p_1}-\frac{n}{p_2}\ge0\quad\text{and}\quad q_1\le q_2.
$$
\end{thm}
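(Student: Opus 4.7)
The plan is to reduce the statement directly to the Triebel-Lizorkin-Lorentz embedding result, Theorem~\ref{ebd-hom-TLL}, via the identification $\hs{s}{p}{q}=\hsf{s}{p}{q}{2}$ provided by Theorem~\ref{SFE2-inhom}. Since the theorem is formulated only for $1<p<\infty$, we are squarely inside the range where Theorem~\ref{SFE2-inhom} applies, so the embedding $\hs{s_1}{p_1}{q_1}\hookrightarrow\hs{s_2}{p_2}{q_2}$ is equivalent to $\hsf{s_1}{p_1}{q_1}{2}\hookrightarrow\hsf{s_2}{p_2}{q_2}{2}$ (with bounded Banach-space norm equivalence in both directions, which is all we need).

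Applying Theorem~\ref{ebd-hom-TLL} with $r_1=r_2=2$ (so the condition $r_1\le r_2$ is automatic), I would then observe that the two criteria (i) and (ii) listed there collapse into one: case (i) contributes exactly $s_1-s_2=n/p_1-n/p_2=0$ with $q_1\le q_2$, while case (ii) contributes $s_1-s_2=n/p_1-n/p_2>0$ with $q_1\le q_2$. Their union is precisely the single scaling condition
\[
s_1-s_2=\frac{n}{p_1}-\frac{n}{p_2}\ge 0\quad\text{and}\quad q_1\le q_2,
\]
which is the statement to be proved. Both sufficiency and necessity follow simultaneously from this rewriting, since Theorem~\ref{ebd-hom-TLL} is itself an ``if and only if''.

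There is essentially no genuine obstacle here beyond bookkeeping: the nontrivial work has already been performed in Theorem~\ref{SFE2-inhom} (which identifies the Sobolev-Lorentz space with a Triebel-Lizorkin-Lorentz space of index $r=2$) and in Theorem~\ref{ebd-hom-TLL} (the scaling/embedding analysis on the homogeneous Littlewood-Paley side). The only subtle point to watch is that the equality case $s_1=s_2$, $p_1=p_2$ is \emph{included} in the final condition, so one should not accidentally exclude it; writing the condition as $s_1-s_2=n/p_1-n/p_2\ge 0$ makes the union of (i) and (ii) transparent and avoids this pitfall. The proof in the paper will therefore be very short—a single sentence invoking Theorems~\ref{SFE2-inhom} and~\ref{ebd-hom-TLL}—and I would present it that way.
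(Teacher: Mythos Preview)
Your proposal is correct and matches the paper's approach exactly: the paper states that Theorem~\ref{ebd-hom} is an immediate consequence of Theorems~\ref{SFE2-inhom} and~\ref{ebd-hom-TLL}, which is precisely the reduction you outline. Your observation that, with $r_1=r_2=2$, conditions (i) and (ii) of Theorem~\ref{ebd-hom-TLL} merge into the single scaling condition $s_1-s_2=n/p_1-n/p_2\ge 0$ together with $q_1\le q_2$ is the only bookkeeping needed.
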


For $k\in\mathbb{N}$, $1 \le  p <  \infty$, and $1\le q\le \infty$, we consider
\begin{align*}
W^{k,p,q} = \{f\in \scc' : D^\alpha f \in L^{p,q} \,\,\text{for}\,\, |\alpha|\le k\},
\end{align*}
which is a quasi-Banach space equipped with the quasi-norm
$$
\|f\|_{W^{k,p,q}} = \sum_{|\alpha|\le k} \|D^\alpha f\|_{L^{p,q}}.
$$
The corresponding homogeneous space $\dot W^{k,p,q}$ is defined by
$$\dot W^{k,p,q} = \{ f \in \scc_0' : D^\alpha f \in L^{p,q} \,\,\text{for}\,\, |\alpha|= k\}.$$
More precisely, $\dot W^{k,p,q}$ consists of all $f\in\scc_0'$ having an extension $F\in\scc'$ such that $D^\alpha F\in L^{p,q}$ for all multi-indices $\alpha$ with $|\alpha|=k$. In this case, we define $D^\alpha f= D^\alpha F$ for $|\alpha|=k$ because such an extension $F$ is unique up to polynomials of degree at most $k-1$. Then $\dot W^{k,p,q}$ is a quasi-Banach space equipped with the quasi-norm
$$
\|f\|_{\dot W^{k,p,q}} = \sum_{|\alpha|= k} \|D^\alpha f\|_{L^{p,q}}.
$$
We define $W^{k,p}=W^{k,p,p}$ and $\dot W^{k,p} = \dot W^{k,p,p}$.
Then it is well-known that
$$
W^{k,p} = F^{k,2}_p =H^{k}_{p} \quad\mbox{and}\quad \dot{W}^{k,p}= \dot{F}^{k,2}_p = \dot{H}^{k}_{p}
$$
 for $1<p<\infty$ (see \cite[Subsections 2.5.6 and 5.2.3]{Tri1} e.g.).

\begin{thm}\label{Interpolation result-W}
Let $1<p, p_1 , p_2 <\infty$ and $0<\theta<1$ satisfy
$$
p_1 \neq p_2 \quad \mbox{and}\quad \frac{1}{p}= \frac{1-\theta}{p_1}+ \frac{\theta}{p_2}.
$$
Then for $k \in {\mathbb N}$ and $1 \le q \le \infty$,
$$
(W^{k,p_1},W^{k,p_2})_{\theta,q}=W^{k,p,q} \quad \text{and} \quad (\dot W^{k,p_1},\dot W^{k,p_2})_{\theta,q}=\dot W^{k,p,q}.
$$
\end{thm}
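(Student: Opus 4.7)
The plan is to bootstrap both interpolation identities from the already-established Triebel--Lizorkin--Lorentz interpolation (Theorem \ref{lem-itp-inhomf}), using the well-known identification of classical Sobolev spaces with Bessel-potential and Triebel--Lizorkin spaces. For the inhomogeneous case, I would first invoke $W^{k,p_i}=F^{k,2}_{p_i}=H^k_{p_i}$ for $i=1,2$ (valid since $1<p_i<\infty$, as recalled just before the theorem). This equates the couple $(W^{k,p_1},W^{k,p_2})$ with $(F^{k,2}_{p_1},F^{k,2}_{p_2})$, and Theorem \ref{lem-itp-inhomf} applied with $s=k$ and $r=2\in(1,\infty]$ yields
\[
(W^{k,p_1},W^{k,p_2})_{\theta,q} = (F^{k,2}_{p_1},F^{k,2}_{p_2})_{\theta,q} = F^{k,2}_{p,q}.
\]
By Theorem \ref{SFE2-inhom}, $F^{k,2}_{p,q}=H^k_{p,q}$, so matters reduce to the equivalence $H^k_{p,q}=W^{k,p,q}$ (with equivalent quasi-norms) for $1<p<\infty$ and $1\le q\le\infty$.

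This final equivalence is handled by the Mikhlin--H\"ormander multiplier theorem upgraded to Lorentz spaces by real interpolation. For each $|\alpha|\le k$ the Fourier multiplier $D^\alpha J^{-k}$ with symbol $(i\xi)^\alpha(1+|\xi|^2)^{-k/2}$ satisfies the Mikhlin condition, hence is bounded on $L^p$ for every $1<p<\infty$ and, by real interpolation through Lemma \ref{interpolation-Lp} applied to two exponents straddling the given $p$, also bounded on $L^{p,q}$; this gives $\|D^\alpha f\|_{L^{p,q}}\lesssim \|J^k f\|_{L^{p,q}}$ and hence $H^k_{p,q}\hookrightarrow W^{k,p,q}$. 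Conversely, one writes $(1+|\xi|^2)^{k/2}$ as a finite sum $\sum_{|\alpha|\le k}m_\alpha(\xi)(i\xi)^\alpha$ with Mikhlin multipliers $m_\alpha$---trivial for $k$ even by binomial expansion, and for $k$ odd by iterating identities such as $(1+|\xi|^2)^{1/2}=(1+|\xi|^2)^{-1/2}+\sum_j(i\xi_j)\bigl(-i\xi_j(1+|\xi|^2)^{-1/2}\bigr)$---which gives $W^{k,p,q}\hookrightarrow H^k_{p,q}$.

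The homogeneous identity follows the identical scheme: $\dot W^{k,p_i}=\dot F^{k,2}_{p_i}=\dot H^k_{p_i}$ together with the homogeneous half of Theorem \ref{lem-itp-inhomf} and Theorem \ref{SFE2-inhom} give $(\dot W^{k,p_1},\dot W^{k,p_2})_{\theta,q}=\dot F^{k,2}_{p,q}=\dot H^k_{p,q}$, and the remaining identification $\dot H^k_{p,q}=\dot W^{k,p,q}$ is proved analogously with Riesz potentials and transforms replacing Bessel potentials---the symbols $(i\xi)^\alpha|\xi|^{-k}$ for $|\alpha|=k$ are homogeneous Mikhlin multipliers, and $|\xi|^k$ admits a dual decomposition into $(i\xi)^\alpha$ times homogeneous Mikhlin symbols (iterating $|\xi|=|\xi|^{-1}\sum_j\xi_j^2$ when $k$ is odd). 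I anticipate the main obstacle to be precisely this last Sobolev/Bessel--Lorentz equivalence $W^{k,p,q}=H^k_{p,q}$ off the diagonal $p=q$, but once Mikhlin's theorem is upgraded to $L^{p,q}$ by real interpolation, the classical proof adapts directly.
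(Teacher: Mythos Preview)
Your argument is correct, but it takes a genuinely different route from the paper's. The paper proves the homogeneous identity by a direct $K$-functional comparison: using the reproducing formula
\[
f=\frac{1}{(2\pi i)^k}\sum_{|\alpha|=k}\binom{k}{\alpha}\Lambda^{-k}\Bigl[\bigl(\tfrac{\xi^\alpha}{|\xi|^{k}}\widehat{D^\alpha f}\bigr)^\vee\Bigr]
\]
together with the $L^r$-boundedness of the multipliers $\xi^\alpha/|\xi|^k$, it shows $K(t,f;\dot W^{k,p_1},\dot W^{k,p_2})\sim\sum_{|\alpha|=k}K(t,D^\alpha f;L^{p_1},L^{p_2})$ and then invokes Lemma~\ref{interpolation-Lp}. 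For the inhomogeneous case it simply cites Adams--Fournier. In the paper's logical flow, Theorem~\ref{H equal W} ($H^k_{p,q}=W^{k,p,q}$) is then deduced \emph{from} Theorem~\ref{Interpolation result-W} together with Theorems~\ref{lem-itp-inhomf} and~\ref{SFE2-inhom}.

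You invert this dependency: you establish $H^k_{p,q}=W^{k,p,q}$ (and its homogeneous analogue) first, by lifting the Mikhlin multiplier theorem to $L^{p,q}$ via real interpolation, and then read off Theorem~\ref{Interpolation result-W} from the chain $(W^{k,p_1},W^{k,p_2})_{\theta,q}=(F^{k,2}_{p_1},F^{k,2}_{p_2})_{\theta,q}=F^{k,2}_{p,q}=H^k_{p,q}=W^{k,p,q}$. This is perfectly sound---none of the ingredients (Theorems~\ref{lem-itp-inhomf} and~\ref{SFE2-inhom}) depend on Theorem~\ref{Interpolation result-W}---and it has the virtue of making the interpolation identity an immediate consequence of already-assembled machinery. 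The paper's $K$-functional argument, on the other hand, is more self-contained (it does not pass through the Triebel--Lizorkin identification) and yields the sharper two-sided $K$-functional equivalence as a by-product. Both approaches ultimately rest on the same harmonic-analytic fact, namely $L^p$- (hence $L^{p,q}$-) boundedness of the homogeneous multipliers $\xi^\alpha/|\xi|^k$ and their inhomogeneous analogues.
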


\begin{proof} Since the first interpolation result   is proved in \cite[Theorem 6]{Adams}, it remains to prove the second one.

Let $K(t,f;X_1,X_2)$ denote  the
 $K$-functional for a given compatible couple $(X_1,X_2)$ of two quasi-Banach spaces:
$$
K(t,f;X_1,X_2) = \inf\{ \|f_1\|_{X_1} + t\|f_2\|_{X_2} : f=f_1+f_2,\, f_1\in X_1,\, f_2\in X_2 \}.
$$
Then it is easy to show that if $f \in (\dot W^{k,p_1},\dot W^{k,p_2})_{\theta,q}$, then
$$
\sum_{|\alpha| =k} K(t, D^\alpha f; L^{p_1}, L^{p_2})  \les K(t,f;\dot W^{k,p_1}, W^{k,p_2}) .
$$
 To prove the reverse inequality, suppose that $f\in  \dot W^{k,p,q}$.
Note  that
\[
f   = \frac{1}{(2\pi i)^k} \sum_{|\alpha|=k}\binom{k}{\alpha}\Lambda^{-k}\left[ \left( \frac{\xi^\alpha}{|\xi|^{k}}\widehat{D^\alpha f}\right)   \ift\right] \quad \text{in $\scc_0'$}.
\]
For each multi-index $\alpha$ with $|\alpha|=k$, we  choose $g_1^\alpha \in L^{p_1}$ and $g_2^\alpha \in L^{p_2}$ such that $D^\alpha f= g_1^\alpha + g_2^\alpha$.
Recall  now that if  $|\alpha|=k$, then $\xi^\alpha/|\xi|^k$ is a $L^r$-Fourier multiplier for $1<r<\infty$. Therefore,
\begin{align*}
K(t,f; \dot W^{k,p_1},W^{k,p_2}) &\les \sum_{|\alpha|=k}\left( \left\| \Lambda^{-k}\left( \frac{\xi^\alpha}{|\xi|^{k}}\widehat{g_1^\alpha}\right)\ift\right\|_{\dot W^{k,p_1}} + t   \left\| \Lambda^{-k}\left( \frac{\xi^\alpha}{|\xi|^{k}}\widehat{g_2^\alpha}\right)\ift\right\|_{\dot W^{k,p_2}}\right) \\
	& \sim \sum_{|\alpha|=k} \left(\left\|  \Lambda^{-k} \left( \frac{\xi^\alpha}{|\xi|^{k}}\widehat{g_1^\alpha}\right)\ift\right\|_{\dot H^{k}_{p_1}} + t  \left\|  \Lambda^{-k} \left( \frac{\xi^\alpha}{|\xi|^{k}}\widehat{g_2^\alpha}\right)\ift\right\|_{\dot H^{k}_{p_2}}\right) \\
	& = \sum_{|\alpha|=k} \left( \left\| \left( \frac{\xi^\alpha}{|\xi|^{k}}\widehat{g_1^\alpha}\right)\ift\right\|_{L^{p_1}} + t   \left\| \left( \frac{\xi^\alpha}{|\xi|^{k}}\widehat{g_2^\alpha}\right)\ift\right\|_{L^{p_2}}\right) \\
	& \les \sum_{|\alpha|=k} \left( \|g_1^\alpha \|_{L^{p_1}} + t \|g_2^\alpha\|_{L^{p_2}}\right).
\end{align*}
By the arbitrariness of $g_1^\alpha$ and $g_2^\alpha$ with $D^\alpha f= g_1^\alpha + g_2^\alpha$, we conclude that
$$
K(t,f; \dot W^{k,p_1},W^{k,p_2}) \les \sum_{|\alpha| =k} K(t, D^\alpha f; L^{p_1}, L^{p_2}) .
$$
This completes the proof.
\end{proof}

By Theorems \ref{lem-itp-inhomf}, \ref{SFE2-inhom}, and \ref{Interpolation result-W}, we immediately  obtain

\begin{thm}\label{H equal W}
For any  $k\in\mathbb{N}$, $1<p<\infty$, and $1\le q\le\infty$,
$$
\ihs{k}{p}{q}= W^{k,p,q} \quad \text{and} \quad \hs{k}{p}{q} =\dot W^{k,p,q}.
$$
\end{thm}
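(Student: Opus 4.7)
The plan is to deduce both identities by interpolating the classical equality $W^{k,p}=H_p^k=F_p^{k,2}$ (and its homogeneous analogue $\dot W^{k,p}=\dot H_p^k=\dot F_p^{k,2}$), which holds for $1<p<\infty$ and is recalled right before Theorem \ref{Interpolation result-W}. All three tools needed — the Lorentz description of $H_{p,q}^s$ (Theorem \ref{SFE2-inhom}), the real interpolation formula for Triebel–Lizorkin–Lorentz spaces (Theorem \ref{lem-itp-inhomf}), and the real interpolation formula for $W^{k,p}$ (Theorem \ref{Interpolation result-W}) — are already available.

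Concretely, given $k\in\mathbb{N}$, $1<p<\infty$, and $1\le q\le\infty$, I first pick auxiliary exponents $1<p_1<p<p_2<\infty$ and $\theta\in(0,1)$ with
\[
\frac{1}{p}=\frac{1-\theta}{p_1}+\frac{\theta}{p_2}.
\]
By the classical identification $W^{k,p_i}=H_{p_i}^k=F_{p_i}^{k,2}$ for $i=1,2$, Theorem \ref{lem-itp-inhomf} (applicable since $r=2>1$) gives
\[
\bigl(W^{k,p_1},W^{k,p_2}\bigr)_{\theta,q}=\bigl(F^{k,2}_{p_1},F^{k,2}_{p_2}\bigr)_{\theta,q}=\ihsf{k}{p}{q}{2},
\]
and Theorem \ref{SFE2-inhom} then converts the right-hand side into $\ihs{k}{p}{q}$. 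On the other hand, Theorem \ref{Interpolation result-W} identifies the left-hand side with $W^{k,p,q}$. Comparing the two yields $\ihs{k}{p}{q}=W^{k,p,q}$.

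The homogeneous case is handled by exactly the same chain of equalities, using $\dot W^{k,p_i}=\dot H_{p_i}^k=\dot F_{p_i}^{k,2}$, the second half of Theorem \ref{lem-itp-inhomf}, the homogeneous part of Theorem \ref{SFE2-inhom}, and the homogeneous part of Theorem \ref{Interpolation result-W} to conclude
\[
\hs{k}{p}{q}=\hsf{k}{p}{q}{2}=\bigl(\dot F^{k,2}_{p_1},\dot F^{k,2}_{p_2}\bigr)_{\theta,q}=\bigl(\dot W^{k,p_1},\dot W^{k,p_2}\bigr)_{\theta,q}=\dot W^{k,p,q}.
\]
There is no real obstacle: the only thing to verify is that the hypotheses of each invoked theorem are met, in particular that $1<p_1,p_2<\infty$ (so the classical $W=H=F$ identity applies) and that $r=2$ lies in the range $1<r\le\infty$ for which Theorem \ref{lem-itp-inhomf} gives an equality rather than a mere embedding. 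The argument is essentially a three-line chain of identifications, so the writeup should be short.
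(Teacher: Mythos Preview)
Your proposal is correct and is exactly the argument the paper has in mind: the paper's proof is the single line ``By Theorems \ref{lem-itp-inhomf}, \ref{SFE2-inhom}, and \ref{Interpolation result-W}, we immediately obtain'', and your writeup simply spells out this chain of identifications in detail.
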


The embedding results below will be used   to prove  Gagliardo-Nirenberg inequalities  involving  the critical Lorentz space $L^{1,\infty}$.

\begin{thm}\label{thm-ebd-B}
\begin{enumerate}[label = \textup{(\roman*)}]
\item
If $1\le p \le \infty$, then $ \dot B^{n/p,1}_{p,\infty} \hookrightarrow L^\infty.$
\item If $1\le p < \infty$, then $ L^1 \hookrightarrow \dot B^{-n(1-1/p),\infty}_{p,1}$.
\item If $1 \le p<\infty$,  then $\dot B^{n/p,\infty}_{p,\infty} \hookrightarrow BMO$;
more precisely, for each $f\in \dot B^{n/p,\infty}_{p,\infty}$, there exists  $g\in BMO$, unique up to additive constants, such that
\begin{equation}\label{BMO-equation}
\lg f, \vp \rg = \int g\vp \quad\text{for all $\vp\in \scc_0$}
\quad\mbox{and}\quad
\|g\|_{BMO}\les \|f\|_{\dot B^{n/p,\infty}_{p,\infty}}.
\end{equation}
\end{enumerate}
\end{thm}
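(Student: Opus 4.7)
The plan is to prove each part by a direct calculation based on the Littlewood--Paley decomposition together with Bernstein-type inequalities in Lorentz spaces.

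For \textbf{Part (i)}, I would first establish the Lorentz-valued Bernstein inequality
\begin{equation*}
\|g\|_{L^\infty}\les R^{n/p}\|g\|_{L^{p,\infty}} \quad (1\le p\le\infty)
\end{equation*}
for every $g\in\scc'$ with $\hat g$ supported in $\{|\xi|\le R\}$. This follows by fixing $\eta\in\scc$ with $\hat\eta=1$ on $\{|\xi|\le 1\}$, writing $g=g\ast \eta_R\ift$ with $\eta_R(\xi)=\eta(\xi/R)$, and applying the Lorentz H\"older inequality (Lemma \ref{Holder-Lorentz}) in the pair $(L^{p,\infty},L^{p',1})$ together with the scaling identity $\|\eta_R\ift\|_{L^{p',1}}\sim R^{n/p}$. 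Taking $g=\lpo{\vp}{f}$ with $R\sim 2^j$, summing over $j\in\mathbb{Z}$ gives $\sum_j\|\lpo{\vp}{f}\|_{L^\infty}\les \|f\|_{\hsb{n/p}{p}{\infty}{1}}$, and the series then defines an $L^\infty$-representative of $f\in\scc_0'$.

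For \textbf{Part (ii)}, the key tool is Young's inequality in Lorentz spaces,
\begin{equation*}
\|u\ast v\|_{L^{p,1}}\les \|u\|_{L^1}\|v\|_{L^{p,1}} \quad (1\le p\le \infty),
\end{equation*}
which follows from O'Neil's convolution theorem \cite{oneil}. Applying it with $u=f$ and $v=\vp_j\ift$, and using the scaling $\|\vp_j\ift\|_{L^{p,1}}\sim 2^{jn(1-1/p)}$, I obtain $\|\lpo{\vp}{f}\|_{L^{p,1}}\les 2^{jn(1-1/p)}\|f\|_{L^1}$ uniformly in $j\in\mathbb{Z}$, which is exactly $\|f\|_{\hsb{-n(1-1/p)}{p}{1}{\infty}}\les \|f\|_{L^1}$.

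For \textbf{Part (iii)}, I would adapt the standard argument that $\dot B^{n/p,\infty}_p\hookrightarrow BMO$ to the Lorentz setting. Given a ball $B$ of radius $R$, choose $j_0\in\mathbb{Z}$ with $2^{-j_0}\sim R$ and formally split $g=\sum_j\lpo{\vp}{f}=L+H$ with $L=\sum_{j\le j_0}\lpo{\vp}{f}$ and $H=\sum_{j>j_0}\lpo{\vp}{f}$. For the high-frequency part $H$ and $p>1$, Lorentz duality gives
\begin{equation*}
\int_B|\lpo{\vp}{f}|\le \|\lpo{\vp}{f}\|_{L^{p,\infty}}\|\mathbf{1}_B\|_{L^{p',1}}\sim |B|^{1/p'}\cdot 2^{-jn/p}\|f\|_{\hsb{n/p}{p}{\infty}{\infty}},
\end{equation*}
and geometric summation in $2^{(j_0-j)n/p}$ yields $|B|^{-1}\int_B|H|\les \|f\|_{\hsb{n/p}{p}{\infty}{\infty}}$; the borderline case $p=1$ is handled by a layer-cake decomposition combined with the uniform bound $\|\lpo{\vp}{f}\|_{L^\infty}\les \|f\|_{\hsb{n/p}{p}{\infty}{\infty}}$ from Part (i), producing a $(j-j_0)$-factor that remains summable against $2^{-jn}$. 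For the low-frequency part $L$, the same $L^\infty$-bound together with the Bernstein-gradient estimate $\|\nabla\lpo{\vp}{f}\|_{L^\infty}\les 2^j\|\lpo{\vp}{f}\|_{L^\infty}$ and the mean value inequality give $|B|^{-1}\int_B|L-L_B|\les R\sum_{j\le j_0}2^j\|f\|_{\hsb{n/p}{p}{\infty}{\infty}}\les \|f\|_{\hsb{n/p}{p}{\infty}{\infty}}$. The main technical obstacle here is that the formal series $\sum_j\lpo{\vp}{f}$ generally does not converge in $L^\infty$; to produce the $BMO$ representative $g$ satisfying \eqref{BMO-equation} one must subtract suitable constants $c_j$ (depending on a fixed reference ball) from the low-frequency pieces so that $\sum_j[\lpo{\vp}{f}-c_j]$ converges in $L^1_{\text{loc}}$, with uniqueness of $g$ modulo constants then following from Lemma \ref{uniqueness}.
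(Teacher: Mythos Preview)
Your arguments for Parts (i) and (ii) are correct and close in spirit to the paper's, though the paper packages them differently: it simply chains the already-established embeddings $\dot B^{n/p,1}_{p,\infty}\hookrightarrow\dot B^{0,1}_\infty\hookrightarrow L^\infty$ (Theorem \ref{ebd-hom-B} plus Theorem \ref{prop-relation}) and $L^1\hookrightarrow\dot B^{0,\infty}_1\hookrightarrow\dot B^{-n(1-1/p),\infty}_{p,1}$, whereas you redo the underlying Bernstein and Young estimates by hand. Since those embeddings are themselves proved via the Lorentz Bernstein inequality (Lemma \ref{lem-hom-B}), the content is the same; the paper's route is shorter given its infrastructure, yours is more self-contained.

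Part (iii) is where your route genuinely diverges. The paper does \emph{not} estimate mean oscillations directly. Instead it first reduces, via Theorem \ref{ebd-hom-B}, to the Lebesgue case $\dot B^{n/p,\infty}_p$ with $1<p<\infty$, and then argues by duality: from $(\dot B^{-n/p,1}_{p'})^*=\dot B^{n/p,\infty}_p$ and the Jawerth embedding $\dot F^{0,2}_1\hookrightarrow\dot B^{-n/p,1}_{p'}$ one gets $|\langle f,\vp\rangle|\les\|f\|_{\dot B^{n/p,\infty}_p}\|\vp\|_{\mathcal H^1}$ for $\vp\in\scc_0$, and density of $\scc_0$ in $\mathcal H^1=\dot F^{0,2}_1$ together with Fefferman--Stein duality $(\mathcal H^1)^*=BMO$ produces $g$; the identity \eqref{BMO-equation} on all of $\scc_0$ is then obtained by a truncation-of-$g$ argument. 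Your approach---splitting $\sum_j\lpo{\vp}{f}$ at the scale of the ball, estimating high frequencies by Lorentz H\"older (with the log-corrected layer-cake bound at $p=1$), low frequencies by the Bernstein gradient estimate, and constructing $g$ by subtracting constants $c_j=\lpo{\vp}{f}(0)$ from the low-frequency pieces---is the classical direct proof and is correct. It is more elementary (no Hardy space theory) and handles all $1\le p<\infty$ uniformly, at the cost of having to manage the low-frequency convergence explicitly; the paper's duality argument is shorter and sidesteps that issue entirely, but imports heavier machinery. For uniqueness, note that Lemma \ref{uniqueness} gives uniqueness only up to polynomials; you should add the observation that a polynomial in $BMO$ is necessarily constant.
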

\begin{proof}
 (i)  It follows from Theorems  \ref{ebd-hom-B}  and \ref{prop-relation} that $ \dot B^{n/p,1}_{p,\infty} \hookrightarrow \dot B^{0,1}_{\infty} \hookrightarrow L^\infty.$

(ii) By Theorems   \ref{prop-relation} and \ref{ebd-hom-B},  $ L^1 \hookrightarrow \dot B^{0,\infty}_{1}\hookrightarrow \dot B^{-n(1-1/p),\infty}_{p,1}$.

(iii) By Theorem \ref{ebd-hom-B}, it suffices to show that $\dot B^{n/p,\infty}_{p} \hookrightarrow BMO$ for $1<p<\infty$. Suppose that $1<p<\infty$ and $f\in \dot B^{n/p,\infty}_p$. Recall  from the duality theorem in \cite[Chapter 3]{Peetre2}  and Theorem \ref{thm-ebd-FB-2} that
$$
 \left  (\dot B^{-n/p,1}_{p/(p-1)} \right)^* = \dot B^{n/p,\infty}_p \quad\text{and} \quad \scc_0 \subset \dot F^{0,2}_1   \hookrightarrow \dot B^{-n/p,1}_{p/(p-1)}.
 $$
Hence for all $\vp\in\scc_0$, we have
$$
| \lg f, \vp \rg | \les \|f\|_{\dot B^{n/p,\infty}_p} \|\vp\|_{\dot B^{-n/p,1}_{p/(p-1)}} \les \|f\|_{\dot B^{n/p,\infty}_p} \|\vp\|_{\dot F^{0,2}_{1}}.
$$
It is well-known that $\scc_0$ is dense in $\dot F^{0,2}_1$ and $\dot F^{0,2}_1$ is the Hardy space $\mathcal{H}^1$ (see, e.g., \cite[Theorem 2.2.9]{Gra1}). Thus there exists $F\in \left(\mathcal{H}^1\right)^*$ such that
$$
\lg f, \vp \rg = \lg F, \vp \rg \quad\text{for all $\vp\in \scc_0$} \quad \text{and}\quad \|F\|_{(\mathcal{H}^1)^*} \les \|f\|_{\dot B^{n/p,\infty}_p}.
$$
Moreover, since $BMO$ is the dual space of $\mathcal{H}^1$, there exists $g\in BMO$ such that
$$
\int_{\{|x|\le1\}} g =0, \quad \|g\|_{BMO} \les  \|F\|_{(\mathcal{H}^1)^*},
$$
and
\begin{equation}\label{BMO-eq}
\lg F, \vp \rg = \int g\vp
\end{equation}
for all $\vp\in \mathcal{H}^1_0$, where $\mathcal{H}^1_0$ is the space of all finite linear combinations of $L^2$-atoms for $\mathcal{H}^1$.

Now we show that \eqref{BMO-eq}   holds for all $\vp\in\scc_0$. Note that the integral in \eqref{BMO-eq} is well-defined for $\vp\in\scc_0$. Indeed, since
$
\int_{\{|x|\le1\}} g =0,
$
it follows from \cite[Proposition 3.1.5]{Gra1} that
\begin{equation}\label{BMO-eq2}
\int \frac{|g(x)|}{(1+|x|)^{n+1}}dx \les \|g\|_{BMO}.
\end{equation}
Suppose that  $\vp\in\scc_0.$ Then since $\vp \in \dot F^{0,2}_1 = \mathcal{H}^1$, there exists a sequence $\{\vp_N\}$ in $\mathcal{H}^1_0$ such that $\vp_N \rightarrow \vp$ in $\mathcal{H}^1$.  For $M>0$, define
\begin{equation*}
g_M =
\begin{cases}
 \quad M &\quad\text{if}\,\, g > M  \\
 \quad \,g &\quad\text{if}\,\, |g| \le M  \\
 -M &\quad\text{if}\,\, g<-M.
\end{cases}
\end{equation*}
Then since $g_M$ is bounded,
\begin{align*}
\left| \int g_M (\vp_N - \vp) \right| &\les \|g_M\|_{BMO} \|\vp_N - \vp\|_{\mathcal{H}^1} \les \|g\|_{BMO} \|\vp_N - \vp\|_{\mathcal{H}^1}.
\end{align*}
Note that $|g_M (\vp_N-\vp)| \le |g(\vp_N-\vp)| \in L^1$. Thus letting $M\rightarrow \infty$, we obtain
\begin{align*}
\left| \int g (\vp_N - \vp) \right|  \les \|g\|_{BMO} \|\vp_N - \vp\|_{\mathcal{H}^1}.
\end{align*}
Therefore,
$$
\lg F, \vp \rg = \lim_{N\rightarrow \infty} \lg F, \vp_N \rg = \lim_{N\rightarrow \infty} \int g \vp_N = \int g \vp,
$$
which proves   \eqref{BMO-equation}. Finally, the uniqueness of $g$ follows from \eqref{BMO-eq2} (see \cite[p. 243]{Tri1}).
\end{proof}

\begin{thm}\label{thm-ebd-Linf}
If $1<p<\infty$, then
$$
\dot{H}_{p,1}^{n/p} \hookrightarrow\hsf{n/p}{p}{1}{\infty} \hookrightarrow L^\infty, \quad \dot{H}_{p,\infty}^{n/p} \hookrightarrow\hsf{n/p}{p}{\infty}{\infty} \hookrightarrow BMO,
$$
and
$$
 L^1 \hookrightarrow \hsf{-n(1-1/p)}{p}{\infty}{1}  \hookrightarrow \dot{H}^{-n(1-1/p)}_{p,\infty}.
$$
\end{thm}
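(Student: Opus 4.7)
The plan is to split the six embeddings into two kinds. The three ``outer'' embeddings, i.e., the ones involving a Sobolev--Lorentz space, will reduce to simple monotonicity in the Triebel--Lizorkin summation index after the identification $\dot H^s_{p,q} = \dot F^{s,2}_{p,q}$ of Theorem \ref{SFE2-inhom}. Concretely, I first write
$$
\dot H^{n/p}_{p,1} = \dot F^{n/p,2}_{p,1}, \quad \dot H^{n/p}_{p,\infty} = \dot F^{n/p,2}_{p,\infty}, \quad \dot H^{-n(1-1/p)}_{p,\infty} = \dot F^{-n(1-1/p),2}_{p,\infty},
$$
and then invoke Theorem \ref{ebd-hom-TLL}(i) with $s_1 = s_2$, $p_1 = p_2$, $q_1 = q_2$ and $r_1 \le r_2$, taking $(r_1, r_2) = (2, \infty)$ in the first two chains and $(r_1, r_2) = (1, 2)$ in the third.

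For the three ``inner'' embeddings, the plan is to route everything through homogeneous Besov--Lorentz spaces and apply the endpoint results of Theorem \ref{thm-ebd-B}. The embedding $\dot F^{n/p,\infty}_{p,\infty} \hookrightarrow BMO$ is the most direct: Lemma \ref{thm-ebd-FB and BF}(i) gives $\dot F^{n/p,\infty}_{p,\infty} \hookrightarrow \dot B^{n/p,\infty}_{p,\infty}$, and Theorem \ref{thm-ebd-B}(iii) concludes. For $\dot F^{n/p,\infty}_{p,1} \hookrightarrow L^\infty$, I fix an auxiliary exponent $p_2 \in (p, \infty)$ and apply Theorem \ref{thm-ebd-FB} with $q = 1$, $p_1 = p$, $s_1 = n/p$, $s_2 = n/p_2$ to obtain $\dot F^{n/p,\infty}_{p,1} \hookrightarrow \dot B^{n/p_2, 1}_{p_2, 1}$; trivial monotonicity of the Lorentz secondary index yields $\dot B^{n/p_2, 1}_{p_2, 1} \hookrightarrow \dot B^{n/p_2, 1}_{p_2, \infty}$, and Theorem \ref{thm-ebd-B}(i) closes the chain.

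For $L^1 \hookrightarrow \dot F^{-n(1-1/p),1}_{p,\infty}$, I symmetrically fix $p_1 \in (1, p)$, apply Theorem \ref{thm-ebd-B}(ii) to get $L^1 \hookrightarrow \dot B^{-n(1-1/p_1),\infty}_{p_1,1}$, raise the Lorentz index from $1$ to $\infty$ to reach $\dot B^{-n(1-1/p_1),\infty}_{p_1,\infty}$, and finally invoke the second part of Theorem \ref{thm-ebd-FB} with $r = \infty$, $p_2 = p$, $s_1 = -n(1-1/p_1)$, $s_2 = -n(1-1/p)$ (so that the condition $s_1 - s_2 = n/p_1 - n/p > 0$ is built in) to land in $\dot F^{-n(1-1/p),1}_{p,\infty}$.

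The single nontrivial idea is the use of Theorem \ref{thm-ebd-FB}, which trades Lebesgue integrability for smoothness at the critical rate; it is what allows an $L^\infty$ target (resp.\ $L^1$ source) to be reached from a Triebel--Lizorkin--Lorentz space at exponent $p$ by detouring through Besov--Lorentz spaces at a different critical exponent $p_i$. Once this detour is chosen, the remaining steps are pure bookkeeping of indices: monotonicity in the Lorentz index $q$ and in the summation index $r$, together with the quotations of Lemma \ref{thm-ebd-FB and BF}(i) and Theorem \ref{thm-ebd-B}. I do not anticipate any further obstacle.
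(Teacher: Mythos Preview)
Your proof is correct and follows essentially the same approach as the paper: both use Theorem \ref{SFE2-inhom} for the Sobolev--Lorentz identifications, Lemma \ref{thm-ebd-FB and BF}(i) together with Theorem \ref{thm-ebd-B}(iii) for the $BMO$ chain, and an auxiliary exponent (your $p_2>p$ for the $L^\infty$ chain, your $p_1<p$ for the $L^1$ chain) combined with Theorem \ref{thm-ebd-FB} and Theorem \ref{thm-ebd-B}(i),(ii). The only difference is cosmetic: you spell out the intermediate monotonicity steps in $r$ and $q$ (via Theorem \ref{ebd-hom-TLL}(i)) that the paper leaves implicit.
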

\begin{proof} By Theorem  \ref{SFE2-inhom},  Lemma \ref{thm-ebd-FB and BF}, and Theorem \ref{thm-ebd-B}, we have
\[
\dot{H}_{p,\infty}^{n/p} \hookrightarrow\hsf{n/p}{p}{\infty}{\infty} \hookrightarrow \hsb{n/p}{p}{\infty}{\infty} \hookrightarrow BMO .
\]
Choose any numbers   $p_1$ and  $p_2$  such that $1<p_1<p<p_2 < \infty$. Then by Theorems  \ref{thm-ebd-B}, \ref{thm-ebd-FB}, and \ref{SFE2-inhom},
\[
\dot{H}_{p,1}^{n/p} \hookrightarrow\hsf{n/p}{p}{1}{\infty}  \hookrightarrow  	  \dot B^{n/p_2 ,1}_{p_2} \hookrightarrow   L^\infty
\]
and
\[
   L^1   \hookrightarrow \dot{B}^{-n(1-1/p_1),\infty}_{p_1,1}\hookrightarrow \hsf{-n(1-1/p)}{p}{\infty}{1}  \hookrightarrow \dot{H}^{-n(1-1/p)}_{p,\infty}. \qedhere
\]
\end{proof}

The corresponding  embedding results also hold for inhomogeneous spaces, proofs  of which are exactly the same as that of Theorem \ref{thm-ebd-Linf} and so omitted.
\begin{thm}\label{thm-ebd-Linf-inhom}
If $1<p<\infty$, then
$$
H_{p,1}^{n/p} \hookrightarrow \ihsf{n/p}{p}{1}{\infty} \hookrightarrow L^\infty, \quad H_{p,\infty}^{n/p} \hookrightarrow\ihsf{n/p}{p}{\infty}{\infty} \hookrightarrow BMO,
$$
and
$$
 L^1 \hookrightarrow \ihsf{-n(1-1/p)}{p}{\infty}{1}\hookrightarrow H^{-n(1-1/p)}_{p,\infty}.
$$
\end{thm}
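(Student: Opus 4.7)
The plan is to mimic the proof of Theorem \ref{thm-ebd-Linf} line by line, exploiting Theorem \ref{prop-relation}(ii) to transfer the homogeneous embeddings back to the inhomogeneous setting whenever the smoothness is strictly positive. First, the three ``outer'' embeddings $H^{n/p}_{p,1}\hookrightarrow\ihsf{n/p}{p}{1}{\infty}$, $H^{n/p}_{p,\infty}\hookrightarrow\ihsf{n/p}{p}{\infty}{\infty}$, and $\ihsf{-n(1-1/p)}{p}{\infty}{1}\hookrightarrow H^{-n(1-1/p)}_{p,\infty}$ all reduce to the identification $H^s_{p,q}=\ihsf{s}{p}{q}{2}$ from Theorem \ref{SFE2-inhom}, combined with the trivial sequence-space inclusions $l^2\hookrightarrow l^\infty$ or $l^1\hookrightarrow l^2$.

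For the two middle embeddings $\ihsf{n/p}{p}{1}{\infty}\hookrightarrow L^\infty$ and $\ihsf{n/p}{p}{\infty}{\infty}\hookrightarrow BMO$, the smoothness $n/p$ is strictly positive, so Theorem \ref{prop-relation}(ii) gives $\ihsf{n/p}{p}{q}{r}=L^{p,q}\cap\hsf{n/p}{p}{q}{r}$. Consequently, for any $f\in\ihsf{n/p}{p}{q}{\infty}$, the restriction of $f$ to $\scc_0$ has homogeneous quasi-norm controlled by $\|f\|_{\ihsf{n/p}{p}{q}{\infty}}$, and Theorem \ref{thm-ebd-Linf} furnishes an $L^\infty$ (respectively $BMO$) representative with the required bound. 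The additional $L^{p,q}$-membership of $f$ removes the polynomial (respectively constant) ambiguity inherent to the homogeneous identifications, so the representative agrees with $f$ itself.

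The only embedding that demands a genuinely new argument is $L^1\hookrightarrow\ihsf{-n(1-1/p)}{p}{\infty}{1}$, because the smoothness $s:=-n(1-1/p)$ is negative and Theorem \ref{prop-relation}(ii) is no longer available. I fix a compatible Littlewood-Paley pair $\{\psi_j\}_{j\in\mathbb{N}_0}$ and $\{\vp_j\}_{j\in\mathbb{Z}}$ with $\psi_j=\vp_j$ for $j\ge 1$. Then for $f\in L^1$, the quasi-triangle inequality in $L^{p,\infty}(l^1)$ splits the inhomogeneous norm as
\[
\|f\|_{\ihsf{s}{p}{\infty}{1}} \les \|\Delta_0^\psi f\|_{L^{p,\infty}} + \bigl\|\{2^{js}\Delta_j^\vp f\}_{j\ge 1}\bigr\|_{L^{p,\infty}(l^1)} \le \|\Delta_0^\psi f\|_{L^{p,\infty}} + \|f\|_{\hsf{s}{p}{\infty}{1}}.
\]
The first term is bounded by $\|\psi_0^\vee\|_{L^{p,\infty}}\|f\|_{L^1}\les\|f\|_{L^1}$ via Young's inequality and the Schwartz decay of $\psi_0^\vee$, and the second by $\|f\|_{L^1}$ via the homogeneous embedding $L^1\hookrightarrow\hsf{s}{p}{\infty}{1}$ of Theorem \ref{thm-ebd-Linf}.

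The main obstacle is precisely this last chain: one cannot express the inhomogeneous norm as a homogeneous norm plus an $L^{p,q}$ norm when $s<0$, so the low-frequency piece has to be handled directly. The device of aligning the two Littlewood-Paley families at high frequency is what allows the $j\ge 1$ portion to be absorbed into the homogeneous estimate already supplied by Theorem \ref{thm-ebd-Linf}, while the $j=0$ term is controlled by a single Young inequality.
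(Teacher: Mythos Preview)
Your argument is correct but takes a genuinely different route from the paper. The paper's omitted proof is meant literally: one re-runs the chain of embeddings from Theorem \ref{thm-ebd-Linf} entirely within the inhomogeneous scale, using the inhomogeneous analogues of Theorems \ref{thm-ebd-B} and \ref{thm-ebd-FB} (which follow from the same interpolation arguments, now with Theorems \ref{ebd-inhom-TLL}, \ref{ebd-inhom-B} in place of their homogeneous counterparts). You instead \emph{transfer} the already-proved homogeneous embeddings to the inhomogeneous setting: for positive smoothness via the identification $\ihsf{s}{p}{q}{r}=L^{p,q}\cap\hsf{s}{p}{q}{r}$ of Theorem \ref{prop-relation}(ii), and for the negative-smoothness embedding $L^1\hookrightarrow\ihsf{-n(1-1/p)}{p}{\infty}{1}$ by a clean low/high-frequency split. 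Your approach is more economical in that it reuses Theorem \ref{thm-ebd-Linf} wholesale; the paper's approach avoids any discussion of representatives and polynomial ambiguity by staying in $\scc'$ throughout.

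One point in your write-up is a bit compressed. For $\ihsf{n/p}{p}{1}{\infty}\hookrightarrow L^\infty$, the phrase ``the additional $L^{p,q}$-membership of $f$ removes the polynomial ambiguity'' needs a line of justification: the homogeneous embedding produces the specific representative $F=\sum_{j\in\mathbb{Z}}\Delta_j^\vp f$, and $f-F$ is a priori only a constant (not zero), so one must check that $F=f$ in $\scc'$. This follows because for $f\in L^{p,1}$ with $1<p<\infty$ one has $\|[\psi(2^{N}\cdot)\hat f]^\vee\|_{L^\infty}\les 2^{-Nn/p}\|f\|_{L^{p,1}}\to 0$, so the low-frequency tail of the homogeneous Littlewood--Paley sum vanishes and the full sum recovers $f$; the paper carries out exactly this verification in the proof of Theorem \ref{cor-BL1}. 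For the $BMO$ case the constant ambiguity is harmless since $\|\cdot\|_{BMO}$ ignores constants, and your growth argument (both $f\in L^{p,\infty}$ and $g\in BMO$ satisfy $\int|\cdot|(1+|x|)^{-n-1}dx<\infty$) correctly forces the difference to be constant.
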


\subsection{Gagliardo-Nirenberg inequalities in Lorentz spaces}


From Theorems   \ref{thm-itp-inhomf},  \ref{thm-itp-homf}, and \ref{SFE2-inhom}, we immediately obtain the following  interpolation inequalities in Sobolev-Lorentz spaces, which are indeed   Galiardo-Nirenberg inequalities for fractional derivatives in  Lorentz spaces.

\begin{thm}\label{thm-itp-inhomh} Let $s, s_1,s_2\in\mathbb{R}$, $1< p, p_1,p_2<\infty$, and $1\le q, q_1,q_2\le \infty$.
Assume that
\[
 s_* -s \ge  \frac{n}{p_*}-\frac{n}{p} \ge 0 .
\]
Then the interpolation inequality
\begin{equation*}\label{thm-itp-sobolev-inhom}
\|f\|_{\ihs{s}{p}{q}}\lesssim \|f\|_{\ihs{s_1}{p_1}{q_1}}^{1-\theta}\|f\|_{\ihs{s_2}{p_2}{q_2}}^\theta
\end{equation*}
holds for all $f \in\ihs{s_1}{p_1}{q_1} \cap \ihs{s_2}{p_2}{q_2}$, if one of the following conditions is satisfied:
\begin{enumerate}[label = \textup{(\roman*)}]
\item $q_* \le q$.
\item $s_1=s_2$, $p_* =p$, $p_1 \neq p_2$.
\item $s_* >s$, $p_*=p$, $p_1\neq p_2$.
\item  $s_* -s > n/p_* -n/p >0$.
\item  $s_* -s  = n/p_* -n/p >0$, $s_2 -s_1 \neq n/p_2 -n/p_1$.
\end{enumerate}
\end{thm}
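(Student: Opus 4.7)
The plan is to reduce the Sobolev--Lorentz interpolation inequality to its Triebel--Lizorkin--Lorentz counterpart, already proved in Theorem \ref{thm-itp-inhomf}, by freezing the fine parameter at $r = 2$. The bridge is Theorem \ref{SFE2-inhom}, which supplies the norm equivalence
\[
\|g\|_{\ihs{\sigma}{\pi}{\rho}} \sim \|g\|_{\ihsf{\sigma}{\pi}{\rho}{2}}
\]
for every $\sigma \in \mathbb{R}$, $1 < \pi < \infty$, and $1 \le \rho \le \infty$. In particular, the inequality claimed in the theorem is equivalent to
\[
\|f\|_{\ihsf{s}{p}{q}{2}} \lesssim \|f\|_{\ihsf{s_1}{p_1}{q_1}{2}}^{1-\theta}\,\|f\|_{\ihsf{s_2}{p_2}{q_2}{2}}^{\theta},
\]
and I would obtain this by invoking the appropriate clause of Theorem \ref{thm-itp-inhomf} with $r = r_1 = r_2 = 2$ (so $r_* = 2$).

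It then remains to check that each of the hypotheses (i)--(v) of the present theorem falls under one of the sufficient conditions in Theorem \ref{thm-itp-inhomf}, the common scaling assumption $s_* - s \ge n/p_* - n/p \ge 0$ being shared by both statements. Since $r_* = 2 \le r$ is automatic in our reduction, hypothesis (i) matches clause (i) of Theorem \ref{thm-itp-inhomf}; hypothesis (ii) matches clause (ii) of Theorem \ref{thm-itp-inhomf} via $\max(r_1, r_2) = 2 = r$ (noting that $s_* = s_1 = s_2$ together with $p_* = p$ and the scaling assumption forces $s \le s_1$, which is what Theorem \ref{thm-itp-inhomf}(ii) uses through the embedding Theorem \ref{ebd-inhom-TLL}); and hypotheses (iii), (iv), (v) line up directly with clauses (v), (vi), (vii) of Theorem \ref{thm-itp-inhomf}, respectively.

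The proof is thus a direct assembly of Theorems \ref{thm-itp-inhomf} and \ref{SFE2-inhom}, in line with the announcement in the sentence preceding the statement. There is no genuine technical obstacle; the only point to verify is that fixing $r = 2$ does not exclude any of the five cases listed, which it does not, because none of (i)--(v) imposes a condition on $r$ beyond what $r_* = 2$ already satisfies. The clauses (iii) and (iv) of Theorem \ref{thm-itp-inhomf}, which would genuinely need more room in the fine parameter, are not needed here precisely because hypotheses (iii)--(v) of the current theorem provide either strict gain in $s_*$ or strict scaling, which Theorem \ref{thm-itp-inhomf} converts into the inequality through its internal embedding step.
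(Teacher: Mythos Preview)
Your proposal is correct and follows exactly the approach the paper takes: the paper simply states that the result follows immediately from Theorems \ref{thm-itp-inhomf} and \ref{SFE2-inhom}, and you have spelled out precisely how, by specializing to $r=r_1=r_2=2$ and matching each clause (i)--(v) here to the corresponding clause (i), (ii), (v), (vi), (vii) of Theorem \ref{thm-itp-inhomf}. The parenthetical remark about $s\le s_1$ under hypothesis (ii) is correct but superfluous, since that inequality is already absorbed into the standing assumption $s_*-s\ge n/p_*-n/p\ge 0$ and is handled internally by Theorem \ref{thm-itp-inhomf}.
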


\begin{thm}\label{thm-itp-homh} Let $s, s_1,s_2\in\mathbb{R}$, $1< p, p_1,p_2<\infty$, and $1\le q, q_1,q_2\le \infty$.
Assume that
\[
 s_* -s  =  \frac{n}{p_*}-\frac{n}{p} \ge 0 .
\]
 Then the interpolation inequality
\begin{equation*}
\|f\|_{\hs{s}{p}{q}}\lesssim \|f\|_{\hs{s_1}{p_1}{q_1}}^{1-\theta}\|f\|_{\hs{s_2}{p_2}{q_2}}^\theta
\end{equation*}
holds for all $f \in\hs{s_1}{p_1}{q_1} \cap \hs{s_2}{p_2}{q_2}$, if one of the following conditions is satisfied:
\begin{enumerate}[label = \textup{(\roman*)}]
\item $q_* \le q$.
\item $s =s_1=s_2 $, $p_1 \neq p_2$.
\item  $ s_* >s$, $s_2 -s_1 \neq n/p_2 -n/p_1$.
\end{enumerate}
\end{thm}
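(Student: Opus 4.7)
The plan is to deduce Theorem \ref{thm-itp-homh} as a direct corollary of its Triebel-Lizorkin-Lorentz counterpart Theorem \ref{thm-itp-homf}, via the identification $\hsf{s}{p}{q}{2} = \hs{s}{p}{q}$ supplied by Theorem \ref{SFE2-inhom} (valid for $1<p<\infty$ and $1\le q\le\infty$). Concretely, I will take $r = r_1 = r_2 = 2$ in Theorem \ref{thm-itp-homf}, so that $r_* = 2$ automatically, and then transfer the resulting inequality to the Sobolev-Lorentz setting using the equivalence of quasi-norms $\|f\|_{\hs{s}{p}{q}} \sim \|f\|_{\hsf{s}{p}{q}{2}}$.

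Under the assumption $s_* - s = n/p_* - n/p \ge 0$, matching the sufficient conditions of the two theorems is straightforward. The choice $r = r_1 = r_2 = 2$ makes $r_* \le r$ in condition (i) of \ref{thm-itp-homf} and $\max(r_1, r_2) \le r$ in condition (ii) of \ref{thm-itp-homf} trivially hold. Then condition (i) of \ref{thm-itp-homh} (namely $q_* \le q$) is precisely condition (i) of \ref{thm-itp-homf}; condition (ii) of \ref{thm-itp-homh} ($s = s_1 = s_2$, $p_1 \neq p_2$) coincides with condition (ii) of \ref{thm-itp-homf}; and condition (iii) of \ref{thm-itp-homh} ($s_* > s$ together with $s_2 - s_1 \neq n/p_2 - n/p_1$) is condition (v) of \ref{thm-itp-homf}.

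In each of the three cases, applying Theorem \ref{thm-itp-homf} yields
$$
\|f\|_{\hsf{s}{p}{q}{2}} \lesssim \|f\|_{\hsf{s_1}{p_1}{q_1}{2}}^{1-\theta}\|f\|_{\hsf{s_2}{p_2}{q_2}{2}}^\theta
$$
for all $f \in \hsf{s_1}{p_1}{q_1}{2} \cap \hsf{s_2}{p_2}{q_2}{2}$, and invoking Theorem \ref{SFE2-inhom} on each of the three factors produces the desired Sobolev-Lorentz inequality. Because every hypothesis used on the Triebel-Lizorkin-Lorentz side is delivered verbatim by the corresponding hypothesis here, and because the norm equivalence in Theorem \ref{SFE2-inhom} is two-sided (so that the ambient function classes match), there is no substantive obstacle; the only item requiring minor care is to confirm that the intersection $\hs{s_1}{p_1}{q_1} \cap \hs{s_2}{p_2}{q_2}$ coincides with $\hsf{s_1}{p_1}{q_1}{2} \cap \hsf{s_2}{p_2}{q_2}{2}$, which is again immediate from Theorem \ref{SFE2-inhom}. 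Accordingly, the written proof can be a one-liner citing these two results.
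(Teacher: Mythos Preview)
Your proposal is correct and follows essentially the same approach as the paper: the paper states that Theorems \ref{thm-itp-inhomh} and \ref{thm-itp-homh} are obtained immediately from Theorems \ref{thm-itp-inhomf}, \ref{thm-itp-homf}, and \ref{SFE2-inhom}, which is precisely your argument of specializing $r=r_1=r_2=2$ and using the norm equivalence $\hsf{s}{p}{q}{2}=\hs{s}{p}{q}$.
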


Next,   we  show that   Gagliardo-Nirenberg  inequalities in Lorentz spaces    may hold even for    the limiting case when some exponent is equal to $1$ or  $\infty$.


\begin{thm}\label{cor-BL1}
Let $s > 0$, $1 <  p  \le  \infty$, $1 \le  p_1 , p_2 \le \infty$,  and $0<\theta<1$ satisfy
$$
\frac{1}{p}=\frac{1-\theta}{p_1}+\theta\left(\frac{1}{p_2}-\frac{s}{n}\right)
\quad\text{and}\quad\frac{1}{p_1} \neq  \frac{1}{p_2}-\frac{s}{n}.
$$
Assume also that
  $  q =1$ if $p<\infty$ and   $q=\infty$ if $p=\infty$, and that
  $  q_1 = \infty$ if $ p_1 >1$ and     $q_1 =1$ if $p_1 =1$.
Then for each  $f \in L^{p_1 , q_1} \cap \hsb{s}{p_2}{\infty}{\infty}$, we have
\[
\|f  -c  \|_{L^{p,q}}\lesssim  \|f\|_{L^{p_1,q_1}}^{1-\theta}\|f\|_{\hsb{s}{p_2}{\infty}{\infty}}^\theta ,
\]
where  $c = c_{p_1} (f)$ is some constant with $c_{p_1} (f) =0 $ if $p_1 < \infty$.
\end{thm}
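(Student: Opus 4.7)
The strategy is to prove the estimate first in the homogeneous Besov-Lorentz scale of smoothness zero and then descend to the Lorentz norm via Theorem~\ref{prop-relation}(iii); throughout, $f$ is identified with its canonical image in $\scc_0'$.

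I would apply Theorem~\ref{thm-itp-homb} with $s_1 = 0$, $s_2 = s$, $q_2 = \infty$, $r_1 = r_2 = \infty$, and target parameters (smoothness, integrability, Lorentz, summability) $= (0, p, q, 1)$. The scaling hypothesis on $1/p$ rearranges to $\theta s = n/p_* - n/p$, which is precisely the dimensional relation $s_* - 0 = n/p_* - n/p \geq 0$ required by that theorem. Since $s_* = \theta s > 0$ and $s_2 - s_1 = s \neq n/p_2 - n/p_1$ by hypothesis, condition (v) of Theorem~\ref{thm-itp-homb} applies, imposing no further constraint on $q$ and yielding
\[
\|f\|_{\hsb{0}{p}{q}{1}} \lesssim \|f\|_{\hsb{0}{p_1}{q_1}{\infty}}^{1-\theta}\|f\|_{\hsb{s}{p_2}{\infty}{\infty}}^\theta.
\]

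Next I would invoke Theorem~\ref{prop-relation}(iii) twice: once as $L^{p_1,q_1} \hookrightarrow \hsb{0}{p_1}{q_1}{\infty}$, valid because the hypotheses on $(p_1,q_1)$ force either $1<p_1<\infty$ or $1\le p_1 = q_1 \le \infty$; and once as $\hsb{0}{p}{q}{1} \hookrightarrow L^{p,q}$, valid since $1 < p < \infty$ with $q = 1$, or else $p = q = \infty$. Composing these embeddings with the interpolation estimate yields an element $g \in L^{p,q}$ that represents the class of $f$ in $\scc_0'$ and satisfies $\|g\|_{L^{p,q}} \lesssim \|f\|_{L^{p_1,q_1}}^{1-\theta}\|f\|_{\hsb{s}{p_2}{\infty}{\infty}}^\theta$.

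By Lemma~\ref{uniqueness}, $f - g$ equals a polynomial $P$. A nonconstant polynomial has $|\{|P|>N\}| = \infty$ for every $N$, which is incompatible with $P = f - g$ being tame in measure (finite distribution function when $p_1, p < \infty$, pointwise bounded in the remaining cases), so $P = c$ must be a constant; this already gives $\|f - c\|_{L^{p,q}} \lesssim$~RHS for some $c$. To conclude $c = 0$ when $p_1 < \infty$: if $p < \infty$, assuming $c \neq 0$ gives $\{|g|>\alpha\} \supset \{|f|<|c|-\alpha\}$ of infinite measure for $0 < \alpha < |c|$ (using $\mu_f(\beta) < \infty$ for all $\beta > 0$), contradicting $g \in L^{p,q}$. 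For $p = \infty$, I would unpack $\|f\|_{\hsb{0}{\infty}{\infty}{1}} = \sum_j \|\Delta_j^\varphi f\|_\infty$, identify $g = \sum_j \Delta_j^\varphi f$ as the absolutely $L^\infty$-convergent Littlewood-Paley sum, and show $g = f$ in $\scc'$ (not merely $\scc_0'$) by verifying that the low-frequency blocks $\Delta_j^\varphi f$ tend to zero in $\scc'$ as $j \to -\infty$, which follows from $\varphi_j^\vee \ast \psi \to 0$ in $L^\infty$ for each $\psi \in \scc$ (since $\varphi_j \widehat{\psi}$ is supported in a shrinking annulus near the origin). The main obstacle is this final step, which rules out a hidden low-frequency constant in the $p = \infty$, $p_1 < \infty$ case.
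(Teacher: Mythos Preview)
Your proposal is correct and follows the paper's own route almost verbatim: apply Theorem~\ref{thm-itp-homb} (condition (v)) with $(s_1,s_2)=(0,s)$ to land in $\dot B^{0,1}_{p,q}$, then use the two embeddings of Theorem~\ref{prop-relation}(iii), and finally argue that the polynomial ambiguity is a constant, vanishing when $p_1<\infty$.

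One small correction in the final step for $p=\infty$, $p_1<\infty$: the convergence $\varphi_j^\vee\ast\psi\to 0$ in $L^\infty$ alone does not give $\langle f,\varphi_j^\vee\ast\psi\rangle\to 0$ unless $f\in L^1$. You need convergence in a predual of $L^{p_1,q_1}$; this follows immediately by interpolating your $L^\infty$ bound against the uniform bound $\|\varphi_j^\vee\ast\psi\|_{L^1}\le\|\varphi_0^\vee\|_{L^1}\|\psi\|_{L^1}$, giving $\|\varphi_j^\vee\ast\psi\|_{L^{p_1',q_1'}}\to 0$ for $p_1<\infty$. The paper sidesteps this by first observing that $f\in L^\infty$ (since $f-c\in L^\infty$ and $c$ is constant) and then arguing that for $f\in L^{p_1,q_1}\cap L^\infty$ the partial sums $\sum_{|j|\le N}\Delta_j^\varphi f$ converge to $f$ in $\scc'$; your low-frequency argument and theirs are equivalent once patched.
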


\begin{proof} Suppose that
$f \in L^{p_1 , q_1} \cap \hsb{s}{p_2}{\infty}{\infty}$.
Then by Theorems \ref{thm-itp-homb} and \ref{prop-relation}, there is a constant $c  $ such that
\[
\|f  -c  \|_{L^{p,q}} \les \|f\|_{\dot{B}_{p,q}^{0,1}} \lesssim  \|f\|_{L^{p_1,q_1}}^{1-\theta}\|f\|_{\hsb{s}{p_2}{\infty}{\infty}}^\theta .
\]
It remains to show that if $p_1 < \infty$, then we can take $c=0$. It is clear that if $p <\infty$ and $p_1 < \infty$, then $c  $ must be zero.

Suppose that $p=\infty$ and $p_1 <\infty$.
Choosing any    $\psi  \in C_c^\infty(\mathbb{R}^n)$   such that $\psi=1$ on $\{|\xi|\le1\}$ and $\psi=0$ on $\{|\xi|\ge3/2\}$, we define $\vp_j (\cdot) = \psi (2^{-j}\cdot)-\psi (2^{-j+1}\cdot)$ on $\mathbb{R}^n$ for each $j\in\mathbb{Z}$.
Then  since  $\vp=\{\vp_j\}_{j\in\mathbb{Z}}$  is  a sequence in $C_c^\infty(\mathbb{R}^n)$ satisfying \eqref{cond1-hom}, \eqref{cond2-hom}, and \eqref{cond3-hom}, we have
\[
\sum_{j \in \mathbb{Z}} \|\Delta_j^\vp f \|_{L^{\infty}} \les \|f\|_{\dot{B}_{\infty}^{0,1}} <  \infty ,
\]
which implies that the series $\sum_{j \in \mathbb{Z}} \Delta_j^\vp f$ converges in $L^{\infty}$ to a function   $F$ satisfying $\|F\|_{L^{\infty}} \le   \|f\|_{\dot{B}^{0,1}_{\infty}}$. On the other hand, since  $f \in   L^{p_1 , q_1} \cap L^{\infty}$, it can be easily deduced that the series $\sum_{j \in \mathbb{Z}} \Delta_j^\vp f$ indeed converges to $f$ in the sense of distributions. Hence it follows that $F = f$ identically on $\mathbb{R}^n$. This completes the proof.
\end{proof}

Recall       from Lemma  \ref{thm-ebd-FB and BF}, Theorems  \ref{prop-relation}, and \ref{SFE2-inhom}   that
  $\hsf{s}{p}{\infty}{\infty}  \hookrightarrow \hsb{s}{p}{\infty}{\infty}$ for  $1 \le  p< \infty$ and     $\dot{H}^{s}_{p  ,\infty}  \hookrightarrow \hsf{s}{p}{\infty}{\infty} $ if    $1 <  p< \infty$.
Therefore, from Theorem \ref{cor-BL1}, we can derive various interpolation inequalities which   generalize Gagliardo-Nirenberg inequalities. Some of them are listed below.

\begin{eg} The conditions of  Theorem \ref{cor-BL1} are all satisfied when $1 \le  p_1=p_2 <   p <\infty$,  $0< sp_1 <n$, and $1/p_1 -1/p =\theta s/n$. Hence it follows from Theorem \ref{cor-BL1} that  if $0< sp_1 < n$, $1 \le p_1 < p< np_1/(n-sp_1 )$, and $\theta = n(1/p_1 -1/p)/s$, then
\[
\|f\|_{L^{p,1}}  \lesssim \|f\|_{L^{p_1,q_1}}^{1-\theta}\|f\|_{\hsb{s}{p_1}{\infty}{\infty}}^\theta
 \les \|f\|_{L^{p_1,q_1}}^{1-\theta}\|f\|_{\hsf{s}{p_1}{\infty}{\infty}}^{\theta}
 ,
\]
where $q_1 =\infty$ if $p_1 >1$ and $q_1 =1$ if $p_1 =1$. Moreover, if $p_1 >1$, then
\[
\|f\|_{L^{p,1}} \les \|f\|_{L^{p_1,\infty}}^{1-\theta}\|\Lambda^s f\|_{L^{p_1 ,\infty}}^{\theta} \quad\mbox{for all}\,\, f \in H^{s}_{p_1,\infty} .
\]
\end{eg}

\begin{eg} The conditions of  Theorem \ref{cor-BL1} are all satisfied when $1< p < \infty$ $p_1 =\infty$, $1 \le  p_2   <\infty$,  $0<sp_2<n$, and $1/p  =\theta (1/p_2 -s/n )$. Hence,    if $1 \le p_2 < \infty$ and   $0< 1/p < 1/p_2 -s/n$, then
\[
\|f -c  \|_{L^{p,1}}   \les \|f\|_{L^{\infty}}^{1-\theta}\|f\|_{\hsb{s}{p_2}{\infty}{\infty}}^{\theta} \les \|f\|_{L^{\infty}}^{1-\theta}\|f\|_{\hsf{s}{p_2}{\infty}{\infty}}^{\theta}
\]
for some constant $c $,
where $0< \theta <1$ is defined by $1/p  =\theta (1/p_2 -s/n )$.
\end{eg}

\begin{eg} Let $1 \le p_1 <p < \infty$, $1<p_2 < \infty$, and $s =n/p_2 $. Then taking  $\theta=1-p_1/p$ in   Theorem \ref{cor-BL1}, we have
\begin{align*}
\|f\|_{L^{p,1}}&\lesssim    \|f\|_{L^{p_1, q_1}}^{p_1 /p}\|f\|_{\hsb{n/p_2}{p_2}{\infty}{\infty}}^{1-p_1/p}  \\
&\lesssim    \|f\|_{L^{p_1,q_1}}^{p_1/p}\|f\|_{\hsf{n/p_2}{p_2}{\infty}{\infty}}^{1-p_1/p} \\
&\lesssim    \|f\|_{L^{p_1, q_1}}^{p_1 /p}\|\Lambda^{n/p_2} f\|_{L^{p_2 ,\infty}}^{1-p_1/p}  ,
\end{align*}
where $q_1 =\infty$ if $p_1 >1$ and $q_1 =1$ if $p_1 =1$.
In particular, if   $k \in  \mathbb{N}$,  $1 \le k < n$, and $n/k < p<\infty$, then
$$
\|f\|_{L^{p,1}}\les \|f\|_{L^{n/k,\infty}}^{n/kp} \|  f\|_{\dot{W}^{k, n/k,\infty}}^{1-n/kp} \quad\mbox{for all}\,\, f \in W^{k,n/k,\infty} ,
$$
which refines the famous Ladyzhenskaya inequality in \cite{lad}.
\end{eg}

\begin{eg}
Let  $s >0$ and $1< p  <\infty$ satisfy $sp >n$. Then by   Theorem \ref{cor-BL1},
\begin{align*}
\|f\|_{L^\infty}&\lesssim   \|f\|_{L^{p,\infty}}^{1-n/sp}\|f\|_{\hsb{s}{p}{\infty}{\infty}}^{n/sp} \quad\quad\mbox{for all}\,\, f \in B^{s,\infty}_{p,\infty}  \\
&\lesssim    \|f\|_{L^{p,\infty}}^{1-n/sp}\|f\|_{\hsf{s}{p}{\infty}{\infty}}^{n/sp} \quad\quad\mbox{for all}\,\, f \in F^{s,\infty}_{p,\infty} \\
&\lesssim   \|f\|_{L^{p,\infty}}^{1-n/sp} \| \Lambda^s f\|_{L^{p,\infty}}^{n/sp} \quad\mbox{for all}\,\, f \in H^{s}_{p,\infty} .
\end{align*}
By   Theorem \ref{cor-BL1}, we also have
$$
\|f\|_{L^\infty}\les \|f\|_{L^{1}}^{1-\theta} \| \Lambda^s f\|_{L^{p,\infty}}^{\theta} \quad\mbox{for all}\,\, f \in L^1 \cap \dot{H}^{s}_{p,\infty} ,
$$
where $0< \theta <1$ is defined by $\theta (s/n -1/p) = 1-\theta $.
\end{eg}

\begin{eg}
Let $s>0$ and $1< p<\infty$. Then by   Theorem \ref{cor-BL1}, we have
$$
\|f\|_{L^{p,1}}\les \|f\|_{L^{1}}^{1-\theta} \| \Lambda^s f\|_{L^{p,\infty}}^{\theta} \quad\mbox{for all}\,\, f \in L^1 \cap \dot{H}^{s}_{p,\infty}  ,
$$
where $0< \theta <1$ is defined by $\theta s/n = (1-\theta )(1-1/p)$. In particular, taking $s=1$ and  $p=2$, we have
\[
\|f\|_{L^{2,1}} \les \|f\|_{L^{1}}^{2/(n+2)} \|\nabla f\|_{L^{2,\infty}}^{n/(n+2)}\quad\mbox{for all}\,\, f \in L^{1} \cap \dot{W}^{1,2, \infty},
\]
which refines Nash's inequality in \cite{nash}.
\end{eg}

The $L^1$-norms in the above examples can be replaced by the weaker $L^{1,\infty}$-quasinorm, under some additional assumption for the case when $sp_2 >n$.

\begin{thm}\label{cor2}
Let $s > 0$, $1 <  p   \le \infty$, $1 \le     p_1 \le  \infty$, $1< p_2 <  \infty$,  and $0<\theta<1$ satisfy
$$
\frac{1}{p}=\frac{1-\theta}{p_1}+\theta\left(\frac{1}{p_2}-\frac{s}{n}\right)
\quad\text{and}\quad\frac{1}{p_1} \neq  \frac{1}{p_2}-\frac{s}{n}.
$$
Assume also that   $  q =1$ if $p<\infty$ and   $q=\infty$ if $p=\infty$.
\begin{enumerate}[label=\textup{(\roman*)}]
\item If $ p_1 >1$ or $sp_2 \le n$, then
$$
\|f -c  \|_{L^{p,q}}\les \|f\|_{L^{p_1,\infty}}^{1-\theta} \| \Lambda^s f\|_{L^{p_2 ,\infty}}^{\theta} \quad\mbox{for all}\,\, f\in L^{p_1, \infty} \cap \dot{H}^{s}_{p_2 ,\infty} ,
$$
where  $c = c_{p_1} (f)$ is some constant with $c_{p_1} (f) =0 $ if $p_1 < \infty$.
\item If $p_1 =1$ and $sp_2 >n$, then
$$
\|f   \|_{L^{p,q}}\les \|f\|_{L^{1,\infty}}^{1-\theta} \| \Lambda^s f\|_{L^{p_2 ,\infty}}^{\theta} \quad\mbox{for all}\,\, f\in L^{1, \infty} \cap H^{s}_{p_2 ,\infty} .
$$
\end{enumerate}
\end{thm}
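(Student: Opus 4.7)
The proof splits into three cases depending on the values of $p_1$ and $sp_2$.

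First, when $p_1 > 1$, Theorem \ref{cor-BL1} applies directly: its hypothesis ``$q_1 = \infty$ if $p_1 > 1$'' matches the $L^{p_1, \infty}$-norm here. The conclusion of cor-BL1 involves $\|f\|_{\hsb{s}{p_2}{\infty}{\infty}}$, which I would bound by $\|\Lambda^s f\|_{L^{p_2, \infty}}$ through the chain $\hs{s}{p_2}{\infty} = \hsf{s}{p_2}{\infty}{2} \hookrightarrow \hsf{s}{p_2}{\infty}{\infty} \hookrightarrow \hsb{s}{p_2}{\infty}{\infty}$ provided by Theorem \ref{SFE2-inhom} and Lemma \ref{thm-ebd-FB and BF}.

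Second, for $p_1 = 1$ in part (i), the key auxiliary result is the embedding
\[
L^{1, \infty} \hookrightarrow \hsb{-n(1-1/p_3)}{p_3}{\infty}{\infty} \qquad (1 < p_3 < \infty),
\]
which follows from a Young--O'Neil convolution inequality in Lorentz spaces: $\|\Delta_j^\vp f\|_{L^{p_3, \infty}} \lesssim \|\vp_j^\vee\|_{L^{p_3, 1}} \|f\|_{L^{1, \infty}} \sim 2^{jn(1-1/p_3)} \|f\|_{L^{1, \infty}}$. Combining this with $\hs{s}{p_2}{\infty} \hookrightarrow \hsb{s}{p_2}{\infty}{\infty}$, I would apply Theorem \ref{thm-itp-homb}(v) with interpolation parameter equal to $\theta$, choosing $p_3$ so that $s_* = (n/p)\bigl((p-1)/p_2 - (1-1/p_3)\bigr) > 0$, which can always be arranged for $p \in (1, \infty)$. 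This produces $\|f\|_{\hsb{s_*}{p_*}{1}{1}} \lesssim \|f\|_{L^{1, \infty}}^{1-\theta} \|\Lambda^s f\|_{L^{p_2, \infty}}^\theta$, and the scaling identity $s_* = n/p_* - n/p$ then triggers the Sobolev-type embeddings $\hsb{s_*}{p_*}{1}{1} \hookrightarrow \hsb{0}{p}{1}{1} \hookrightarrow L^{p, 1}$ coming from Theorems \ref{ebd-hom-B}(ii) and \ref{prop-relation}(iii), finishing the case.

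For part (ii), since $sp_2 > n$, the inhomogeneous embedding $\ihs{s}{p_2}{\infty} \hookrightarrow \ihs{n/\tilde p_2}{\tilde p_2}{1} \hookrightarrow L^\infty$ with $\tilde p_2$ slightly larger than $p_2$ (via Theorems \ref{embedding-inhom}(iii) and \ref{thm-ebd-Linf-inhom}) shows $f \in L^\infty$. Theorem \ref{cor-BL1} applied with source exponent $p_2$ then yields
\[
\|f\|_{L^\infty} \lesssim \|f\|_{L^{p_2, \infty}}^{1-n/(sp_2)} \|\Lambda^s f\|_{L^{p_2, \infty}}^{n/(sp_2)},
\]
which combined with the elementary Lorentz interpolation $\|f\|_{L^{p_2, \infty}} \lesssim \|f\|_{L^{1, \infty}}^{1/p_2} \|f\|_{L^\infty}^{1/p_2'}$ (deduced directly from $\mu_f(\alpha) \le \alpha^{-1}\|f\|_{L^{1, \infty}}$ and $\mu_f(\alpha) = 0$ for $\alpha > \|f\|_{L^\infty}$) and solved algebraically gives the $p = \infty$ case. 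For $p < \infty$ (hence $q = 1$), the analogous layer-cake estimate $\|f\|_{L^{p, 1}} \lesssim \|f\|_{L^{1, \infty}}^{1/p} \|f\|_{L^\infty}^{1/p'}$ substituted with this $\|f\|_{L^\infty}$ bound completes the proof.

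The main obstacle is the critical sub-case $p_1 = 1$, $sp_2 = n$ of part (i): at this borderline, $\hs{n/p_2}{p_2}{\infty}$ only embeds into $BMO$ rather than into a Lorentz space, so the simpler route via the embedding $\hs{s}{p_2}{\infty} \hookrightarrow L^{p_2', \infty}$ (which handles $sp_2 < n$ through Lemma \ref{interpolation-Lp}) is unavailable. The Besov-Lorentz detour succeeds, but the strict positivity $s_* > 0$ required to activate Theorem \ref{thm-itp-homb}(v) in the regime $r_* = \infty > 1 = r$ hinges on a careful choice of the auxiliary exponent $p_3$, which is the technical heart of this case.
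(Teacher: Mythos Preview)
Your treatment of the case $p_1 > 1$ matches the paper's, and your argument for part (ii) is correct and in fact somewhat cleaner than the paper's bootstrap (the paper introduces an auxiliary exponent $\bar p$ close to $1$, applies the already-proven case $p_1 > 1$ at exponent $\bar p$, and then absorbs a power of $\|f\|_{L^{p,q}}$ via Young's inequality, whereas you go through $L^\infty$ directly).

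The genuine gap is in the sub-case $p_1 = 1$, $sp_2 = n$ of part (i). Your ``Besov--Lorentz detour'' rests on the embedding $L^{1,\infty} \hookrightarrow \dot B^{-n(1-1/p_3),\infty}_{p_3,\infty}$, which is false. The convolution inequality $\|\vp_j^\vee * f\|_{L^{p_3,\infty}} \lesssim \|\vp_j^\vee\|_{L^{p_3,1}}\|f\|_{L^{1,\infty}}$ you invoke is \emph{not} an instance of Young--O'Neil: O'Neil's convolution theorem requires the three primary Lorentz exponents to lie strictly between $1$ and $\infty$, and the endpoint $L^{1,\infty}$ is excluded. A concrete counterexample: take $f_\epsilon(y) = |y|^{-n}\chi_{\{\epsilon < |y| < 1\}}$, so that $\|f_\epsilon\|_{L^{1,\infty}} \lesssim 1$ uniformly in $\epsilon$, while $\|\Delta_0^\vp f_\epsilon\|_{L^{p_3,\infty}} \sim \log(1/\epsilon)$ because $\Delta_0^\vp f_\epsilon(x) \approx \vp_0^\vee(x)\cdot c_n\log(1/\epsilon)$ for $x$ in a fixed ball. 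Hence no uniform bound of the claimed type can hold, and the embedding fails.

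The paper handles this critical endpoint by a completely different route: it uses the embedding $\dot H^{n/p_2}_{p_2,\infty} \hookrightarrow BMO$ (Theorem~\ref{thm-ebd-Linf}) together with Hanks' interpolation identity $(L^{1,\infty}, BMO)_{1-1/p,\,1} = L^{p,1}$. This is precisely the missing idea in your proposal. (Two minor slips, unrelated to the gap: the Sobolev target for $sp_2 < n$ should be $L^{np_2/(n-sp_2),\infty}$, not $L^{p_2',\infty}$; and your displayed expression for $s_*$ does not match the correct value $s_* = \theta s - n(1-\theta)(1-1/p_3)$.)
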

\begin{proof}
\noindent \emph{Case I.} If $p_1 >1$, then the result is a  special case of  Theorem \ref{cor-BL1}.

\noindent \emph{Case II.} Suppose that $p_1 =1$, $sp_2 \le n$, and $f\in L^{1, \infty} \cap \dot{H}^{s}_{p_2 ,\infty}$. If  $sp_2 < n$, then by Lemma \ref{interpolation-Lp} and Theorem \ref{ebd-hom},
$$
\|f\|_{L^{p,1}} \les \|f\|_{L^{1,\infty}}^{1-\theta}\|f\|_{L^{np_2 /(n-sp_2),\infty}}^\theta \les \|f\|_{L^{1,\infty}}^{1-\theta}\|\Lambda^s f \|_{L^{p_2 ,\infty}}^\theta.
$$
Suppose next that $sp_2 =n$.
Then it follows from  Theorem \ref{thm-ebd-Linf} that  $\hs{s}{p_2}{\infty}  \hookrightarrow BMO.$ Moreover, it was shown by Hanks  \cite{hanks} that
$(L^{1,\infty}, BMO)_{1-1/p,1}=L^{p,1}$.
Consequently,
$$
\|f\|_{L^{p,1}}\les \|f\|_{L^{1,\infty}}^{1-\theta}\|f\|_{BMO}^{\theta} \les \|f\|_{L^{1,\infty}}^{1-\theta} \|\Lambda^s f\|_{L^{p_2,\infty}}^\theta.
$$

\noindent \emph{Case III.} Suppose that  $p_1 =1$,  $sp_2 >n$,  and $f\in L^{1, \infty} \cap H^{s}_{p_2 ,\infty}$. Then it follows from Theorems \ref{embedding-inhom} and   \ref{thm-ebd-Linf-inhom} that
$f\in L^{1 ,\infty}\cap L^\infty \subset L^{p,q} $.
Since
\[
\frac{1}{p_2}-\frac{s}{n} < \frac{1}{p}< 1 \quad\mbox{and}\quad 0< \theta = \frac{1 -1/p }{1-1/p_2 +s/n } <1,
\]
there exists  a number $\overline{p} >1 $, close to $1$, such that
\[
\frac{1}{p_2}-\frac{s}{n} < \frac{1}{p} < \frac{1}{\overline{p}}
\quad\mbox{and}\quad
0< \lambda := \frac{1/\overline{p} -1/p  }{1/\overline{p}-1/p_2  +s/n  } < 1 .
\]
Define
\[
\mu = \frac{1-1/\overline{p}}{1-1/p} .
\]
Then since $1/ \overline{p} = (1-\mu)/1 +\mu/ p$ and $ 0< \mu < 1$,
it follows from Lemma \ref{interpolation-Lp} that
\[
\|f\|_{L^{\overline{p},\infty}} \les  \|f\|_{L^{1,\infty}}^{1-\mu} \|f\|_{L^{p,\infty}}^\mu .
\]
Note also that
$$
\frac{1}{p}=\frac{1-\lambda}{\overline{p}}+\lambda\left(\frac{1}{p_2}-\frac{s}{n}\right)
\quad\text{and}\quad\frac{1}{\overline{p}} \neq  \frac{1}{p_2}-\frac{s}{n}.
$$
Hence by Case I,
\begin{align*}
\|f\|_{L^{p,q}} &\les \|f\|_{L^{\overline{p},\infty}}^{1-\lambda}\|\Lambda^s f \|_{L^{p_2,\infty}}^\lambda \\
& \les  \left( \|f\|_{L^{1,\infty}}^{1-\mu} \|f\|_{L^{p,\infty}}^\mu \right)^{1-\lambda}\|\Lambda^s f \|_{L^{p_2,\infty}}^\lambda \\
& \le C  \left( \|f\|_{L^{1,\infty}}^{(1-\mu)(1-\lambda)} \|\Lambda^s f \|_{L^{p_2,\infty}}^{\lambda} \right)^{\frac{ 1}{1-\mu (1-\lambda)}} + \frac{1}{2} \|f\|_{L^{p,q}}
\end{align*}
for some constant $C>0$. Therefore, noting that
\[
 \frac{1-\mu(1-\lambda)}{\lambda}= \frac{1-\mu}{\lambda}+ \mu = \frac{1}{\theta} ,
\]
we  have
\[
\|f\|_{L^{p,q}}
  \les    \|f\|_{L^{1,\infty}}^{\frac{(1-\mu)(1-\lambda)}{1-\mu (1-\lambda)}} \|\Lambda^s f \|_{L^{p_2,\infty}}^{\frac{ \lambda}{1-\mu (1-\lambda)}}= \|f\|_{L^{1,\infty}}^{1-\theta}\|\Lambda^s f \|_{L^{p_2,\infty}}^\theta  . \qedhere
\]
\end{proof}

From Theorem \ref{cor2}, we  derive the following generalized Gagliardo-Nirenberg inequalities involving the $L^{1,\infty}$-quasinorm.

\begin{eg}
\begin{enumerate}[label=\textup{(\roman*)}]
\item If $1<p , p_2 < \infty$, then
$$
\|f\|_{L^{p,1}} \lesssim     \|f\|_{L^{1, \infty}}^{1 /p}\|\Lambda^{n/p_2} f\|_{L^{p_2 ,\infty}}^{1- 1/p} \quad\mbox{for all}\,\, f \in L^{1,\infty} \cap \dot H^{n/p_2}_{p_2 ,\infty}.
$$
\item If $s>0$ and $1< p<\infty$  satisfy $sp < n$, then
$$
\|f\|_{L^{p,1}}\les \|f\|_{L^{1,\infty}}^{1-\theta} \| \Lambda^s f\|_{L^{p,\infty}}^{\theta} \quad\mbox{for all}\,\, f \in L^{1,\infty} \cap \dot{H}^{s}_{p,\infty} ,
$$
where $0< \theta <1$ is defined by $\theta s/n = (1-\theta )(1-1/p)$.
\item If $s>0$ and $1< p<\infty$ satisfy $sp>n$, then
$$
\|f\|_{L^\infty}\les \|f\|_{L^{1,\infty}}^{1-\theta} \| \Lambda^s f\|_{L^{p,\infty}}^{\theta} \quad\mbox{for all}\,\, f \in L^{1,\infty} \cap H^{s}_{p,\infty},
$$
where $0< \theta <1$ is defined by $\theta (s/n -1/p) = 1-\theta $.
\end{enumerate}
\end{eg}

\section*{Appendix}

\subsection*{A.1. Proof of Theorem \ref{lem-itp-inhomf}}
Adapting the arguments in \cite{Bui}, we  prove Theorem \ref{lem-itp-inhomf} only for the  homogeneous case. The inhomogeneous case can be proved by the same argument.

The following maximal inequality is due to Fefferman and Stein \cite{FS}.

\begin{thm}\label{FS}
Let $1<p<\infty$ and $1<r\le\infty$. Then for every sequence $\{f_j\}_{j\in\Gamma}$ in $L^{p}(l^r)$,
$$\| \{Mf_j\}_{j\in\Gamma}\|_{L^{p}(l^r)} \les \|\{f_j\}_{j\in\Gamma}\|_{L^{p}(l^r)},$$
where $Mg$ denotes the Hardy-Littlewood maximal function of a function $g$ on $\rn$.
\end{thm}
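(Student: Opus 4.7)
The plan is to establish the Fefferman-Stein inequality through Marcinkiewicz interpolation applied to the sublinear operator $T\{f_j\}(x):=\|\{Mf_j(x)\}_{j\in\Gamma}\|_{l^r}$. The trivial endpoint is $r=\infty$: since $Mf_j(x)\le M(\sup_k|f_k|)(x)$ pointwise, the bound reduces to the scalar Hardy-Littlewood maximal theorem $\|M(\sup_k|f_k|)\|_{L^p}\les\|\sup_k|f_k|\|_{L^p}$, valid for $1<p<\infty$. From now on I assume $1<r<\infty$. The diagonal case $p=r$ follows from Tonelli's theorem combined with the scalar $L^r$-boundedness of $M$, since $\|T\{f_j\}\|_{L^r}^r=\sum_j\|Mf_j\|_{L^r}^r\les\sum_j\|f_j\|_{L^r}^r=\|\{f_j\}\|_{L^r(l^r)}^r$.

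To treat $1<p<r$, the key step is the vector-valued weak-type $(1,1)$ estimate
\[
|\{x:T\{f_j\}(x)>\lambda\}|\les\lambda^{-1}\|\{f_j\}\|_{L^1(l^r)},
\]
which I plan to prove by applying a Calder\'on-Zygmund decomposition at height $\lambda$ to the \emph{scalar} envelope $F(x):=\|\{f_j(x)\}\|_{l^r}$, producing pairwise disjoint cubes $\{Q_k\}$ with $\sum_k|Q_k|\les\lambda^{-1}\|F\|_{L^1}$. I would split each coordinate as $f_j=g_j+b_j$, where $g_j$ equals $f_j$ off $\bigcup Q_k$ and equals the $Q_k$-average of $f_j$ on each $Q_k$. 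Then $\|\{g_j\}\|_{L^\infty(l^r)}\les\lambda$ (using Jensen's inequality for the convex function $t\mapsto t^r$ together with the CZ stopping condition) and $\|\{g_j\}\|_{L^r(l^r)}^r\les\lambda^{r-1}\|F\|_{L^1}$, so the contribution of the good part is controlled by the strong $(r,r)$ bound plus Chebyshev. For the bad part, the doubled cubes $\bigcup Q_k^*$ already have measure $\les\lambda^{-1}\|F\|_{L^1}$, and off this union one bounds $Mb_j(x)$ by the standard CZ tail estimate and takes the $l^r$-norm in $j$ via Minkowski's inequality. Marcinkiewicz interpolation between this weak-$(1,1)$ bound and the strong-$(r,r)$ bound delivers $L^p(l^r)$ boundedness for all $1<p<r$.

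For the complementary range $r<p<\infty$, I would use a duality-based argument. Writing $\|T\{f_j\}\|_{L^p}^r=\|\sum_j|Mf_j|^r\|_{L^{p/r}}$ and pairing with a nonnegative $h\in L^{(p/r)'}$ of unit norm realizing the $L^{p/r}$-norm, the task reduces to bounding $\sum_j\int(Mf_j)^r h\,dx$. Using the scalar estimate $\int(Mf_j)^r h\les\int|f_j|^r Mh\,dx$ (which holds for $r>1$ via a good-$\lambda$ inequality, or by a direct pointwise argument for the adjoint of $M$ against a nonnegative weight), summing in $j$, and applying H\"older together with the scalar $L^{(p/r)'}$-boundedness of $M$ (available because $(p/r)'>1$) closes the estimate.

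The main obstacle is the vector-valued weak-$(1,1)$ step: the Calder\'on-Zygmund decomposition cannot be applied to each $f_j$ separately but must be applied to the scalar envelope $F$, and the off-diagonal decay for $Mb_j(x)$ must be combined with Minkowski's inequality in $l^r$ so that the $j$-sum of the averages $\tfrac{1}{|Q_k|}\int_{Q_k}|f_j|$ collapses to $\tfrac{1}{|Q_k|}\int_{Q_k}F$, which is precisely the quantity controlled by the CZ selection. Once this coupling between the $l^r$-structure and the scalar CZ machinery is handled, the remainder of the argument is standard Calder\'on-Zygmund real-variable theory.
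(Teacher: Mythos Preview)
The paper does not prove this statement at all: it is quoted as a classical result of Fefferman and Stein, with a citation to \cite{FS} and no argument given. Your plan is precisely the standard proof from that reference (as presented, e.g., in Grafakos \cite{Gra2}): the diagonal case $p=r$ by Tonelli, the range $r<p<\infty$ by duality and the Fefferman--Stein weighted inequality $\int (Mf)^r w\,dx\lesssim\int |f|^r Mw\,dx$, and the range $1<p<r$ by a vector-valued weak-$(1,1)$ estimate obtained from a Calder\'on--Zygmund decomposition of the scalar envelope $F=\|\{f_j\}\|_{l^r}$, followed by Marcinkiewicz interpolation.

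One small caution: your phrase ``standard CZ tail estimate'' for the bad part is slightly misleading, since the Hardy--Littlewood maximal function is not a singular integral and there is no cancellation of $b_j$ to exploit. The correct handling is that for $x\notin\bigcup Q_k^*$ one shows $Mb_j(x)\lesssim M\beta_j(x)$, where $\beta_j=\sum_k\bigl(\tfrac{1}{|Q_k|}\int_{Q_k}|b_j|\bigr)\chi_{Q_k}$; since $\|\{\beta_j(y)\}\|_{l^r}\lesssim\lambda$ pointwise (by the Minkowski step you identified) and $\beta_j$ is supported on a set of measure $\lesssim\lambda^{-1}\|F\|_{L^1}$, the already-established diagonal $(r,r)$ bound plus Chebyshev closes the estimate. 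With this adjustment the plan is complete and correct.
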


The quasi-norm $\|\cdot\|_{\hsf{s}{p}{q}{r}}$ of $\hsf{s}{p}{q}{r}$ was defined in terms of a sequence $\{\vp_j\}_{j\in\mathbb{Z}}$ of functions in  $C_c^\infty(\rn)$ satisfying \eqref{cond1-hom}, \eqref{cond2-hom}, and \eqref{cond3-hom}.
But if $\psi \in C_c ^\infty(\rn)$ satisfies $\supp \psi \subset \{1/2 \le |\xi|\le 2\}$ and $\psi >0$ on $\{3/5 \le |\xi|\le 5/3\}$, then the quasi-norm   $\|\cdot\|_{\hsf{s}{p}{q}{r}}$ is equivalent to  the quasi-norm corresponding to $\{\psi_j\}_{j\in\mathbb{Z}}$, where $\psi_j(\xi)=\psi(2^{-j}\xi)$.
The following is taken from \cite[Lemma 6.9]{Fra1} (see also \cite[Exercise 1.1.5]{Gra1}).

\begin{lem}\label{lem-function}
Suppose that  $\psi \in C_c ^\infty(\rn)$ satisfies $\supp \psi \subset \{1/2 \le |\xi| \le 2 \}$ and $\psi >0$ on $\{3/5 \le |\xi| \le  5/3\}$. Then there exists $\vp\in C_c^\infty(\rn)$ such that $\supp \vp \subset \{1/2 \le  |\xi| \le 2\}$, $\vp>0$ on $\{ 3/5 \le |\xi| \le  5/3 \}$, and
$$
\sum_{j\in\mathbb{Z}} \vp( 2^{-j} \xi)\psi( 2^{-j}\xi)=1\quad\text{on }\,\rn\backslash\{0\}.
$$
\end{lem}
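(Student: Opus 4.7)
\textbf{Proof proposal for Lemma \ref{lem-function}.} The plan is to construct $\vp$ explicitly as a normalized multiple of $\psi$, using an auxiliary dyadically periodic function. Concretely, I would define
\[
\Psi(\xi) = \sum_{j \in \mathbb{Z}} \psi(2^{-j}\xi)^2 \quad (\xi \in \mathbb{R}^n \setminus \{0\})
\]
and then set
\[
\vp(\xi) = \frac{\psi(\xi)}{\Psi(\xi)} \quad\text{where } \psi(\xi) \neq 0,
\]
extended by zero elsewhere. The identity $\sum_j \vp(2^{-j}\xi)\psi(2^{-j}\xi) = 1$ will then follow by cancellation: each summand is $\psi(2^{-j}\xi)^2/\Psi(2^{-j}\xi)$, and the denominator is dyadically invariant, so the sum telescopes to $\Psi(\xi)/\Psi(\xi) = 1$.

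The first step is to verify that $\Psi$ is a well-defined, smooth, strictly positive function on $\mathbb{R}^n \setminus \{0\}$. Locally finiteness is immediate from $\supp \psi \subset \{1/2 \le |\xi| \le 2\}$: for any fixed $\xi \neq 0$, at most three consecutive values of $j$ can satisfy $2^{j-1} \le |\xi| \le 2^{j+1}$, so $\Psi$ is locally a finite sum of smooth functions. For positivity, I would show that for each $\xi \neq 0$ there is some $j$ with $2^{-j}|\xi| \in [3/5, 5/3]$: writing $|\xi| = 2^k t$ with $t \in [1,2)$, the choice $j = k$ works when $t \le 5/3$, and $j = k+1$ works when $t > 5/3$ (since then $t/2 \in (5/6, 1] \subset [3/5, 5/3]$). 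On this set $\psi > 0$ by hypothesis, giving $\Psi(\xi) > 0$. The key scaling identity
\[
\Psi(2\xi) = \sum_{j \in \mathbb{Z}} \psi(2^{-j+1}\xi)^2 = \sum_{j \in \mathbb{Z}} \psi(2^{-j}\xi)^2 = \Psi(\xi)
\]
follows from reindexing.

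Next I would check that $\vp$ has the desired regularity and support properties. Since $\Psi$ is smooth and nonvanishing on the annulus $\{1/2 \le |\xi| \le 2\}$ (and in particular on $\supp \psi$), the ratio $\psi/\Psi$ is $C^\infty$ there; extending by zero outside $\supp \psi$ yields $\vp \in C_c^\infty(\mathbb{R}^n)$ with $\supp \vp \subset \supp \psi \subset \{1/2 \le |\xi| \le 2\}$. Positivity of $\vp$ on $\{3/5 \le |\xi| \le 5/3\}$ is inherited directly from positivity of $\psi$ and $\Psi$ there. Finally the partition-of-unity identity is a one-line computation:
\[
\sum_{j \in \mathbb{Z}} \vp(2^{-j}\xi)\psi(2^{-j}\xi) = \sum_{j \in \mathbb{Z}} \frac{\psi(2^{-j}\xi)^2}{\Psi(2^{-j}\xi)} = \frac{1}{\Psi(\xi)} \sum_{j \in \mathbb{Z}} \psi(2^{-j}\xi)^2 = 1,
\]
where in the middle equality I used $\Psi(2^{-j}\xi) = \Psi(\xi)$, which follows by iterating the scaling identity.

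There is no serious obstacle; the only mildly delicate point is the smoothness of $\vp$, which reduces to the fact that $\Psi$ is bounded below by a positive constant on a neighborhood of $\supp \psi$ (a compact set where $\Psi$ is continuous and strictly positive). Everything else is a direct verification from the definitions.
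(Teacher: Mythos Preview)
Your proposal is correct; the construction $\vp=\psi/\Psi$ with $\Psi(\xi)=\sum_{j\in\mathbb Z}\psi(2^{-j}\xi)^2$ is exactly the standard argument. The paper does not supply its own proof of this lemma but simply cites \cite[Lemma~6.9]{Fra1} and \cite[Exercise~1.1.5]{Gra1}, where precisely this construction appears, so there is nothing further to compare.
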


\begin{proof}[Proof of Theorem \ref{lem-itp-inhomf} for the homogeneous case]
Let $\psi$ and $\vp$ be two functions in $C_c^\infty(\rn)$ satisfying all the properties of Lemma \ref{lem-function}.
Define $\psi_j(\xi)= \psi(2^{-j}\xi)$ and $\vp_j(\xi)= \vp(2^{-j}\xi)$ for $j\in\mathbb{Z}$.

It is quite easy to show that
$$
K(t, \{2^{js}\Delta_j^\vp f\}_{j\in\mathbb{Z}}; L^{p_1}(l^r), L^{p_2}(l^r)) \les K(t, f ; \dot F ^{s,r}_{p_1}, \dot F^{s,r}_{p_2}) .
$$
Hence to complete the proof, it suffices to show that
$$
K(t, f ; \dot F ^{s,r}_{p_1}, \dot F^{s,r}_{p_2}) \les K (t, \{2^{js}\Delta_j^\vp f\}_{j\in\mathbb{Z}}; L^{p_1}(l^r), L^{p_2}(l^r))\quad\text{for  $1<r\le\infty$}.
$$
Suppose that $1<r \le \infty$ and  $f\in \hsf{s}{p}{q}{r}$. Then by Lemma \ref{interpolation-Lp},
$$
\{ 2^{js} \Delta_j^\vp f\}_{j\in\mathbb{Z}} \in L^{p,q}(l^r) = (L^{p_1}(l^r), L^{p_2}(l^r))_{\theta,q}.
$$
Let $\{g_j\}_{j\in\mathbb{Z}}\in L^{p_1}(l^r)$ and $\{h_j\}_{j\in\mathbb{Z}} \in L^{p_2}(l^r)$ be chosen so that $2^{js}\Delta_j^\vp f = g_j + h_j$ for each $j\in\mathbb{Z}$. Define
$$
f_1 = \sum_{j\in\mathbb{Z}} 2^{-js} \psi_j\ift \ast g_j \quad\text{and}\quad f_2= \sum_{j\in\mathbb{Z}} 2^{-js}\psi_j\ift \ast h_j.
$$
Then since $\sum_{j\in\mathbb{Z}} \vp_j\psi_j =1$ on $\rn\backslash\{0\}$,
$$
f = \sum_{j\in\mathbb{Z}} ( \psi_j \vp_j \hat{f}\,)\ift = \sum_{j\in\mathbb{Z}} \psi_j\ift \ast \Delta_j^\vp f  = f_1 + f_2.
$$
Note that
$$
\Delta_j^\vp f_1 = \sum_{k\in\mathbb{Z}} 2^{-ks}( \vp_j \psi_k \hat{g_k})\ift  = \sum_{l=-1}^{1} 2^{-(j-l)s}  (\vp_j \psi_{j-l})\ift \ast g_{j-l} .
$$
Moreover, if $j\in\mathbb{Z}$ and $-1 \le l \le 1$, then
$$
(\vp_j \psi_{j-l})\ift (x) = 2^{jn}[\vp(\cdot)\psi(2^l \cdot)]\ift (2^j x)
$$
so that
\begin{align*}
\left| (\vp_j \psi_{j-l})\ift \ast g_{j-l}(x) \right|  &\le Mg_{j-l}(x)\int_{\mathbb{R}^n} \left|K_l(y)\right|dy \les Mg_{j-l}(x),
\end{align*}
where $K_l$ is a decreasing majorant of $\vp(\cdot)\psi(2^l\cdot)$
(see \cite[Corollary 2.1.12]{Gra2} for more details).
Hence applying Theorem \ref{FS}, we get
\begin{align*}
\left\| \left\{2^{js}\Delta_j^\vp f_1 \right\}_{j\in\mathbb{Z}}\right\|_{L^{p_1}(l^r)} &\les \sum_{l=-1}^{1} \left\| \left\{M{g}_{j-l}\right\}_{j\in\mathbb{Z}}\right\|_{L^{p_1}(l^r)} \les \left\| \left\{g_j\right\}_{j\in\mathbb{Z}}\right\|_{L^{p_1}(l^r)}.
\end{align*}
Similarly,
$$
\left \|  \{ 2^{js}\Delta_j^\vp f_2  \}_{j\in\mathbb{Z}} \right\|_{L^{p_2}(l^r)}  \les \left \| \{ h_j\} _{j\in\mathbb{Z}}\right \|_{L^{p_2}(l^r)}.
$$
Combining all the estimates, we get
\begin{align*}
 K(t,f;\dot F ^{s,r}_{p_1}, \dot F^{s,r}_{p_2}) &\le \| f_1 \|_{\dot F^{s,r}_{p_1}} + t \|f_2 \|_{\dot F^{s,r}_{p_2}} \\
 	&\les \| \{ g_j\} _{j\in\mathbb{Z}} \|_{L^{p_1}(l^r)} +  t \|  \{h_j\} _{j\in\mathbb{Z}} \|_{L^{p_2}(l^r)}.
\end{align*}
By the arbitrariness of $\{ g_j\} _{j\in\mathbb{Z}}$ and $ \{h_j\} _{j\in\mathbb{Z}}$, we conclude that
$$
K(t, f ; \dot F ^{s,r}_{p_1}, \dot F^{s,r}_{p_2}) \les K(t, \{2^{js}\Delta_j^\vp f\}_{j\in\mathbb{Z}}; L^{p_1}(l^r), L^{p_2}(l^r)).
$$
This completes the proof of Theorem \ref{lem-itp-inhomf}.
\end{proof}

\subsection*{A.2. Proof of Theorem  \ref{prop-relation}}

\begin{proof}[Proof of Theorem \ref{prop-relation}]
(i) It  was proved in  \cite[Theorem 2.5.6]{Tri1} and \cite[Theorem 6.1.2]{Gra2}, for example, that $ F_{p}^{0,2} = \dot{F}_{p}^{0,2}=L^{p}$ for $1<p<\infty$. Hence it follows from  Lemma \ref{interpolation-Lp} and Theorem  \ref{lem-itp-inhomf}  that $\ihsf{0}{p}{q}{2} = \hsf{0}{p}{q}{2}=L^{p,q}$  for $1<p<\infty$ and $1 \le q \le \infty$.

(ii)  We choose  $\psi  \in C_c^\infty(\mathbb{R}^n)$   such that $\psi=1$ on $\{|\xi|\le1\}$ and $\psi=0$ on $\{|\xi|\ge3/2\}$. For each $j\in\mathbb{Z}$, define $\vp_j (\cdot) = \psi (2^{-j}\cdot)-\psi (2^{-j+1}\cdot)$ on $\mathbb{R}^n.$
Then   $\vp=\{\vp_j\}_{j\in\mathbb{Z}}$  is  a sequence in $C_c^\infty(\mathbb{R}^n)$ satisfying \eqref{cond1-hom}, \eqref{cond2-hom}, and \eqref{cond3-hom}.
Next, we define $\eta_0 = \psi$ and $\eta_j = \vp_j$ for $j \ge 1$. Then  $\eta=\{\eta_j\}_{j\ge0}$ satisfies \eqref{cond1-inhom}, \eqref{cond2-inhom}, and \eqref{cond3-inhom}.
Note that $\vp_j = \vp_j \eta_0$ for $j <0 $. Moreover, since
$\vp_j\ift(x)=  2^{jn}\vp_0\ift (2^jx)$,
it follows that
\[
\| \vp_j\ift\|_{L^{1}}  \les  \int_{\mathbb{R}^n} |\psi \ift(x)| dx \les 1 \quad\text{for }j\in \mathbb{Z}.
\]
On the other hand, since $ \Delta_j^\vp f   = \vp_j\ift \ast f$, it follows from Young's convolution inequality that each $\Delta_j^\vp$ is a bounded linear operator on $L^t$ for every $1 \le t \le \infty$. Hence by real interpolation, we deduce that if $1<p<\infty$   or if $1\le p=q \le \infty$, then
$$
\| \Delta_j^\vp f\|_{L^{p,q}}   \les \|\vp_j\ift \|_{L^1} \|f\|_{L^{p,q}} \les \|f\|_{L^{p,q}} \quad\mbox{for all}\,\, j \in \mathbb{Z}.
$$
For $j < 0$, we also have
\begin{equation*}
\|\Delta_j^\vp f \|_{L^{p,q}} = \|  (\vp_j \eta_0 \hat{f}\,)\ift  \|_{L^{p,q}} \les \|(\eta_0 \hat{f}\,)\ift\|_{L^{p,q}} =  \| \Delta_0^\eta f \|_{L^{p,q}}  \les \|f\|_{L^{p,q}}.
\end{equation*}

Suppose now that $1<p<\infty$   or if $  p=q =1$.

We first  show that $\dot{F}^{0,1}_{p,q} \hookrightarrow L^{p,q}$. Suppose that $f \in \dot{F}^{0,1}_{p,q}$. Then since  $\sum_{j \in \mathbb{Z}}|\Delta_j^\vp f | \in L^{p,q}$, the series $\sum_{j \in \mathbb{Z}} \Delta_j^\vp f$ converges in $L^{p,q}$ to a function, denoted by  $F$. It is trivial that  $\|F\|_{L^{p,q}} \le   \|f\|_{\dot{F}^{0,1}_{p,q}}$.
Moreover, since $L^{p,q} \subset L^1 + L^\infty \subset \scc'$ and   $\sum_{j \in \mathbb{Z}} \Delta_j^\vp f     = f$ in $\scc_0'$, it follows that $F$ is a unique extension of $f$ in $L^{p,q}$. This proves that $\dot{F}^{0,1}_{p,q} \hookrightarrow L^{p,q}$.   The embedding $F^{0,1}_{p,q} \hookrightarrow L^{p,q}$ can be proved similarly.

To show that $\ihsf{s}{p}{q}{r}=L^{p,q}\cap \hsf{s}{p}{q}{r}$, suppose that $f \in \ihsf{s}{p}{q}{r}$. Then since $s>0$ and $L^{p,q} (l^r )  \hookrightarrow L^{p,q} (l^\infty)$, we have
$$
\|f\|_{L^{p,q}} \les\left\|\sum_{j=0}^\infty  |\Delta_j^\eta f |  \right\|_{L^{p,q}}  \les \left\| \sup_{j\in\mathbb{N}_0} 2^{js}| \Delta_j^\eta f | \right\|_{L^{p,q}} \les  \|f\|_{  F^{s,r}_{p,q}} .
$$
Moreover, since $L^{p,q} (l^1)  \hookrightarrow L^{p,q} (l^r)$, $L^{p,q}$ is normable, and $s>0$, it follows that
\begin{align*}
\|f\|_{\hsf{s}{p}{q}{r}}	&\sim  \left \| \{ 2^{js} \lpo{\vp}{f} \}_{j\ge0} \right\|_{L^{p,q} (l^r)} +  \left \| \{ 2^{js} \lpo{\vp}{f}\}_{j<0} \right\|_{L^{p,q} (l^r)}\\
	& \les  \left \| \{ 2^{js} \lpo{\vp}{f} \}_{j\ge0} \right\|_{L^{p,q} (l^r)} + \sum_{j<0} 2^{js} \| \lpo{\vp}{f} \|_{L^{p,q}}  \\
	& \les  \left \| \{ 2^{js} \Delta_j^\eta f \}_{j \ge 0} \right\|_{L^{p,q} (l^r)} +  \|  f  \|_{L^{p,q}}
	   \les   \|f\|_{\ihsf{s}{p}{q}{r}}.
\end{align*}
Conversely, if $f \in L^{p,q}\cap \hsf{s}{p}{q}{r}$, then
\[
\|f\|_{\ihsf{s}{p}{q}{r}}	 \sim  \left \| \{ 2^{js} \Delta_j^\eta f \}_{j > 0} \right\|_{L^{p,q} (l^r)} +  \| \Delta_0^\eta f  \|_{L^{p,q}}  \les \|f\|_{\hsf{s}{p}{q}{r}}    +  \|  f  \|_{L^{p,q}} .
\]

(iii) Suppose that    $1<p<\infty$ or $1 \le p=q \le \infty$. Then for all $f\in L^{p,q}$, we have
\begin{align*}
\|f\|_{B^{0,\infty}_{p,q}}+ \|f\|_{\hsb{0}{p}{q}{\infty}} = \sup_{j\in\mathbb{N}_0} \|\Delta_j^\eta f\|_{L^{p,q}} +	 \sup_{j\in\mathbb{Z}} \|\Delta_j^\psi f\|_{L^{p,q}}  \les \|f\|_{L^{p,q}} ,
\end{align*}
which implies that $ L^{p,q} \hookrightarrow B^{0,\infty}_{p,q} $ and $L^{p,q} \hookrightarrow \dot B^{0,\infty}_{p,q}$.  Suppose that $f \in \dot{B}^{0,1}_{p,q}$. Then since  $\sum_{j \in \mathbb{Z}} \|\Delta_j^\vp f \|_{L^{p,q}}< \infty$ and   $L^{p,q}$ is normable,  the series $\sum_{j \in \mathbb{Z}} \Delta_j^\vp f$ converges in $L^{p,q}$ to a function   $F$. Moreover,  $F$ is an extension of $f$ in $L^{p,q}$ satisfying $\|F\|_{L^{p,q}} \le   \|f\|_{\dot{B}^{0,1}_{p,q}}$.
It is also easy to show  that    $\|g\|_{L^{p,q}} \les \|g\|_{  B^{0,1}_{p,q}}$ for all $g\in   B^{0,1}_{p,q}$.
Finally, the proof of  (ii) can be easily adapted to prove that $\ihsb{s}{p}{q}{r}=L^{p,q}\cap \hsb{s}{p}{q}{r}$.
\end{proof}

\subsection*{A.3. Proofs of Theorems  \ref{ebd-hom-TLL} and  \ref{ebd-hom-B}}

\begin{proof}[Proof of Theorem   \ref{ebd-hom-TLL}]
We first prove sufficiency of the conditions (i) and (ii) for the embedding $\hsf{s_1}{p_1}{q_1}{r_1} \hookrightarrow \hsf{s_2}{p_2}{q_2}{r_2}$. Sufficiency of (i) immediately follows from the embedding result \eqref{eq-ebd}.
On the other hand, it follows from Theorems \ref{thm-ebd-FB-2} that if  $s_1-s_2=n/p_1-n/p_2>0$, then $\dot F^{s_1,\infty}_{p_1}\hookrightarrow \dot F^{s_2,1}_{p_2}$. Hence by real interpolation (Theorem \ref{lem-itp-inhomf}), we deduce that if (ii) holds, then
$\hsf{s_1}{p_1}{q_1}{r_1} \hookrightarrow \hsf{s_1}{p_1}{q_2}{\infty} \hookrightarrow \hsf{s_2}{p_2}{q_2}{1} \hookrightarrow \hsf{s_2}{p_2}{q_2}{r_2}.
$

To prove the necessity, we assume that $\hsf{s_1}{p_1}{q_1}{r_1} \hookrightarrow \hsf{s_2}{p_2}{q_2}{r_2}$.
First of all, by a dilation argument, we easily obtain
$$
s_1- \frac{n}{p_1} = s_2 - \frac{n}{p_2}.
$$
Next, let $0<\varepsilon<1/10$ be fixed and choose $\vp \in C_c^\infty (\mathbb{R}^n)$ such that $\supp \vp \subset \{1/2+\varepsilon \le  |\xi| \le 2-\varepsilon\}$, $\vp>0$ on $\{3/5\le |\xi|\le 5/3\}$, and $\vp=1$ on $\{1-\varepsilon \le |\xi|\le 1+\varepsilon\}$. Define $\vp_j(\xi)=\vp(2^{-j}\xi)$ for each $j\in\mathbb{Z}$. Then for $s\in\mathbb{R}$, $1\le p <\infty$, and $1\le q\le\infty$,
$$
\|f\|_{\dot F^{s,r}_{p,q}} \sim \left\| \left\{ 2^{js}\Delta_j^\vp f \right\}_{j\in\mathbb{Z}}\right\|_{L^{p,q}(l^r)}.
$$
Choose any $\psi\in C_c^\infty(\mathbb{R}^n)$ with $\supp \psi \subset \{|\xi|\le\varepsilon\}.$ Then for all $\xi\in\rn$,
\begin{equation*}
\vp_j(\xi)\psi(\xi-2^{k}e_1)=
\begin{cases}
 \psi(\xi-2^j e_1)&\text{if}\,\, j=k\ge 1,\\
 	\,\,0\quad&\text{if} \,\,k\ge1, j\neq k.\\
 \end{cases}
\end{equation*}
Therefore,  if $f\in \scc$ is given by
$$
 \hat{f}(x)=\sum_{k=1}^N a_k \psi(\xi -2^k e_1)
 $$
 for some complex numbers $a_1,\dots, a_N$, then
 \begin{equation*}
\|f\|_{\dot F^{s,r}_{p,q}} \sim \| \{ 2^{js}a_j\}_{j=1}^N \|_{l^r}\|\psi \ift\|_{L^{p,q}}.
\end{equation*}
Since $\hsf{s_1}{p_1}{q_1}{r_1} \hookrightarrow \hsf{s_2}{p_2}{q_2}{r_2}$, we have
$$
\| \{ 2^{js_2}a_j\}_{j=1}^N \|_{l^{r_2}}\|\psi \ift\|_{L^{p_2,q_2}} \les \| \{ 2^{js_1}a_j\}_{j=1}^N \|_{l^{r_1}}\|\psi \ift\|_{L^{p_1,q_1}}
$$
for any   $a_1,\dots, a_N$. Hence it follows that (a) $s_1\ge s_2$  and (b) if $s_1 = s_2$, then $r_1 \le r_2$.

Now, fixing any number $\alpha$ with
$0< \alpha < n \left(1 -1/ p_1 \right)$,
we define
$$
\overline{p}=\frac{np_1}{n+\alpha p_1}.
$$
Then since $1< \overline{p}< p_1$ and $ \alpha= n/ \overline{p} - n/p_1 >0$,
it follows from the sufficiency part of the theorem that
\begin{equation*}\label{proof-lem-ebd}
\hsf{s_1+\alpha}{\overline{p}}{q_1}{\infty} \hookrightarrow \hsf{s_1}{p_1}{q_1}{r_1}\hookrightarrow \hsf{s_2}{p_2}{q_2}{r_1} \hookrightarrow \hsf{s_2}{p_2}{q_2}{\infty}.
\end{equation*}
Recall from the proof of Theorem \ref{SFE2-inhom} that
$\Lambda^\sigma$ maps $\hsf{s}{p}{q}{r}$ isomorphically onto $\hsf{s-\sigma}{p}{q}{r}$ for $s , \sigma \in \mathbb R$,  $1<r\le\infty$, $1 < p< \infty$, and $1 \le q \le \infty$.  Hence by  Theorem   \ref{prop-relation},
$$
\ihsf{s_1+\alpha+\beta}{\overline{p}}{q_1}{\infty} \hookrightarrow \hsf{s_1+\alpha+\beta}{\overline{p}}{q_1}{\infty} \hookrightarrow \hsf{s_2+\beta}{p_2}{q_2}{\infty}
$$
for every $\beta \in {\mathbb R}$ with $s_1 +\alpha +\beta >0 $.
Choose   any $\beta$ with  $\beta >-s_2$. Then since
$$
s_1+ \alpha+\beta   > s_1 +\alpha -s_2 =  \frac{n}{\overline{p}}-\frac{n}{p_2}   >0  \quad\mbox{and}\quad s_2 +\beta >0,
$$
it follows from Theorems \ref{ebd-inhom-TLL} and \ref{prop-relation} that
\begin{align*}
\ihsf{s_1+\alpha+\beta}{\overline{p}}{q_1}{\infty}  \hookrightarrow  \hsf{s_2+\beta}{p_2}{q_2}{\infty} \cap \ihsf{0}{p_2}{q_2}{2}     = \ihsf{s_2+\beta}{p_2}{q_2}{\infty}.
\end{align*}
Therefore, by the necessity part of Theorem \ref{ebd-inhom-TLL}, we deduce that $q_1 \le q_2$.
\end{proof}

To prove Theorem  \ref{ebd-hom-B}, we need the following form of the Bernstein inequality.

\begin{lem}\label{lem-hom-B}
Suppose that  $K \subset{\mathbb R}^n$ is compact, $1 \le p <  \infty$, and  $ 1 \le q \le \infty$. Then
$$
\|f\|_{L^\infty} \les d^{n/p} \|f\|_{L^{p,q}}
$$
for all $f\in L^{p,q}\cap \scc'$ with $\supp \hat{f} \subset K$, where $d$ is the diameter of $K$.
\end{lem}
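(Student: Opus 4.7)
The strategy is to realize $f$ as the convolution $\check\psi\ast f$ with a Schwartz kernel whose Fourier transform equals $1$ on $K$, and then bound the resulting integral pointwise. I would begin by normalizing: the modulation $f(x)\mapsto e^{-i\xi_0\cdot x}f(x)$ for some $\xi_0\in K$ preserves $|f|$ (hence every Lorentz norm) while translating $\supp\hat f$ into $B(0,d)$; the dilation $f(x)\mapsto f(x/d)$ then rescales $\|f\|_{L^{p,q}}$ by $d^{n/p}$ and scales $\supp\hat f$ into $B(0,1)$. Hence it suffices to prove $\|f\|_{L^\infty}\les\|f\|_{L^{p,q}}$ under the normalization $\supp\hat f\subset B(0,1)$.

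Fix $\psi\in C_c^\infty(\mathbb R^n)$ with $\psi\equiv 1$ on $B(0,1)$; then $\psi\hat f=\hat f$, so $f=\check\psi\ast f$ in $\scc'$, and this convolution defines a smooth function of at most polynomial growth since $\check\psi\in\scc$ and $f\in\scc'$. For $1<p<\infty$ the pointwise bound
\[
|f(x)|\le\int|\check\psi(x-y)||f(y)|\,dy\les\|\check\psi\|_{L^{p',q'}}\|f\|_{L^{p,q}}
\]
follows from the H\"older inequality in Lorentz spaces (Lemma \ref{Holder-Lorentz}), applied with $1/p+1/p'=1$ and $q'\ge1$ chosen so that $1/q+1/q'\ge 1$; since $\check\psi\in\scc$ and $p'\in(1,\infty)$, its Lorentz norm is finite, which settles this range.

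At the endpoint $p=1$ the only H\"older-admissible pair is $(p',q')=(\infty,\infty)$, forcing $q=1$; this recovers the classical Bernstein $\|f\|_{L^\infty}\le\|\check\psi\|_{L^\infty}\|f\|_{L^1}$. For the remaining range $p=1$, $q>1$, I would first use the embedding $L^{1,q}\hookrightarrow L^{1,\infty}$ to reduce to $q=\infty$, and then exploit the band-limited structure through the Bernstein inequality for derivatives. Granting for the moment that $\|f\|_{L^\infty}<\infty$, the same convolution proof applied to $\nabla\check\psi$ gives $\|\nabla f\|_{L^\infty}\les\|f\|_{L^\infty}$, so $f$ is Lipschitz with constant $\les\|f\|_{L^\infty}$; consequently $|f|\ge\|f\|_{L^\infty}/4$ on a ball of radius $\sim 1$ around a near-maximizer of $|f|$, giving $\mu_f(\|f\|_{L^\infty}/4)\ge c_n$. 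Inserting this into the definition $\alpha\mu_f(\alpha)\le\|f\|_{L^{1,\infty}}$ yields $\|f\|_{L^\infty}\les\|f\|_{L^{1,\infty}}$, as required.

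The main obstacle is the a priori finiteness of $\|f\|_{L^\infty}$ in the endpoint case $p=1$, $q>1$, needed to legitimize the Lipschitz estimate above. I would address this by an approximation argument: apply the inequality to Schwartz approximants $f_\epsilon$ of $f$ having the same Fourier-support condition (for instance, obtained by spatially truncating $f$ and then projecting back onto the band-limited subspace), obtain uniform bounds $\|f_\epsilon\|_{L^\infty}\les\|f_\epsilon\|_{L^{1,q}}\les\|f\|_{L^{1,q}}$, and pass to the limit using that $\check\psi\ast f_\epsilon\to\check\psi\ast f=f$ pointwise.
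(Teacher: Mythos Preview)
Your argument for $1<p<\infty$ (direct H\"older with dual exponents $(p',q')$) and for $p=q=1$ is correct, and the Lipschitz/level-set argument for $p=1$, $q>1$ is sound once $\|f\|_{L^\infty}<\infty$ is known a priori. The one soft spot is the specific approximation you suggest: if you spatially truncate and then ``project back'' by convolving with $\check\psi$, you need $\|\check\psi\ast(\chi_\epsilon f)\|_{L^{1,q}}\les\|f\|_{L^{1,q}}$, but convolution with an $L^1$ kernel is \emph{not} bounded on $L^{1,q}$ for $q>1$, so this step does not go through as written. The fix is simply to choose the truncation so that no projection is needed: take $f_\delta(x)=\varphi(\delta x)f(x)$ with $\varphi\in\scc$, $\varphi(0)=1$, $\supp\hat\varphi\subset B(0,1)$. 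Then $\hat f_\delta$ is already supported in $B(0,2)$, $f_\delta\in\scc$ (since $f$ is smooth of polynomial growth by Paley--Wiener), and $|f_\delta|\le\|\varphi\|_\infty|f|$ gives $\|f_\delta\|_{L^{1,q}}\les\|f\|_{L^{1,q}}$ immediately. With this regularization your scheme is complete.

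The paper's proof uses exactly this $f_\delta$ but avoids the case split altogether via a bootstrap. Writing $|f_\delta|=|f_\delta|^{1/2}|f_\delta|^{1/2}$ inside the convolution and applying H\"older (Lemma~\ref{Holder-Lorentz}) with the triple $(\infty,\,2p,\,2p/(2p-1))$ yields
\[
\|f_\delta\|_{L^\infty}\les\|f_\delta\|_{L^\infty}^{1/2}\,\|\psi\|_{L^{2p/(2p-1),\,2q/(2q-1)}}\,\|f_\delta\|_{L^{p,q}}^{1/2},
\]
which one may divide through since $\|f_\delta\|_{L^\infty}<\infty$. The key observation is that $2p/(2p-1)\in(1,2]$ for every $p\ge1$, so the dual exponent never hits $\infty$ and the endpoint $p=1$ requires no separate treatment. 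Your route is more transparent for $p>1$ but costs an extra geometric argument at the endpoint; the paper trades a slightly less obvious H\"older splitting for a single proof valid across the full range.
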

\begin{proof}
By a simple scaling argument, we may assume that $d=1$. Suppose that $f \in L^{p,q}\cap \scc'$ and $\supp \hat{f} \subset K$. Then choosing $\psi \in \scc$ such that $\hat{\psi}=1$ on $\{\xi \in \rn: |\xi - \eta| \le 1 \text{ for some }\eta \in K\}$, we have $f =  f \ast \psi$ on $\rn$.
Choose a function $\vp\in\scc$ such that $\vp(0)=1$ and $\supp\hat{\vp}\subset\{|\xi|\le1\}$. For $0<\delta<1$, define
$$f_\delta(x)=\vp(\delta x)f(x).$$
Then by Lemma \ref{Holder-Lorentz},
\begin{align*}
|f_\delta(x)| & = \left| \int f_\delta(y) \psi (x-y)dy\right|  \les \| f_\delta\|_{L^{\infty}}^{1/2} \| \psi\|_{L^{2p/(2p-1),2q/(2q-1)}} \| f_\delta\|_{L^{p,q}}^{1/2},
\end{align*}
from which it follows that
\begin{equation*}
\| f_\delta\|_{L^{\infty}}  \les \|f_\delta\|_{L^{p,q}} \les \|f\|_{L^{p,q}}.
\end{equation*}
Letting $\delta \downarrow 0$, we complete the proof.
\end{proof}

\begin{proof}[Proof of Theorem  \ref{ebd-hom-B}]
It immediately follows from \eqref{eq-ebd} that $\hsb{s_1}{p_1}{q_1}{r_1} \hookrightarrow \hsb{s_2}{p_2}{q_2}{r_2} $ when (i) holds.
\noindent Suppose that (ii) holds and $f \in \hsb{s_1}{p_1}{q_1}{r_1}$. Then since the diameter of $\supp \widehat{\Delta_j^\vp f}$ is at most $2^{j+1}$, it follows from Lemma \ref{lem-hom-B} that
$$
\|\Delta_j^\vp f \|_{L^{\infty}} \les 2^{jn/p_1}\|\Delta_j^\vp f\|_{L^{p_1,q_1}} \quad\mbox{for}\,\, j\in\mathbb{Z}.
$$
Hence by Lemma \ref{interpolation-Lp},  we have
\begin{align*}
\|\Delta_j^\vp f \|_{L^{p_2,q_2}} & \les  \|\Delta_j^\vp f \|_{L^{p_1,q_1}}^{p_1/p_2} \|\Delta_j^\vp f \|_{L^{\infty}}^{1-p_1/p_2} \\
	&\les  2^{jn(1/p_1-1/p_2)}  \|\Delta_j^\vp f \|_{L^{p_1,q_1}}=  2^{j(s_1-s_2)}  \|\Delta_j^\vp f \|_{L^{p_1,q_1}},
\end{align*}
from which  it follows that  $\|f\|_{\hsb{s_2}{p_2}{q_2}{r_2}} \les \|f\|_{\hsb{s_1}{p_1}{q_1}{r_1}}$.
\end{proof}

\end{document}